\newcommand{\w}{\omega}
\newcommand{\eps}{\varepsilon}
\newcommand{\pa}{\partial}
\newcommand{\na}{\nabla}
\newcommand{\N}{\mathbb{N}}
\newcommand{\Z}{\mathbb{Z}}
\newcommand{\Sym}{\text{Sym}_{2\times 2}}
\newcommand{\ProL}{\mathbb{P}}
\newcommand{\R}{\mathbb{R}}
\newcommand{\T}{\mathbb{T}}
\newcommand{\Hp}{\mathcal{H}^p}
\newcommand{\fancyD}{\mathcal{D}}
\newcommand{\fancyF}{\mathcal{F}}
\newcommand{\fancyR}{\mathcal{R}}
\newcommand{\fancyS}{\mathcal{S}}
\def\norm#1{\left\vert \left\vert #1  \right\vert \right\vert }
\newcommand{\bpf}{\begin{proof}}
\newcommand{\epf}{\end{proof}}
\DeclareMathOperator{\divergence}{div}
\DeclareMathOperator{\curl}{curl}
\DeclareMathOperator{\supp}{supp}
\newcommand{\divinv}{\divergence^{-1}}
\newtheorem{thm}{Theorem}[section]
\newtheorem{prop}[thm]{Proposition}
\newtheorem{cor}[thm]{Corollary}
\newtheorem{lemma}[thm]{Lemma}
\newtheorem{defi}[thm]{Definition}
\newtheorem{rem}[thm]{Remark}
\title{Pathological solutions of Navier-Stokes equations on $\T^2$ with gradients in Hardy spaces}
\author[$a,b$]{Jan Burczak}
\author[$b$]{Antonio Hidalgo-Torn\'e}
\affil[$a$]{Institut f\"ur Mathematik, Universit\"at Leipzig}
\affil[$b$]{MPI f\"ur Mathematik in den Naturwissenschaften, Leipzig}
\begin{document}
	
\maketitle

\begin{abstract}
For an arbitrary smooth initial datum, we construct multiple nonzero solutions to the $2$d Navier-Stokes equations, with their gradients in the Hardy space $\mathcal{H}^p$ with any $p \in (0,1)$. Thus, in terms of the path space $C(\mathcal{H}^p)$ for vorticity, $p=1$ is the threshold value distinguishing between non-uniqueness and uniqueness regimes. In order to obtain our result, we develop the needed theory of Hardy spaces on periodic domains. 
\end{abstract}

\section{Introduction}
Denote by $\T$ the periodic interval $[-\frac{1}{2},\frac{1}{2}]$. Consider the incompressible Navier-Stokes equations (NSE) on $\T^2  \times (0,T)$:
\begin{equation}\label{eq:NSE}
\begin{split}
\pa_t u+\divergence(u\otimes u)- \Delta u +\na \pi&=0, \\
\divergence u&=0.
\end{split}
\end{equation}
By $\Hp$ we denote the real Hardy space on $\T^2$, see  Appendix for the definition\footnote{Our definition of Hardy space on $\T^n$ is a natural modification of the definition for $\R^n$.}. The vorticity of $u$ is denoted by $\omega$, i.e.\ $\curl u =:\omega$.
\begin{defi}
\emph{A very weak solution to \eqref{eq:NSE}} is  $u \in L^2 (\T^2  \times (0,T)) \cap C([0,T],L^1(\T^2))$ with zero-mean on $\T^2$  solving 
distributionally \eqref{eq:NSE}.
\end{defi}

We prove the following dichotomy for very weak solutions of \eqref{eq:NSE}, in terms of the index $p$ of the path space $C([0,T],\mathcal{H}^{p})$ for vorticity.

\begin{thm}\label{thm:main}
\begin{itemize}
    \item[(A)] Well-posedness: Fix $p \ge 1$. In terms of $\omega$, $C([0,T],\mathcal{H}^{p})$ is a well-posedness path space for \eqref{eq:NSE}. More precisely, if for two very weak solutions $u_1, u_2$ to \eqref{eq:NSE}, such that their vorticities belong to $C([0,T],\mathcal{H}^{p})$, holds $u_1 =u_2$ at $t=0$, then $u_1 \equiv u_2$.

 \item[(B)] Ill-posedness: Fix $p\in (0,1)$. In terms of $\omega$, $C([0,T],\mathcal{H}^{p})$ is a ill-posedness space for \eqref{eq:NSE}. More precisely, for any smooth initial datum $a_{in}$ there are infinitely many very weak solutions $\{u_i\}_{i \in \N}$ to \eqref{eq:NSE}, such that their gradients belong to $C([0,T],\mathcal{H}^{p})$, ${u_i}_{|t=0} = a_{in}$ $\forall_{i}$, but $u_i \neq u_j$ for $i \neq j$. (In fact, these solutions belong also to $C([0,T],W^{\sigma,1})$ for an arbitrary $\sigma<1$)
\end{itemize}
\end{thm}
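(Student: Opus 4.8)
\emph{Strategy and a common reduction.} The two parts need very different tools. In both I use that on $\T^2$ the Biot--Savart law $u=\na^\perp(-\Delta)^{-1}\w$ holds for zero-mean $u$, so that $\na u$ is the image of $\w$ under a Calder\'on--Zygmund operator; since such operators are bounded on $\Hp$ for every $0<p<\infty$ (Appendix), the hypotheses ``$\w\in C([0,T],\Hp)$'' and ``$\na u\in C([0,T],\Hp)$'' are equivalent. At $p=1$ this is precisely where the Hardy structure enters: it keeps $\na u\in C([0,T],L^1)$, which $\w\in C([0,T],L^1)$ alone would not give, Calder\'on--Zygmund operators being unbounded on $L^1$.

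\emph{Part (A).} For $p\ge1$ one has $\Hp\hookrightarrow L^1(\T^2)$ (classical for $p=1$; immediate for $p>1$, where $\Hp=L^p$ on the finite-measure torus), hence $\na u\in C([0,T],L^1)$, i.e.\ $u\in C([0,T],W^{1,1})$ with $\divergence u=0$; note the solutions of Part (B) lie in $C([0,T],W^{\sigma,1})$ for all $\sigma<1$ but not for $\sigma=1$, so $W^{1,1}$ is the expected borderline. I would show that any such very weak solution coincides with the mild solution issued from $\w|_{t=0}$, which is unique. For $p>1$ the datum is subcritical, $u\in C([0,T],W^{1,p})$, and a standard parabolic bootstrap together with a Gr\"onwall estimate on the difference, using continuity at $t=0$, forces coincidence. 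For the endpoint $p=1$ the class is scaling-critical, but the divergence-free drift $u\in L^1_t W^{1,1}_x$ is admissible for the DiPerna--Lions theory, so the vorticity transport--diffusion equation $\pa_t\w+u\cdot\na\w-\Delta\w=0$ has a unique renormalized solution; parabolic smoothing gives $\w\in C^\infty(\T^2\times(0,T])$, and the difference identity $\pa_t\bar\w+u_1\cdot\na\bar\w-\Delta\bar\w=-\bar u\cdot\na\w_2$ ($\bar\w=\w_1-\w_2$, $\bar u=\na^\perp(-\Delta)^{-1}\bar\w$) is closed by a Gr\"onwall argument, using the dissipation, the derivative gain of Biot--Savart, the smoothing of $\w_2$ for $t>0$, and the Hardy--Sobolev improvement available from $\na u\in\Hp$. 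Since $\w_1=\w_2$ at $t=0$, this yields $u_1\equiv u_2$. This direction is soft relative to Part (B).

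\emph{Part (B).} Run an intermittent convex integration scheme for the Navier--Stokes--Reynolds system $\pa_t u_q+\divergence(u_q\otimes u_q)-\Delta u_q+\na\pi_q=\divergence\mathring R_q$, $\divergence u_q=0$. Take $\bar u$ the globally smooth $2$d solution of \eqref{eq:NSE} with datum $a_{in}$ and set $u_0=\bar u+\chi(t)\,v$, with $v$ a fixed smooth divergence-free zero-mean profile and $\chi$ a temporal cutoff vanishing on $[0,\delta]$, so that the starting stress $\mathring R_0:=\divinv\big(\pa_t u_0+\divergence(u_0\otimes u_0)-\Delta u_0\big)$ is smooth, of size $O(1)$, and supported in $\{t\ge\delta\}$. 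All later perturbations are kept supported in $\{t\ge\delta/2\}$, so $u_q|_{t=0}=a_{in}$ for every $q$. At stage $q$ the perturbation $w_{q+1}=u_{q+1}-u_q$ is a superposition of $2$d intermittent building blocks (intermittent shear / Mikado-type flows) oscillating at frequency $\lambda_{q+1}$, concentrated on scale $r_{q+1}$, with amplitudes slaved to $\mathring R_q$ so that the low-frequency part of $w_{q+1}\otimes w_{q+1}$ cancels $-\mathring R_q$, leaving a smaller stress $\mathring R_{q+1}$. Two currencies are tracked along the iteration: $\|\na w_{q+1}\|_{\Hp}$ via the periodic atomic decomposition, choosing the block profile with enough vanishing moments that each rescaled block is a bounded multiple of an $\Hp$-atom, so that $\|\na w_{q+1}\|_{\Hp}^{p}$ is controlled (schematically) by the product of the number of blocks, the atom normalization (a negative power of $r_{q+1}$), and the amplitude smallness coming from $\|\mathring R_q\|$ and high-frequency cancellation; and the $L^2$, $C([0,T],L^1)$ and $W^{\sigma,1}$ norms of $w_{q+1}$ and $\mathring R_{q+1}$ by stationary phase and interpolation, with the dissipative term $\Delta w_{q+1}$ (of size a negative power of $r_{q+1}$) absorbed into $\mathring R_{q+1}$. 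Choosing $\lambda_q\nearrow\infty$ and $r_q\searrow0$ fast enough, one arranges $\mathring R_q\to0$ in $C([0,T],L^1)$, $\sum_q\|\na w_{q+1}\|_{\Hp}<\infty$ (there is ample room because $p<1$), $\sum_q\|w_{q+1}\|_{W^{\sigma,1}}<\infty$ for every $\sigma<1$, and $u_q\to u$ in $C([0,T],L^2)$; the limit $u$ is then a very weak solution with $\na u\in C([0,T],\Hp)$, $u\in C([0,T],W^{\sigma,1})$ for all $\sigma<1$, and $u|_{t=0}=a_{in}$. Infinitely many solutions are produced by carrying a free parameter through the construction --- say the amplitude $a\in(0,1)$ of $\chi v$ --- while tracking a fixed bounded linear functional of $u$ at some $t_0\in(\delta,T]$, so that distinct $a$ give distinct values of that functional in the limit, hence distinct solutions.

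\emph{Main difficulty.} I expect the crux to be the tension inside the building blocks: the sharp $\Hp$ gain with $p<1$ forces strong concentration ($r_{q+1}\to0$ quickly) and high-moment profiles, whereas the Navier--Stokes dissipation $-\Delta w_{q+1}$ penalizes concentration and the corrector must still be accurate enough to drive $\mathring R_q\to0$; making the $\Hp$-summability and the stress decay hold simultaneously --- on top of establishing and then systematically using the periodic $\Hp$ calculus (atomic decomposition, the action of $\divinv$ and of Biot--Savart, multiplication by smooth cutoffs, the $p$-triangle inequality), which is why the Appendix is needed --- is the technical heart of the argument.
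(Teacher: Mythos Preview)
Your route through DiPerna--Lions and a Gr\"onwall closure is more complicated than needed and, as sketched, not justified: the vorticity equation is nonlinear (the drift $u$ depends on $\w$), so DiPerna--Lions for linear transport does not give uniqueness directly; your claim of parabolic smoothing to $C^\infty$ for $t>0$ presupposes that the very weak solution already coincides with the mild one, which is the point at issue; and ``closed by a Gr\"onwall argument'' at the critical regularity $p=1$ is precisely where such closures are delicate. The paper's argument is one line: $\w\in C_t\mathcal{H}^1$ together with $\divergence u=0$ and zero mean gives $u\in C_t L^2$ by Proposition~\ref{prop:H1inL2}, and then the Ladyzhenskaya--Prodi--Serrin criterion yields uniqueness. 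You were in fact one step away from this: from $\na u\in C_t\mathcal{H}^1\subset C_t L^1$ you already have $u\in C_t W^{1,1}(\T^2)\hookrightarrow C_t L^2(\T^2)$ by the endpoint Sobolev embedding, and LPS applies without any renormalisation argument.

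\textbf{Part (B).} The convex integration outline is broadly right, but it misses the decisive mechanism: \emph{temporal} intermittency. In two dimensions the dissipative contribution $\na w^p$ to the new stress has $C_tL^1_x$ size of order $\lambda\mu_1^{-1/2}\mu_2^{1/2}$, and since the scheme forces $\mu_2\gg\mu_1$ (so that the divergence corrector is lower order), this is a positive power of $\lambda$ for every admissible choice of spatial parameters; purely spatial intermittency cannot absorb it. The paper, following Cheskidov--Luo, multiplies the principal perturbation by a time-concentrated profile $g_\kappa(\nu t)$, so that the $L^1_{t,x}$ norm of the dissipative error gains a factor $\kappa^{-1/2}$, and introduces a dedicated corrector $w^g=\ProL\big(-\nu^{-1}h_\kappa(\nu t)\,\divergence\mathring R_0\big)$ to cancel the portion $(1-g_\kappa^2)\mathring R_0$ of the old stress that the time-localised principal part no longer reaches. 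Your sentence about the dissipative term being ``of size a negative power of $r_{q+1}$'' and ``absorbed into $\mathring R_{q+1}$'' names the obstacle but supplies no mechanism; without temporal concentration the iteration does not close. A secondary difference: the paper obtains infinitely many solutions more cleanly, via the stronger Theorem~\ref{thm:main2}, which builds solutions equal to a prescribed smooth $v_1$ on $[0,1/8]$ and to an arbitrary smooth $v_2$ on $[7/8,1]$, and then varies $v_2$.
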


The proof of the well-posedness side is almost immediate: $\omega \in C([0,T],\mathcal{H}^{p})$ for $p\ge1$, with $u \in C([0,T],L^1(\T^2))$ yields $u\in C([0,T],L^2(\T^2))$ thanks to Proposition \ref{prop:H1inL2}. This means that Ladyzhenskaya-Prodi-Serrin condition applies (see for instance Theorem 1.3 of \cite{cheskidovluo2022}), yielding uniqueness\footnote{Since there are regular solutions emanating from regular data for 2D NSE, our well-posedness statement is not empty. It could be interesting to prove that initial vorticity in $\mathcal{H}^{p}$, in particular for $p=1$, gives rise to mild solution with vorticity in $C([0,T],\mathcal{H}^{p})$.}.

In fact, we prove a result stronger than Theorem \ref{thm:main} (B), namely the following result on existence of pathological solutions 

\begin{thm}\label{thm:main2}
Take any two smooth solutions $v_1, v_2$ to \eqref{eq:NSE}. Fix $p\in (0,1)$ and $\sigma\in (0,1)$.
There exists non-zero very weak solution $u$ to \eqref{eq:NSE}, $u \in C([0,T],W^{\sigma,1}) \cap L^2 (\T^2  \times (0,T))$, with $\nabla u\in C([0,T],\mathcal{H}^{p})$, such that: $u= v_1$ on time interval $[0, \sfrac{1}{8}]$ and  $u= v_2$ on time interval $[\sfrac{7}{8}, 1]$.
\end{thm}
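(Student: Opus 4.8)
The plan is to use a convex integration scheme à la Buckmaster–Vicol, adapted to the 2D setting at the level of vorticity and with the additional goal of controlling $\nabla u$ in the periodic Hardy space $\mathcal{H}^p$. First I would set up a ``gluing + perturbation'' framework: on the time interval $[0,1]$, interpolate between the two smooth solutions $v_1$ and $v_2$ by solving the Navier–Stokes system with a suitable forcing (or, more in the spirit of convex integration, by constructing a sequence of approximate solutions $(u_q, R_q)$ to the Navier–Stokes–Reynolds system $\partial_t u_q + \divergence(u_q \otimes u_q) - \Delta u_q + \nabla \pi_q = \divergence R_q$, $\divergence u_q = 0$) so that $u_0$ equals $v_1$ near $t=0$ and equals $v_2$ near $t=1$, the stress $R_0$ is supported in the middle time interval $[\sfrac14,\sfrac34]$, and all the $u_q$ share this property because the perturbations will be time-localized away from the endpoints. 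The limit $u = \lim_q u_q$ then solves \eqref{eq:NSE} distributionally, coincides with $v_1$ on $[0,\sfrac18]$ and with $v_2$ on $[\sfrac78,1]$, and is non-zero (e.g.\ by choosing $v_1 \neq v_2$, or by ensuring a nontrivial perturbation is actually added).

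Next I would specify the building blocks of the perturbation $w_{q+1} = u_{q+1} - u_q$. In 2D one can use intermittent Beltrami-type flows or, better for the Hardy-space bound, spatially oscillatory shear/Mikado-type flows built from a fixed profile on $\T^2$, rescaled by a large frequency parameter $\lambda_{q+1}$ and an amplitude $\delta_{q+1}^{1/2}$ tied to the size of $R_q$. The key quantitative point is the choice of parameters: one needs $\| w_{q+1} \|_{L^2} \lesssim \delta_{q+1}^{1/2}$ (smallness of the perturbation), $\| R_{q+1} \|_{L^1} \lesssim \delta_{q+2}$ (decay of the stress), and simultaneously $\| \nabla w_{q+1} \|_{\mathcal{H}^p} \lesssim \lambda_{q+1} \delta_{q+1}^{1/2} \cdot (\text{geometric factor})$ summable in $q$. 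Since $\nabla w_{q+1}$ at frequency $\sim \lambda_{q+1}$ has $\mathcal{H}^p$-norm controlled — via the maximal-function / atomic characterization developed in the Appendix — by $\lambda_{q+1}^{1 - 2(1/p - 1)}$ times lower-order norms when $p<1$ (the point being that a single high-frequency bump of unit $L^\infty$ mass costs only $\lambda^{2 - 2/p}$ in $\mathcal{H}^p$, which is a \emph{gain} for $p<1$), the geometric series in $q$ closes precisely in the regime $p \in (0,1)$; this is exactly where the threshold $p=1$ enters and why one cannot push the construction to $p \ge 1$ (consistent with part (A)). One should also verify $u \in C([0,T], W^{\sigma,1})$ for $\sigma < 1$ by a parallel, easier interpolation-type estimate, and $u \in L^2(\T^2\times(0,T))$ from $\sum_q \|w_{q+1}\|_{L^2} < \infty$.

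Then I would carry out the inductive step in the standard way: given $(u_q, R_q)$, mollify in space and time, solve the transport/divergence equations for the correctors, add a principal perturbation cancelling the low-frequency part of $R_q$ and an incompressibility corrector, and estimate the new Reynolds stress $R_{q+1}$ by splitting it into transport, Nash, oscillation, and dissipation errors, each bounded using the parameter inequalities above. The time-support property propagates automatically if the cutoff functions used to localize $R_q$ in $[\sfrac14,\sfrac34]$ (and to glue to $v_1, v_2$) are kept fixed through the iteration, so $u_{q+1} = u_q$ outside $[\sfrac18,\sfrac78]$ at every stage. Finally, passing $q\to\infty$, the $C([0,T],L^2)$-limit $u$ is a very weak solution (the $C([0,T],L^1)$ regularity and zero-mean follow from the $W^{\sigma,1}$ bound and the mean-zero property of each $w_{q+1}$), with $\nabla u \in C([0,T],\mathcal{H}^p)$ from the summable $\mathcal{H}^p$ bounds.

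The main obstacle I expect is the Hardy-space estimate on $\nabla w_{q+1}$. Unlike the usual convex integration bookkeeping in $L^p$ or negative Sobolev norms, $\mathcal{H}^p$ for $p<1$ is not a function space one can estimate by brute interpolation; one must genuinely use the structure of the building blocks — that each high-frequency corrector is (a divergence of) a sum of well-separated, zero-mean, smooth bumps at scale $\lambda_{q+1}^{-1}$ — together with the periodic atomic/maximal-function theory from the Appendix to show the $\mathcal{H}^p$-norm of $\nabla w_{q+1}$ is governed by the number and amplitude of these bumps rather than by their derivatives' $L^\infty$ size. Getting the exact power of $\lambda_{q+1}$ right, and checking it is compatible with the (Nash and oscillation) error estimates that force $\lambda_{q+1}$ to grow fast, is the delicate balancing act; a secondary technical point is ensuring the corrections and mollifications do not destroy the clean time-support structure needed for the gluing with $v_1$ and $v_2$.
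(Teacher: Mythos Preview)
Your overall framework matches the paper's: build an initial approximate solution $u_0$ that glues $v_1$ and $v_2$ with a Reynolds stress supported away from the time endpoints, then iterate a convex–integration proposition that shrinks the stress while keeping $\nabla w$ small in $C_t\mathcal{H}^p$ and $w$ small in $C_tW^{\sigma,1}$ and $L^2_{t,x}$, and pass to the limit. The identification of the $\mathcal{H}^p$ atomic estimate as the delicate point is also correct.

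However, there is a genuine gap: you omit \emph{temporal intermittency}, and without it the scheme does not close for Navier--Stokes. The dissipation error $R^\Delta = \nabla w + (\nabla w)^T$ must be small in $L^1_{t,x}$, but with purely spatial intermittent Mikado blocks one has $\|\nabla w^p\|_{L^1_x}\gtrsim \lambda$ (concentration cannot beat the frequency factor in $L^1$ in two dimensions), so $\|R^\Delta\|_{L^1_{t,x}}$ cannot be made small while $\lambda\to\infty$. The paper, following Cheskidov--Luo, multiplies the spatial building blocks by a temporally concentrated profile $g_\kappa(\nu t)$ with $\|g_\kappa\|_{L^r_t}\sim\kappa^{1/2-1/r}$; this buys a factor $\kappa^{-1/2}$ in $\|R^\Delta\|_{L^1_{t,x}}$ at the cost of a factor $\kappa^{1/2}$ in $\|\nabla w^p\|_{C_t\mathcal{H}^p_x}$. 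The latter loss is then absorbed by the spatial Hardy gain $\mu_1^{-2/p}$ (coming from the small support of the atoms), and an additional time corrector $u^g$ handles the residual $(1-g_\kappa^2)\mathring R_0$. Your list of errors includes ``dissipation'' but treats it as routine; in fact it is precisely this term that forces the extra temporal mechanism and several auxiliary correctors ($u^t$, $u^g$), and the parameter balance involves not a single $\lambda_{q+1}$ but $\mu_1,\mu_2,\kappa,\omega,\nu,\lambda$ simultaneously.

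A secondary point: ``zero-mean'' bumps suffice only for $p>2/3$. For general $p\in(0,1)$ the $\mathcal{H}^p$ atoms must have vanishing moments up to order $N=\lfloor 2(p^{-1}-1)\rfloor$, and the paper achieves this by writing $w^p$ as a $(2N+4)$-th order differential operator applied to a potential $A_k$ (so each localized piece is automatically such a derivative). Your description ``(a divergence of) \dots\ zero-mean bumps'' would need to be upgraded accordingly. Finally, the time-support of $R_q$ is not kept fixed but is allowed to grow by a summable amount $\tau_q$ at each step (the cutoff's time derivative creates error just outside the previous support); choosing $\sum_q\tau_q<1/8$ is what guarantees agreement with $v_1,v_2$ on $[0,1/8]\cup[7/8,1]$.
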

Theorem \ref{thm:main} follows from Theorem \ref{thm:main2} by taking $v_1$ to be smooth solution to \eqref{eq:NSE} with its initial datum $a_{in}$, and taking $v_2$ to be any (other) smooth solution to \eqref{eq:NSE} (for instance, starting from initial datum $b_{in}$, such that $\|b_{in}\|_{L^2} = \frac{1}{2} \|{v_1}_{|t=1}\|_{L^2}$).

\begin{rem}
(i) Our results show there is a 'pathological solution' (in our low-regularity path space) to \eqref{eq:NSE} for any smooth datum. In particular, the criterion distinguishing between 'well-posedness' and 'ill-posedness' regimes cannot be stated solely in terms of (regularity of) space of initial data, but must involve a desired path space\footnote{This comment is aimed at clarifying the way Theorem \ref{thm:main} is stated, and explaining why 'optimal regularity result' of type mentioned in the preceding footnote is not our focus. It seems that some authors present similar non-uniqueness results merely in terms of the space of initial data.}. \\
(ii) The 'genericity' of our pathological solutions can be expressed in a sharper way than 'infinitely many emanating from arbitrary $a_{in}$', using Baire category approach.
\end{rem}

Besides providing a sharp threshold for well/ill-posedness of $2$D NSE, our paper develops a toolbox for convex integration in Hardy spaces on $\T^n$ (see Appendix), which we hope to be useful in other contexts. 
The advantage of Hardy spaces over Lebesgue setting is striking in the range $0<p< 1$, where $L^p$ sets are useless, whereas $\mathcal{H}^{p}$ spaces retain most of the desired properties, and allow to mimick '$p$-scaling' considerations. Even in the case $p=1$, due to Calder\'on-Zygmund theory, $\mathcal{H}^{1}$ may be preferable over $L^1$.

\subsection{Discussion and background}

Our work is inspired by the important result of Cheskidov\&Luo \cite{cheskidovluo2022}, \cite{cheskidovluo2023}, which indicate that convex integration for NSE can critically benefit from time intermittency.
Furthermore, the best to date regularity known for non-unique solutions of 2D Navier-Stokes equations has been obtained in those papers: (a) $u\in C((0,T];W^{{\sigma}',1})$ for any ${\sigma}'<1$ in \cite{cheskidovluo2023}, or (b) $u\in L^1(0,T;W^{1,q})$, $q<\infty$ in \cite{cheskidovluo2022}. It seems that our result is the first one that manages to control the full derivative uniformly in time: namely, on top of the just-mentioned regularity class (a) appearing in \cite{cheskidovluo2023}, our non-unique solution enjoys $\nabla u \in C([0,T]; \Hp)$ with $p$ arbitrarily close to $1$, see Theorem \ref{thm:main2}.

A loose idea to use Hardy spaces in a convex integration scheme surfaced in the discussions of the first author with Stefano Modena over \cite{BrueColombo23_L1infty}. This resulted in \cite{buckmodena2024}, \cite{buckmodena2024compactly} of  Buck\&Modena, considering 2D Euler equations. More precisely, in \cite{buckmodena2024} the authors prove non-uniqueness of weak solutions of the 2d Euler equations with $\curl u \in C(\Hp(\R^2))$ with $2/3<p<1$, and improve in \cite{buckmodena2024compactly} to any $0<p<1$, obtaining there also much better control of supports. Let us mention in passing that the result  \cite{BrueColombo23_L1infty} has been improved, see \cite{BrueColomboKumar}.

Even though the periodic setting is the simplest one, the space $\Hp(\T^n)$ seems not even properly defined in the literature, so we decided to systematically develop the needed theory in Appendix. It turns out that the space $\Hp(\T^n)$ has several useful advantages of the periodic (compact) setting over the full-space case, including:
\begin{enumerate}
    \item The embedding $\Hp(\T^n)\subset \mathcal{H}^q(\T^n)$ for any $0<q<p\leq \infty$,
    \item The fact that not all atoms must have vanishing moments. Consequently, the constraint $2/3<p<1$ can be removed even within the methodology of \cite{buckmodena2024}, and certain additional technical difficulties, specific for the NSE case, can be circumvented.
\end{enumerate}

Let us mention that the convex integration methods for incompressible fluid dynamics are well established, see in particular \cite{Laszlo, Isett, BuckmasterVicol, BCV}, and the recent reviews \cite{rev1, rev2}. 
There are another exciting approaches to non-uniqueness, including \cite{Vishik1, Vishik2}, see also \cite{bookABC+, spanish}; \cite{JiaSverak, ABC}; \cite{ems}, and \cite{CP}.

\subsection{Notation}
In this subsection we fix some notation.

\begin{itemize}
\item We write $a\lesssim b$ if
there exists a constant $C>0$ such that $a\leq Cb$, and the constant does not depend on important parameters (for instance, in a convex integration step, it depends only on quantities fixed by the inductive assumption).

\item We denote by $C^\infty_0(\T^2,\R^2)$ the space of zero mean smooth periodic functions with values in $\R^2$.

\item We denote by $\Sym$ the set of real $2\times 2$ symmetric matrices.

\item We denote by $\ProL$ the Leray projector acting on $\T^n$.

\end{itemize}

\section*{Acknowledgement} We would like to thank Adam Black and Pawe{\l} Goldstein for useful exchange concerning Hardy spaces.

\section{Non-uniqueness}\label{sec:mip}
The non-uniqeness will be achieved by invoking iteratively the key proposition, presented in this subsection. It allows to reduce the error in an approximate solution to \eqref{eq:NSE}, called here the NS-Reynolds system (or, briefly, NSR).

\begin{defi}[Solution to NS-Reynolds system]
A solution to the NS-Reynolds system (NSR) is a tuple $(u,p,R)$ of smooth functions 
$$u\in C^\infty([0,1]\times\T^2;\R^2),
\quad \pi\in C^\infty([0,1]\times\T^2), \quad R\in C^\infty([0,1]\times\T^2;\textnormal{Sym}_{2\times 2})$$
such that 
\begin{equation}\label{eq:NSR}
\begin{split}
\pa_t u+\divergence(u\otimes u) - \Delta u+\na \pi&=-\divergence {R}, \\
\divergence u&=0. 
\end{split}
\end{equation}
    
\end{defi}

\begin{prop}[Key proposition]\label{prop:main}
Choose any $\delta, \tau>0$, {$0\leq \sigma<1$, }and $0<p<1$. Assume that there exists a smooth solution $(u_0,R_0,\pi_0)$ to \eqref{eq:NSR}.
Then there exists another smooth solution $(u_1,R_1,\pi_1)$ to \eqref{eq:NSR} such that 
\begin{subequations}\label{eq:mainsubequations}
\begin{equation}\label{eq:mainpropR}
\norm{R_1}_{L^1_{t,x}}<\delta,
\end{equation}
\begin{equation}\label{eq:mainpropu}
\norm{u_1-u_0}_{L^2_{t,x}}\le M \norm{\mathring{R}_0}^\frac{1}{2}_{L^1_{t,x}} + \delta,
\end{equation}
\begin{equation}\label{eq:mainpropcurl}
\sup_t \norm{\na(u_1-u_0)}_{\Hp}< \delta,
\end{equation}
\begin{equation}\label{eq:mainpropCL}
{\sup_t \norm{u_1-u_0}_{W^{\sigma,1}}< \delta,}
\end{equation}
\end{subequations}
where $M = 8\sqrt{10}$.

Furthermore, if $R_0\equiv 0$ on $[0,t_0] \cup [1-t_0,1]$ for some $t_0 \in (0, \frac{1}{2})$, then  $R_1\equiv 0$ on $[0,t_0-\tau] \cup [1-t_0+\tau,1]$ and $u_0 \equiv u_1$ on $[0,t_0-\tau] \cup [1-t_0+\tau,1].$
\end{prop}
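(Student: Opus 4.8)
The plan is to run a single step of convex integration of the Cheskidov--Luo type, adapted to produce Hardy-space control on the full gradient of the perturbation. I would write $u_1 = u_0 + w$, where the perturbation $w$ is a sum of highly oscillatory building blocks supported on the time intervals where $\mathring R_0$ is ``large''. Concretely, one chooses a large spatial frequency parameter $\lambda\in\N$ and a large temporal frequency parameter $\mu$, and takes $w$ to be (essentially) $\sum_k a_k(t,x)\,W_k(\lambda x)\,g_k(\mu t)$, where the $W_k$ are suitably normalized stationary flows (intermittent jets / Mikado-type or the 2D analogue used in \cite{cheskidovluo2023}), $g_k$ are disjointly supported temporal cutoffs implementing time intermittency, and the amplitudes $a_k$ are built from $\mathring R_0$ via the standard geometric lemma so that $\sum_k a_k^2 \fint W_k\otimes W_k \approx -\mathring R_0 + (\text{trace part})$. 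A divergence corrector is added to keep $\divergence w = 0$. One then defines $R_1$ by solving $\divergence R_1 = \pa_t w + \divergence(u_0\otimes w + w\otimes u_0) + \divergence(w\otimes w + \mathring R_0) - \Delta w$ using the operator $\divinv=\divergence^{-1}$; the principal term $w\otimes w + \mathring R_0$ is handled by the algebraic identity, and the remaining ``oscillation'', ``transport'', ``Nash'' and ``dissipative'' errors are made small by sending $\lambda,\mu\to\infty$ in the right order.

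The key new point relative to \cite{cheskidovluo2023} is the bound \eqref{eq:mainpropcurl}. Here is where the Hardy-space theory of the Appendix enters: each building block $a_k(t,x)W_k(\lambda x)g_k(\mu t)$, after differentiating once in $x$, is (up to the slowly varying factor $a_k g_k$, which can be absorbed by a Leibniz expansion and the boundedness of multiplication by smooth functions on $\Hp$) of the form $\lambda\,(\na W_k)(\lambda\cdot)$, a rescaling of a fixed smooth mean-zero periodic function. For such a rescaled profile the $\Hp(\T^2)$ norm scales like $\lambda^{2(1/p-1)}\cdot\lambda$ times the $\Hp$ norm of the fixed profile times the amplitude size, while the gain from intermittency (the building blocks are concentrated on sets of small measure $r^{?}$ and we are free to choose the concentration parameter) produces a negative power of $\lambda$ that, because $p<1$, beats the loss $\lambda^{2/p-1}$ provided the jets are chosen thin enough; one must also pay attention to the temporal cutoffs $g_k(\mu t)$, but these do not affect the spatial $\Hp$ norm at fixed $t$ and only cost powers of $\mu$ in the $L^2_{t,x}$ and error estimates. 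Thus \eqref{eq:mainpropcurl} follows by first fixing the intermittency (concentration) exponent as a function of $p$ so the spatial scaling is favorable, and then choosing $\lambda$ large. The estimate \eqref{eq:mainpropu} with the explicit constant $M=8\sqrt{10}$ comes, as usual, from $\|w\|_{L^2_{t,x}}\lesssim \|\mathring R_0\|_{L^1_{t,x}}^{1/2}$ up to lower-order correctors, with the numerical constant tracked through the geometric lemma and the normalization of the $W_k$; \eqref{eq:mainpropR} and \eqref{eq:mainpropCL} are the by-now-standard smallness of the new Reynolds stress and the sub-critical $W^{\sigma,1}$ control (which for $\sigma<1$ is weaker than the $\Hp$ control on $\na w$ via the embedding $\Hp\hookrightarrow L^1 \hookrightarrow W^{\sigma,1}$-type interpolation, or directly from the block estimates).

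For the ``furthermore'' part, temporal localization is automatic from the construction: all amplitudes $a_k$ are built from $\mathring R_0$ multiplied by a partition-of-unity in time that is supported where $\mathring R_0\neq 0$, so if $\mathring R_0\equiv 0$ on $[0,t_0]\cup[1-t_0,1]$ then $w\equiv 0$ there after possibly shrinking the support by an arbitrarily small $\tau$ (needed because the temporal cutoffs must be smooth, hence slightly fatter than the support of $\mathring R_0$). Consequently $u_1\equiv u_0$ and $R_1\equiv 0$ on $[0,t_0-\tau]\cup[1-t_0+\tau,1]$. I would organize the proof as: (1) state the geometric lemma and fix notation for the building blocks and their rescaling/concentration parameters; (2) define $w$, its corrector, and $R_1$; (3) prove the four estimates \eqref{eq:mainpropR}--\eqref{eq:mainpropCL} by choosing parameters in the order ``concentration $\to$ $\mu$ $\to$ $\lambda$'' (or whatever order the Nash/oscillation balance dictates); (4) verify temporal support propagation. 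The main obstacle I anticipate is step (3) for \eqref{eq:mainpropcurl}: one must simultaneously keep the new Reynolds error small (which pushes toward large oscillation and strong concentration) and keep $\|\na w\|_{\Hp}$ small (which, because of the $\lambda^{2/p-1}$ loss in the Hardy norm under rescaling, constrains the concentration), so the parameter inequalities are tighter than in the $L^2$-based schemes, and it is exactly the freedom $p<1$ — together with the periodic-Hardy facts from the Appendix, in particular that atoms need not have vanishing moments — that makes the window nonempty. Everything else is a careful but routine adaptation of \cite{cheskidovluo2023}.
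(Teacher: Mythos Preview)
Your high-level outline is right and matches the paper: one convex-integration step with temporal intermittency in the Cheskidov--Luo style, plus Hardy-space bookkeeping. But there is a genuine gap precisely at the point you flag as the main obstacle, the estimate \eqref{eq:mainpropcurl}.

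You write $w \sim \sum_k a_k W_k g_k$ and argue that $\nabla w$ in $\Hp$ reduces to estimating $\lambda(\nabla W_k)(\lambda\cdot)$, ``up to the slowly varying factor $a_k g_k$, which can be absorbed by \dots the boundedness of multiplication by smooth functions on $\Hp$''. This is false for $p<1$: multiplication by a smooth function is \emph{not} bounded on $\Hp$, because it destroys the vanishing-moment condition that concentrated atoms must satisfy (Definition~\ref{def:atoms}, item~2). Even if each localized piece of $\nabla W_k$ has vanishing moments up to order $N=\lfloor 2(p^{-1}-1)\rfloor$, the product $a_k\,\nabla W_k$ does not, and ``mean-zero'' alone is only enough when $N=0$, i.e.\ for $p$ close to $1$. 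You also invoke the wrong periodic-Hardy fact: the Appendix says \emph{big} atoms need no vanishing moments, but your building blocks are highly concentrated, hence small atoms, which do.

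The paper's fix is structural and is the real idea you are missing. The principal perturbation is defined not as $a_k W_k$ but as
\[
w^p \;=\; g_\kappa(\nu t)\sum_k \nabla^\perp\curl\,\Delta^N\bigl[\divergence\bigl(a_k(A_k+A_k^T)\bigr)\bigr],
\]
where $A_k$ is a potential with $W_k$ equal to a $(2N{+}3)$-fold derivative of $A_k$ in the perpendicular direction. This places the amplitude $a_k$ \emph{inside} a differential operator of order $2N+3$, so that $\nabla w^p$ is a derivative of order $2N+4$ of a function supported on $\lambda^2$ disjoint balls of radius $\sim(\lambda\mu_1)^{-1}$. Each piece then automatically has vanishing moments to order $2N+3\ge N$, Corollary~\ref{cor:LinftyinHp} applies directly, and one gets $\|\nabla w^p\|_{\Hp}\lesssim \kappa^{1/2}\lambda\,\mu_1^{1/2-2/p}\mu_2^{3/2}$, small because $1/2-2/p<0$ and $\mu_1$ is taken large. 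The commutator terms where derivatives hit $a_k$ instead of $A_k$ form a corrector $u^c$, handled separately; the temporal corrector $\nabla w^t$, which genuinely lacks vanishing moments, is controlled via the quantitative Proposition~\ref{prop:Hpboundnonvanishingatoms}. Finally, your remark ``$\Hp\hookrightarrow L^1$'' is also false for $p<1$; the $W^{\sigma,1}$ bound in the paper is obtained directly by interpolating the block $L^1$ and $W^{1,1}$ estimates, not through $\Hp$.
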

The proof of Proposition \ref{prop:main} is deferred to the following sections. Now we prove Theorem \ref{thm:main2} having Proposition \ref{prop:main}.

\begin{proof}[Proof of Theorem \ref{thm:main2}] Recall that $v_1, v_2$ are given smooth solutions to \eqref{eq:NSE}.
Take a nonzero, smooth, divergence-free $f:\T^2\times [0,1] \to \T^2$ such that $\int_{\T^2}f(t,x)dx=0 $ for any $t$, and $f(t) \equiv 0$ on $[0,1/3] \cup [2/3,1]$. Take two (time) cutoff functions $\eta_1, \eta_2:  [0,1] \to [0,1]$ such that $\eta_1 =1$ on $[0,1/4]$ and vanishes outside $[0,1/3]$, and $\eta_2 =1$ on $[3/4,1]$ and vanishes outside $[2/3,1]$. Thus 
\begin{equation}\label{eq:u0}
u_0 :=v_1 \eta_1 + f + v_2 \eta_2 =\begin{cases}
 v_1 & \text{for }  t \in [0,1/4], \\
f & \text{for }  t \in [1/3, 2/3], \\
v_2 & \text{for }  t \in  [3/4,1].
\end{cases}    
\end{equation}

We will apply Proposition \ref{prop:main} iteratively, with $\tau_n=2^{-n-4}$ and $\delta_n= C 2^{-n}$, where $C= \frac{1}{10 M}\min (1, \sup_t \norm{\nabla f}_{\Hp})$.
The starting tuple is $(u_0,R_0,\pi_0)$, with $u_0$ given by \eqref{eq:u0}, 
\[R_0 =-\divergence^{-1} (\partial_t u_0)-u_0\otimes u_0+\na u_0 {+\na u_0^T},\] where $\divergence^{-1}$ is symmetric antidivergence operator, see Lemma \ref{lem:antidivergenceoperators}, and $\pi_0 = v_1 \eta_1 + v_2 \eta_2$.
Observe that $R_0= 0$ on $[0,1/4]\cup [3/4,1]$.

Iterating Proposition \ref{prop:main}, we obtain a sequence
$(u_n,R_n,p_n)$, with $u_n$ being a Cauchy sequence in $L^2_{t,x} \cap C_t W_x^{s,1}$ and $\nabla u_n$ Cauchy in $C_t \Hp$, whereas equation \eqref{eq:mainpropR} implies that $R_n\to 0$ in $L^1_{t,x}$. Therefore the limit $u$ is a weak solution to Navier Stokes equations \eqref{eq:NSE} with the desired regularity. Observe that $u \neq 0$ no matter what are $v_1, v_2$ (in particular, for $v_1= v_2=0$), because by choice of $C$ we ensured that $\sup_t \norm{\nabla u}_{\Hp} \ge \sup_{t \in [1/3, 2/3]}\norm{\nabla u}_{\Hp} \ge \frac{9}{10}\sup_t \norm{\nabla f}_{\Hp}$. The last property of Proposition \ref{prop:main} and our choice of  $\tau_n$ ensures that $u \equiv v_1$ on $[0,1/8]$ and that $u \equiv v_2$ on $[7/8,1]$
\end{proof}

The rest of the article (except for Appendix) is aimed at proving Proposition \ref{prop:main}

\section{Preliminaries}
In order to decrease the error $R_0$
we will modulate a function on $\T^2$ (say $f$) with a concentrated, fast oscillating function. The latter arises as follows: given a function $g:\R\to\R$ supported on $(-\frac{1}{2},\frac{1}{2})$, we concentrate it, i.e.\ write $\mu^\frac{1}{2}g(\mu \cdot)$, with a parameter $\mu \ge 1$ and then periodise it with period $1$. The resulting function is denoted by adding subscript $\mu$, i.e.\ $g_\mu$. Observe that $g_\mu$ is intermittent, i.e. $\norm{\nabla^l g_\mu}_{L^s(\T)}=\mu^{l+\frac{1}{2}-\frac{1}{s}}\norm{g}_{L^s(\T)}$, which will allow us to control high derivatives at the cost of low integrability.
As usual in convex integration schemes, we will invoke fast oscillations to decrease error. Therefore, for $\phi \in C^\infty(\T^2)$ and $\lambda\in \N$, we will consider the fast-oscillating function $\phi(\lambda \cdot)$. Observe $\norm{\nabla^l\phi (\lambda \cdot)}_{L^s(\T^2)}=\lambda^l\norm{\nabla^l\phi}_{L^s(\T^2)}$. An analogous procedure will be performed for time concentrations and oscillations.
The concrete realisations of the concentrated, fast oscillation function will be presented when we introduce our building blocks. 

Now we introduce needed auxiliary results.
\subsection{Technical tools}

The following antidivergence operators will be used 
\begin{lemma}[Antidivergence operators]\label{lem:antidivergenceoperators}
Let $\lambda\in \N$ and $p \in [1, \infty]$.
\begin{enumerate}[label=(\roman*)]
   \item ($\divergence^{-1}$: symmetric antidivergence) There exists $\divinv:C^\infty_0(\T^2;\R^2)\to C^\infty_0(\T^2;\Sym)$ such that $\divergence\divinv u=u$ and for $i\geq 0$ one has
   $$\norm{\na^i \divinv u}_{L^p}\lesssim \norm{\na^i u}_{L^p},$$
    \item ($\fancyR$: symmetric bilinear antidivergence) There exists a bilinear operator $\fancyR:C^\infty(\T^2;\R)\times C^\infty_0(\T^2;\R^2)\to C^\infty_0(\T^2;\Sym)$ such that $\divergence\fancyR(f,u)=fu-\fint fu$ and 
    $$\norm{\fancyR(f,u_\lambda)}_{L^p}\lesssim \frac{1}{\lambda}\norm{u}_{L^p}\norm{f}_{C^1}.$$
    \item ($\tilde{\fancyR}$: symmetric bilinear antidivergence on tensors) There exists a bilinear operator $\tilde{\fancyR}:C^\infty(\T^2;\R^2)\times C^\infty_0(\T^2;\R^{2\times 2})\to C^\infty_0(\T^2;\Sym)$ such that $\divergence\tilde{\fancyR} (v,T_\lambda)=Tv-\fint Tv$ and 
    $$\norm{\tilde{\fancyR}(v,T_\lambda)}_{L^p}\lesssim \frac{1}{\lambda} \norm{T}_{L^p}\norm{v}_{C^1}.$$
\end{enumerate}
\end{lemma}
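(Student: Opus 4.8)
The plan is to build all three operators from the periodic inverse Laplacian $\Delta^{-1}$ acting on mean-zero functions, exactly as in the standard convex integration literature (e.g.\ the anti-divergence operators of De Lellis--Sz\'ekelyhidi). For part (i), I would first recall that on $\T^2$ the operator $\Delta^{-1}$ maps $C^\infty_0$ to $C^\infty_0$ and is bounded on homogeneous Sobolev/H\"older scales; it is a Calder\'on--Zygmund-type operator. Given $u = (u_j) \in C^\infty_0(\T^2;\R^2)$ I would set, componentwise,
\[
(\divinv u)_{ij} := \partial_i \Delta^{-1} u_j + \partial_j \Delta^{-1} u_i - \delta_{ij}\,\divergence \Delta^{-1} u \;-\; (\text{correction to land in }\Sym\cap C^\infty_0),
\]
then check $\divergence \divinv u = u$ by a direct computation using $\divergence u$ need not vanish, so one actually uses the symmetrised formula $(\divinv u)_{ij} = \partial_i v_j + \partial_j v_i - \divergence v\,\delta_{ij}$ with $v := \Delta^{-1}u$, for which $\partial_j(\divinv u)_{ij} = \Delta v_i + \partial_i \divergence v - \partial_i \divergence v = u_i$. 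Symmetry and mean-zero are immediate, and the estimate $\|\nabla^i \divinv u\|_{L^p} \lesssim \|\nabla^i u\|_{L^p}$ for $p\in(1,\infty)$ follows since each entry is a zeroth-order combination of Riesz-transform-type multipliers applied to $u$; the endpoint cases $p=1,\infty$ need a separate remark (either restrict to $p\in(1,\infty)$, or note the operator is convolution against an $L^1$ periodic kernel after one derivative gain, or invoke the Hardy-space machinery of the Appendix).

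For parts (ii) and (iii), the idea is the classical trick that lets one integrate by parts against a fast oscillation: write $\fancyR(f,u) := \divinv\big(f u - \fint f u - (\text{terms to correct the divergence of }f)\big)$ — more precisely, following De Lellis--Sz\'ekelyhidi one sets $\fancyR(f,u) = f\,\divinv u - \mathcal{G}(\nabla f, \divinv u)$ where $\mathcal{G}$ is a further anti-divergence handling the commutator, arranged so that $\divergence \fancyR(f,u) = fu - \fint fu$. The gain of $\tfrac1\lambda$ when the second argument is $u_\lambda = u(\lambda\cdot)$ comes because $\divinv(u_\lambda) = \tfrac1\lambda (\divinv u)(\lambda\cdot)$ by scaling (the anti-divergence lowers the frequency-$\lambda$ oscillation by one power of $\lambda$), and all the remaining factors of $f$ cost only $\|f\|_{C^1}$. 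Part (iii) is the same construction applied row-wise to a matrix field $T$ contracted against a vector $v$, so $\tilde{\fancyR}(v,T) = \fancyR(v_k, T_{\cdot k})$ summed appropriately; no new ideas, only bookkeeping of indices to ensure the output is symmetric and mean-zero.

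The main obstacle is the interaction between three requirements that pull against each other: the output must be (a) symmetric, (b) mean-zero, and (c) satisfy the exact identity $\divergence(\cdot) = (\text{prescribed mean-zero field})$, while the input $u$ (resp.\ $fu$, $Tv$) is \emph{not} assumed divergence-free. Landing in $\Sym$ forces the symmetrised formula, which introduces the extra $-\divergence v\,\delta_{ij}$ term, and one must verify this term does not spoil the divergence identity — it works in $2$D precisely because of the algebra above, but it is the place where a careless formula fails. A secondary subtlety is the $\lambda$-scaling bookkeeping in (ii)--(iii): one must be careful that the commutator term $\mathcal{G}(\nabla f,\divinv u_\lambda)$ also carries the full $\tfrac1\lambda$ (it does, since $\divinv u_\lambda$ already has it and $\nabla f$ does not oscillate), otherwise the claimed estimate degrades. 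I would handle this by first proving the scaling identity $\divinv(w(\lambda\cdot)) = \tfrac1\lambda(\divinv w)(\lambda\cdot)$ as a standalone observation, then every $\tfrac1\lambda$ in (ii) and (iii) becomes automatic. Finally, for the low-integrability endpoints relevant to the convex integration scheme (where we ultimately want $L^1$ and Hardy-space bounds), I would either state Lemma~\ref{lem:antidivergenceoperators} for $p\in(1,\infty)$ and upgrade to $\Hp$-bounds in the Appendix, or directly cite the Appendix's Calder\'on--Zygmund theory on $\T^n$ so that the operators are bounded $\Hp \to \Hp$ for all $0<p\le\infty$.
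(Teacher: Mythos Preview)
The paper does not give a proof of this lemma at all; it simply states that it ``is a simplified version of \cite[Proposition~4]{BMS2021}'' and moves on. Your sketch is precisely the standard construction underlying that reference (and, before it, De~Lellis--Sz\'ekelyhidi and Modena--Sz\'ekelyhidi), so there is no discrepancy in approach to discuss.

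Two minor points are worth tightening. First, your worry about the endpoints $p=1,\infty$ is unnecessary in dimension $2$: every entry of $\divinv u$ is of the form $\partial_i\Delta^{-1}u_j$, and on $\T^2$ the kernel of $\nabla\Delta^{-1}$ behaves like $|x|^{-1}$ near the origin, hence lies in $L^1(\T^2)$. Young's inequality then gives $L^p\to L^p$ boundedness for every $p\in[1,\infty]$, without any Calder\'on--Zygmund or Hardy-space input. Second, in part (ii) your commutator operator $\mathcal{G}$ can be written down explicitly rather than left schematic: setting
\[
\fancyR(f,u_\lambda)\;=\; f\,\divinv(u_\lambda)\;-\;\fint f\,\divinv(u_\lambda)\;-\;\divinv\!\Big((\divinv u_\lambda)\nabla f-\fint(\divinv u_\lambda)\nabla f\Big),
\]
one checks $\divergence\fancyR(f,u_\lambda)=fu_\lambda-\fint fu_\lambda$ using the identity $\fint(\divinv u_\lambda)\nabla f=-\fint f u_\lambda$ (integration by parts). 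Since $\divinv(u_\lambda)=\lambda^{-1}(\divinv u)(\lambda\cdot)$ and $\divinv$ is $L^p$-bounded, the claimed $\lambda^{-1}\|u\|_{L^p}\|f\|_{C^1}$ estimate follows directly; no iteration is required for a single power of $\lambda^{-1}$. Part (iii) is, as you say, the same computation with an extra index.
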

The above result is a simplified version of \cite[Proposition 4]{BMS2021}.

Let us fix directions
\begin{equation}\label{eq:dirs}
\xi_1=(1,0), \; \xi_2=(0,1),  \;\xi_3=(1,1),  \;\xi_4=(1,-1).    
\end{equation}
The following Lemma will be used to decompose the error, see \cite[Section 5]{BrueColombo23_L1infty}.

\begin{lemma}[Nash decomposition]\label{lem:Nashdecomposition}
There exist $\Gamma_k\in C^\infty (\Sym, \R), k=1,\dots,4$ such that for any $R\in \Sym$ such that $|R-I|<\frac{1}{8}$
$$R=\sum_{i=k}^4 \Gamma^2_k(R)\frac{\xi_k}{|\xi_k|}\otimes \frac{\xi_k}{|\xi_k|}.$$
\end{lemma}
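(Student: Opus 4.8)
The plan is to view the claimed identity as an (underdetermined) linear system for the scalar unknowns $\gamma_k:=\Gamma_k^2(R)\ge 0$, $k=1,\dots,4$, to fix the one remaining degree of freedom so that a manifestly positive explicit solution appears near $R=I$, and finally to upgrade the resulting formulas (which a priori involve square roots) to honestly $C^\infty$ functions on all of $\Sym$ by modifying $\sqrt{\,\cdot\,}$ away from the region of interest.

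First I would compute the four rank-one matrices: writing $R=\begin{pmatrix}a&b\\ b&c\end{pmatrix}$, one has $\frac{\xi_1}{|\xi_1|}\otimes\frac{\xi_1}{|\xi_1|}=\begin{pmatrix}1&0\\0&0\end{pmatrix}$, $\frac{\xi_2}{|\xi_2|}\otimes\frac{\xi_2}{|\xi_2|}=\begin{pmatrix}0&0\\0&1\end{pmatrix}$, $\frac{\xi_3}{|\xi_3|}\otimes\frac{\xi_3}{|\xi_3|}=\tfrac12\begin{pmatrix}1&1\\1&1\end{pmatrix}$, and $\frac{\xi_4}{|\xi_4|}\otimes\frac{\xi_4}{|\xi_4|}=\tfrac12\begin{pmatrix}1&-1\\-1&1\end{pmatrix}$. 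Hence $R=\sum_{k=1}^4\gamma_k\,\frac{\xi_k}{|\xi_k|}\otimes\frac{\xi_k}{|\xi_k|}$ is equivalent to the three scalar equations $\gamma_1+\tfrac12(\gamma_3+\gamma_4)=a$, $\gamma_2+\tfrac12(\gamma_3+\gamma_4)=c$, $\tfrac12(\gamma_3-\gamma_4)=b$. I would remove the extra freedom by imposing the normalization $\gamma_3+\gamma_4=1$, which forces $\gamma_3=\tfrac12+b$, $\gamma_4=\tfrac12-b$, $\gamma_1=a-\tfrac12$, $\gamma_2=c-\tfrac12$ (so that $\gamma_k=\tfrac12$ for all $k$ when $R=I$).

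Next, the positivity check: on $\{|R-I|<\tfrac18\}$ one has $|b|,|a-1|,|c-1|<\tfrac18$, so the four radicands $a-\tfrac12,\ c-\tfrac12,\ \tfrac12+b,\ \tfrac12-b$ all lie in $(\tfrac38,\tfrac58)$, in particular bounded away from $0$. I would therefore fix once and for all a function $\phi\in C^\infty(\R)$ with $\phi(s)=\sqrt{s}$ for $s\ge\tfrac14$ and $\phi$ arbitrary smooth bounded for $s<\tfrac14$, and define $\Gamma_1(R):=\phi(R_{11}-\tfrac12)$, $\Gamma_2(R):=\phi(R_{22}-\tfrac12)$, $\Gamma_3(R):=\phi(\tfrac12+R_{12})$, $\Gamma_4(R):=\phi(\tfrac12-R_{12})$. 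These are globally $C^\infty$ on $\Sym$, and on $\{|R-I|<\tfrac18\}$ they satisfy $\Gamma_k^2(R)=\gamma_k$, so the identity follows from the linear-algebra computation above.

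I do not expect a genuine obstacle here: every step is elementary. The only point needing a little care is reconciling the global smoothness requirement $\Gamma_k\in C^\infty(\Sym,\R)$ with the fact that the identity is only asserted on the small ball $|R-I|<\tfrac18$; this is precisely what the cutoff/modification of $\sqrt{\,\cdot\,}$ handles, using that the relevant radicands stay uniformly positive on (a slightly larger ball than) that ball.
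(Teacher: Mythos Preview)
Your argument is correct: the explicit choice $\gamma_1=R_{11}-\tfrac12$, $\gamma_2=R_{22}-\tfrac12$, $\gamma_3=\tfrac12+R_{12}$, $\gamma_4=\tfrac12-R_{12}$ solves the linear system, all four radicands lie in $(\tfrac38,\tfrac58)$ on $\{|R-I|<\tfrac18\}$, and the smoothing of $\sqrt{\,\cdot\,}$ below the threshold $\tfrac14$ yields globally $C^\infty$ functions $\Gamma_k$ on $\Sym$. The paper does not supply its own proof of this lemma but simply refers to \cite[Section~5]{BrueColombo23_L1infty}; your self-contained elementary computation is exactly the kind of argument one finds there.
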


 One of our purposes is to develop convex-integration-related toolbox in Hardy spaces on $\T^n$. This is done in the appendix. Now we distill those results from the appendix, which are directly needed in this paper.

\begin{prop}\label{cor:mainhardyembedding}
For any $0<p\leq q\leq \infty,$
\begin{equation*}
\norm{f}_{\Hp}\leq \norm{f}_{\mathcal{H}^q}.
\end{equation*} 
\end{prop}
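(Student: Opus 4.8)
The plan is to reduce the statement to the elementary nesting of Lebesgue spaces on a finite-measure domain. Recall that in the Appendix the quantity $\norm{f}_{\Hp}$ is defined as $\norm{f^{\ast}}_{L^p(\T^n)}$, where $f^{\ast}$ denotes the (grand, or radial) maximal function of $f$ associated with a fixed admissible approximate identity. The decisive observation is that $f^{\ast}$ is one and the same function on $\T^n$, independently of the exponent appearing in the outer norm; only the integrability exponent applied to it changes between $\Hp$ and $\mathcal{H}^q$. Hence, writing $g := f^{\ast}$, the assertion $\norm{f}_{\Hp}\le \norm{f}_{\mathcal{H}^q}$ is literally the inequality $\norm{g}_{L^p(\T^n)}\le\norm{g}_{L^q(\T^n)}$.

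Next I would invoke the fact that $\T=[-\tfrac{1}{2},\tfrac{1}{2}]$, so $\T^n$ carries Lebesgue measure of total mass $1$; that is, $(\T^n,\dif x)$ is a probability space. On any probability space one has $\norm{g}_{L^p}\le\norm{g}_{L^q}$ whenever $0<p\le q\le\infty$: for $q<\infty$ apply H\"older's inequality to $|g|^p=|g|^p\cdot 1$ with conjugate exponents $q/p$ and $(q/p)'$, using $\norm{1}_{L^{(q/p)'}}=1$; the case $q=\infty$ is immediate. Applying this with $g=f^{\ast}$ yields the claim, with constant exactly $1$. In particular, if $\norm{f}_{\mathcal{H}^q}<\infty$ then $f^{\ast}\in L^q(\T^n)\subset L^p(\T^n)$, so the left-hand side is finite and the inequality is meaningful; if $\norm{f}_{\mathcal{H}^q}=\infty$ there is nothing to prove.

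There is no real obstacle here; the only point worth emphasizing is conceptual rather than technical: this embedding runs \emph{opposite} to the situation on $\R^n$, and the reason is precisely that $\T^n$ has finite measure — the same structural feature that, in the Appendix, permits atomic decompositions with atoms that need not have vanishing moments. Should the Appendix instead adopt an atomic definition of $\Hp(\T^n)$ as primary, one would first cite the maximal-function characterization established there and then run exactly the argument above.
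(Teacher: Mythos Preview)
Your argument is correct and is exactly the paper's approach: the appendix records this embedding as equation \eqref{eq:hardyembedding} with the one-line justification ``H\"older inequality gives,'' which unpacks precisely to your observation that $\norm{f}_{\Hp}=\norm{m_\Psi f}_{L^p(\T^n)}$ with $m_\Psi f$ independent of $p$, together with $|\T^n|=1$. There is nothing to add.
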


\begin{prop}\label{prop:mainHpcomplete}
For $0<p\leq 1$, $\Hp$ is a complete metric space with the metric given by $d(f,g)=\norm{f-g}_{\Hp}^p$.
\end{prop}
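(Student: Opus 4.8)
The plan is to follow the standard soft argument for completeness of quasi-Banach Hardy spaces, transplanted to the periodic setting. \emph{First} I would check that $d$ is a metric. Symmetry and non-negativity are obvious, and $d(f,g)=0\iff f=g$ holds because the maximal operator $\mathcal{M}$ used to define $\Hp$ in the Appendix separates points of $\fancyD'(\T^n)$. The only substantial point is the triangle inequality for $d$, which reduces to the $p$-subadditivity
\[
\norm{f+g}_{\Hp}^p\le\norm{f}_{\Hp}^p+\norm{g}_{\Hp}^p,\qquad 0<p\le1,
\]
itself immediate from the definition: $\mathcal{M}$ is sublinear, hence $\mathcal{M}(f+g)\le\mathcal{M}f+\mathcal{M}g$ pointwise, and $\int(\mathcal{M}f+\mathcal{M}g)^p\le\int(\mathcal{M}f)^p+\int(\mathcal{M}g)^p$ by $(a+b)^p\le a^p+b^p$ for $a,b\ge0$. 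Then $d(f,h)=\norm{f-h}_{\Hp}^p\le\norm{f-g}_{\Hp}^p+\norm{g-h}_{\Hp}^p=d(f,g)+d(g,h)$.

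For completeness, take a $d$-Cauchy sequence $(f_k)$. Using the continuous embedding $\Hp(\T^n)\hookrightarrow\fancyD'(\T^n)$ from the Appendix, $(f_k)$ is Cauchy in $\fancyD'(\T^n)$, which is sequentially complete (Banach--Steinhaus), so $f_k\to f$ in $\fancyD'(\T^n)$ for some periodic distribution $f$. I would then upgrade this to convergence in $\Hp$ by a Fatou argument at the level of maximal functions. Writing $\mathcal{M}g(x)=\sup_\alpha|L_\alpha g(x)|$ with each $L_\alpha$ (convolution against a rescaled mollifier) a continuous map $\fancyD'(\T^n)\to C(\T^n)$, one gets $L_\alpha(f_j-f_k)(x)\to L_\alpha(f_j-f)(x)$ for every $\alpha$ and $x$, hence
\[
\mathcal{M}(f_j-f)(x)\le\liminf_{k\to\infty}\mathcal{M}(f_j-f_k)(x)
\]
from $\sup_\alpha\liminf_k\le\liminf_k\sup_\alpha$. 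Since $\mathcal{M}(f_j-f_k)$ is lower semicontinuous, hence measurable, Fatou's lemma yields $\norm{f_j-f}_{\Hp}^p\le\liminf_{k}\norm{f_j-f_k}_{\Hp}^p$ for every $j$, and the right-hand side tends to $0$ as $j\to\infty$ because $(f_k)$ is Cauchy. In particular $f_j-f\in\Hp$ for large $j$, whence $f=f_j-(f_j-f)\in\Hp$ by the $p$-subadditivity above, and $d(f_j,f)\to 0$.

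The completeness step itself is soft; the genuine content lives in the Appendix, and that is where I expect the real work to be: proving that $\Hp(\T^n)$ embeds continuously into $\fancyD'(\T^n)$ with $\norm{\cdot}_{\Hp}$ point-separating, and that the maximal function is (equivalent to) a supremum of maps continuous from $\fancyD'(\T^n)$ into $C(\T^n)$, so that the pointwise limits feeding Fatou's lemma are legitimate. These are the periodic analogues of the classical $\R^n$ facts; granting them, the argument above goes through verbatim.
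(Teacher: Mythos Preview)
Your proof is correct and very close in spirit to the paper's, resting on the same two ingredients: the continuous embedding $\Hp\hookrightarrow\fancyD'(\T^n)$ and the $p$-subadditivity of $\norm{\cdot}_{\Hp}^p$ coming from sublinearity of the maximal function. The execution differs slightly. Instead of your direct Fatou argument on $\mathcal{M}(f_j-f_k)$, the paper passes to a rapidly convergent subsequence $(\tilde a_n)$ with $d(\tilde a_n,\tilde a_{n+1})<2^{-n}$, telescopes the limit as $a-\tilde a_0=\sum_n(\tilde a_{n+1}-\tilde a_n)$, and bounds $\norm{a-\tilde a_0}_{\Hp}^p\le\sum_n\norm{\tilde a_{n+1}-\tilde a_n}_{\Hp}^p<\infty$ via the pointwise inequality $(m_\Psi\sum f_n)^p\le\sum(m_\Psi f_n)^p$. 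That last inequality implicitly uses exactly the lower-semicontinuity of $m_\Psi$ under $\fancyD'$-limits that you make explicit, so the two arguments are essentially equivalent; yours is a bit more direct in that it yields $d(f_j,f)\to 0$ in one stroke without the subsequence extraction.
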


\begin{defi}[$\Hp$-atoms]\label{def:mainatom}
Let $a\in \fancyD'(\T^n)$ be a measurable function, and $\bar{B}_a$ its representative ball according to Definition \ref{def:Rnball}. We say that $a$ is an $\Hp$-atom associated to $\bar{B}_a$ if:
\begin{enumerate}
    \item $\norm{{a}}_{L^\infty}\leq |\bar{B}_{a}|^{-\frac{1}{p}}$.
    \item If $\bar{B}_{a}$ is a small ball, $\int_{\overline{B}_{a}} x^\beta {a}=0$ for all multiindices $ \beta$ with $|\beta|\leq n(p^{-1}-1)$. 
\end{enumerate} 
\end{defi}
See definitions \ref{def:Rnball}, \ref{def:smallball} for rigorous meaning of the 'representative ball' and 'small ball'.

\begin{cor}\label{cor:mainLinftyinHp}
A function $f\in L^\infty(\T^n)$ satisfying condition \ref{itematomscancellations} in Definition \ref{def:mainatom}, satisfies the bound 
$$\norm{f}_{\Hp}\leq C|\bar{B}_f|^{\frac{1}{p}}\norm{f}_{L^\infty}.$$
\end{cor}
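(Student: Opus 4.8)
The plan is to deduce Corollary~\ref{cor:mainLinftyinHp} from Corollary~\ref{cor:mainLinftyinHp}'s 'atomic' prototype: namely, observe that if $f\in L^\infty(\T^n)$ satisfies the cancellation condition \ref{itematomscancellations} relative to its representative ball $\bar{B}_f$, then a suitably \emph{rescaled} multiple of $f$ is an $\Hp$-atom in the sense of Definition~\ref{def:mainatom}, and then invoke the (presumably already-established in the Appendix) fact that each $\Hp$-atom $a$ obeys $\norm{a}_{\Hp}\le C$ with $C$ independent of the atom. First I would set $\Lambda := |\bar{B}_f|^{1/p}\norm{f}_{L^\infty}$ (assuming $f\not\equiv 0$, the case $f\equiv 0$ being trivial) and consider $a := f/\Lambda$. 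By construction $\norm{a}_{L^\infty} = \norm{f}_{L^\infty}/\Lambda = |\bar{B}_f|^{-1/p}$, so condition~1 of Definition~\ref{def:mainatom} holds with $\bar{B}_a = \bar{B}_f$; and condition~2 is immediate since $a$ is just a scalar multiple of $f$ and the vanishing-moment condition is linear. Hence $a$ is an $\Hp$-atom.

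Next I would use the normalization of atoms in $\Hp$. The Appendix surely contains a statement of the form $\norm{a}_{\Hp}\le C$ for every $\Hp$-atom $a$, $C = C(n,p)$ (this is the ``easy half'' of the atomic characterization of $\Hp$, and is exactly the content one needs here). Granting that, by homogeneity of the $\Hp$-quasinorm under scalar multiplication, $\norm{f}_{\Hp} = \Lambda\,\norm{a}_{\Hp} \le C\Lambda = C\,|\bar{B}_f|^{1/p}\norm{f}_{L^\infty}$, which is the claimed inequality. A subtlety to check is that the representative ball $\bar B_f$ of $f$ and of its scalar multiple $a$ genuinely coincide: since ``representative ball'' in Definition~\ref{def:Rnball} should depend only on the support/geometry of $f$ (not its amplitude), $\bar B_{\lambda f} = \bar B_f$ for $\lambda\ne 0$; I would state this as a one-line observation (or cite the relevant line of the Appendix). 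Likewise, whether $\bar B_f$ is a ``small ball'' in the sense of Definition~\ref{def:smallball} is amplitude-independent, so the case distinction in condition~2 transfers without change.

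The only genuine obstacle is bookkeeping rather than mathematics: one must make sure the constant $C$ in the atomic bound $\norm{a}_{\Hp}\le C$ is the same $C$ (up to harmless relabeling) as in the statement, and that the argument does not secretly require $\bar B_f$ to be small (it does not: if $\bar B_f$ is a ``big'' ball the moment condition is vacuous, and the atomic bound still applies). I would therefore phrase the proof as: \textbf{(i)} reduce to $f\not\equiv0$; \textbf{(ii)} set $a = f/\bigl(|\bar B_f|^{1/p}\norm{f}_{L^\infty}\bigr)$ and verify $a$ is an $\Hp$-atom using amplitude-independence of $\bar B_f$; \textbf{(iii)} apply the Appendix bound $\norm{a}_{\Hp}\le C$ and rescale. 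This is short enough that the author's proof is likely essentially this, possibly with a direct verification of $\norm{a}_{\Hp}\le C$ inlined if that estimate is not isolated as a separate Appendix lemma.
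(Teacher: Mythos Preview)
Your proposal is correct and matches the paper's own proof almost verbatim: the paper sets $\lambda = \norm{f}_{L^\infty}|\bar B_f|^{1/p}$, notes that $\lambda^{-1}f$ is an atom, and then applies the uniform atomic bound (Proposition~\ref{prop:atomicbound}, via Proposition~\ref{prop:sumatomsinHp}) to conclude $\norm{f}_{\Hp}\lesssim\lambda$. Your additional remarks about amplitude-independence of $\bar B_f$ and the big-ball case are correct but more detailed than what the paper spells out.
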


\begin{prop}\label{prop:mainHpboundnonvanishingatoms}
Let $0<p\leq 1$. Let $f\in L^\infty(\T^n)$ be a function, and $\bar{B}_f$ its representative ball according to Definition \ref{def:Rnball}. Assume that $\bar{B}_f$ has center $\bar{x}$ and radius $\eps$, with $0<\eps<\frac{1}{2}$. Let $N= \lfloor n(\frac{1}{p}-1)\rfloor$. Then 
\begin{align*}
\norm{f}_{\Hp}^p\leq &C(p,n)\big(\eps^n\norm{f}_{L^\infty}^p
+|\log\eps|\max_{|\alpha|\leq N}\left|\int_{\bar{B}_f} x^\alpha f^\text{ext}(x) dx \right|^p\big),
\end{align*}
where $f^\text{ext}$ denotes the periodic extension of $f$.
\end{prop}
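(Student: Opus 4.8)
The plan is to decompose $f$ as a function enjoying the full set of vanishing moments required of an $\Hp$-atom at scale $\eps$, plus a controlled linear combination of fixed \emph{dual bumps} that individually carry the otherwise obstructing moments; each piece is then estimated via Corollary \ref{cor:mainLinftyinHp}, and the pieces are combined using the $p$-subadditivity of $\norm{\cdot}_{\Hp}^p$ (which follows from Proposition \ref{prop:mainHpcomplete}). The logarithm in the statement will emerge from a multiscale telescoping used to bound the $\Hp$-quasinorm of a single dual bump; its borderline case $n(p^{-1}-1)\in\N$ is the genuine source of the $|\log\eps|$, while in all other cases one simply overestimates a bounded quantity using $|\log\eps|\ge\log 2$.

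First I would build the dual bumps. Fix $\phi\in C_c^\infty(B(0,1))$ with $\phi\ge 0$ and $\phi>0$ on $B(0,\tfrac12)$; then $(P,Q)\mapsto\int PQ\phi$ is an inner product on the polynomials of degree $\le N$, so there are polynomials $Q_\alpha$ with $\int y^\beta Q_\alpha\phi\dif y=\delta_{\alpha\beta}$ for $|\alpha|,|\beta|\le N$. Set $\Psi_\alpha:=Q_\alpha\phi\in C_c^\infty(B(0,1))$ and, for $0<r<\tfrac12$, $\psi_\alpha^r(x):=r^{-n-|\alpha|}\Psi_\alpha\!\big((x-\bar x)/r\big)$; these satisfy $\supp\psi_\alpha^r\subset B(\bar x,r)$, $\norm{\psi_\alpha^r}_{L^\infty}\lesssim_{n,p}r^{-n-|\alpha|}$, and — crucially, \emph{independently of $r$} — $\int(x-\bar x)^\beta\psi_\alpha^r\dif x=\delta_{\alpha\beta}$ for all $|\beta|\le N$. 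Writing $\psi_\alpha:=\psi_\alpha^\eps$ and $c_\alpha:=\int_{\bar B_f}(x-\bar x)^\alpha f^{\mathrm{ext}}$, I would set $g:=f-\sum_{|\alpha|\le N}c_\alpha\psi_\alpha$, which is supported in $\bar B_f$ and has $\int(x-\bar x)^\beta g=0$, hence $\int_{\bar B_g}x^\beta g=0$, for all $|\beta|\le n(p^{-1}-1)$ — i.e.\ $g$ satisfies the cancellation condition of Definition \ref{def:mainatom}. Expanding $(x-\bar x)^\alpha$ in monomials and using that $|\bar x|$ is bounded on $\T^n$ gives $|c_\alpha|\lesssim_{n,p}\mathcal M$, where $\mathcal M:=\max_{|\alpha|\le N}\big|\int_{\bar B_f}x^\alpha f^{\mathrm{ext}}\big|$, so $\norm{g}_{L^\infty}\lesssim\norm{f}_{L^\infty}+\mathcal M\eps^{-n-N}$, and Corollary \ref{cor:mainLinftyinHp} (whose right-hand side for $g$ is $\lesssim\eps^n\norm g_{L^\infty}^p$, since $g$ lives in $\bar B_f$) yields
\[
\norm{g}_{\Hp}^p\lesssim\eps^n\norm{g}_{L^\infty}^p\lesssim\eps^n\norm{f}_{L^\infty}^p+\mathcal M^p\,\eps^{\,n(1-p)-Np}\lesssim\eps^n\norm{f}_{L^\infty}^p+\mathcal M^p,
\]
the last step using $N\le n(p^{-1}-1)$ (so the exponent is $\ge 0$) and $\eps<1$.

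Next I would estimate $\norm{\psi_\alpha}_{\Hp}^p$ by telescoping over dyadic scales. Put $r_j:=2^j\eps$ and let $J$ be the least integer for which $B(\bar x,r_J)$ is a \emph{big} ball in the sense of Definition \ref{def:smallball}; then $J\simeq|\log\eps|$ and $r_j<\tfrac12$ for $j\le J$. Telescoping, $\psi_\alpha=\sum_{j=0}^{J-1}\big(\psi_\alpha^{r_j}-\psi_\alpha^{r_{j+1}}\big)+\psi_\alpha^{r_J}$, where each difference $\psi_\alpha^{r_j}-\psi_\alpha^{r_{j+1}}$ is supported in $B(\bar x,r_{j+1})$, has \emph{all} moments up to order $N$ vanishing, and satisfies $\norm{\cdot}_{L^\infty}\lesssim r_j^{-n-|\alpha|}$; Corollary \ref{cor:mainLinftyinHp} then gives $\norm{\psi_\alpha^{r_j}-\psi_\alpha^{r_{j+1}}}_{\Hp}^p\lesssim r_{j+1}^{\,n}r_j^{-(n+|\alpha|)p}\lesssim(2^j\eps)^{\gamma_\alpha}$ with $\gamma_\alpha:=n(1-p)-|\alpha|p\ge 0$ because $|\alpha|\le N\le n(p^{-1}-1)$. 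The top term $\psi_\alpha^{r_J}$ is supported in a big ball, for which the cancellation condition is vacuous, so $\norm{\psi_\alpha^{r_J}}_{\Hp}^p\lesssim r_J^{\,n}\norm{\psi_\alpha^{r_J}}_{L^\infty}^p\lesssim r_J^{\gamma_\alpha}\lesssim 1$. Summing with $p$-subadditivity,
\[
\norm{\psi_\alpha}_{\Hp}^p\lesssim\sum_{j=0}^{J-1}(2^j\eps)^{\gamma_\alpha}+1\lesssim|\log\eps|,
\]
since for $\gamma_\alpha>0$ the sum is geometric and dominated by its last term $\lesssim 1$, while for $\gamma_\alpha=0$ it equals $J\simeq|\log\eps|$. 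Combining this with the estimate for $g$, with $|c_\alpha|\lesssim\mathcal M$, with finiteness of $\{|\alpha|\le N\}$ and with $|\log\eps|\ge\log 2$,
\[
\norm{f}_{\Hp}^p\le\norm{g}_{\Hp}^p+\sum_{|\alpha|\le N}|c_\alpha|^p\norm{\psi_\alpha}_{\Hp}^p\lesssim\eps^n\norm{f}_{L^\infty}^p+|\log\eps|\,\mathcal M^p,
\]
with all constants depending only on $p$ and $n$ (finitely many multiindices, finitely many exponents $\gamma_\alpha$).

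I expect the main difficulty to be the multiscale telescoping: one must keep, simultaneously, each telescoped piece supported at the scale $2^j\eps$, with its $L^\infty$-norm of the scale-correct size $(2^j\eps)^{-n-|\alpha|}$, \emph{and} with the full set of order-$\le N$ moments vanishing so that Corollary \ref{cor:mainLinftyinHp} applies to it at that scale — it is exactly the resulting exponent $\gamma_\alpha=n(1-p)-|\alpha|p$, which is $\ge0$ for $|\alpha|\le N$ and vanishes precisely in the borderline case, that makes the series summable or at worst logarithmic. A minor point is the top-scale term, handled by the fact that atoms on big balls need no cancellation; and if $\eps$ is already comparable to $1$ the claim follows at once from Corollary \ref{cor:mainLinftyinHp} applied directly to $f$ with a big representative ball.
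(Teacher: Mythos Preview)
Your proof is correct and follows essentially the same strategy as the paper's: subtract a moment-correcting function at scale $\eps$ so that the main piece satisfies the atom cancellation condition, then telescope the corrector over dyadic scales up to $O(1)$ and apply Corollary \ref{cor:mainLinftyinHp} to each piece, with the $|\log\eps|$ coming from the number of scales. The only cosmetic differences are that the paper packages the moment corrector as a single function $L_{\eps,\bar x,m}$ (borrowed from \cite{buckmodena2024compactly}, Lemma \ref{lem:BMpolynomia}) rather than your basis of dual bumps $\psi_\alpha^r$, and handles the top-scale term via the embedding $L^\infty\hookrightarrow\Hp$ rather than via the big-ball case of the corollary.
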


\begin{prop}\label{prop:mainPboundedHp}
Let $\ProL$ be the Leray projector, $0<p<\infty$, and $f\in \Hp$. Then, $$\norm{\ProL f}_{\Hp}\lesssim  \norm{ f}_{\Hp}.$$
\end{prop}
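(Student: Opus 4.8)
The statement to prove is Proposition \ref{prop:mainPboundedHp}: boundedness of the Leray projector on $\Hp(\T^n)$ for $0<p<\infty$.

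\medskip

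\textbf{Proof plan.}

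The natural approach is to recall that the Leray projector on $\T^n$ is $\ProL = \Id - \nabla \Delta^{-1} \divergence$ acting on mean-zero vector fields, so componentwise $\ProL$ is a matrix of operators of the form $\delta_{ij} - R_i R_j$, where $R_i = \partial_i (-\Delta)^{-1/2}$ are the periodic Riesz transforms. Thus it suffices to prove that each periodic Riesz transform (or, more safely, each second-order operator $R_iR_j = \partial_i\partial_j(-\Delta)^{-1}$, which avoids the square root and is a genuine Calderón--Zygmund operator of convolution type with a homogeneous degree-$(-n)$ kernel plus smooth corrections) is bounded on $\Hp(\T^n)$. For $p=1$ this is classical Calderón--Zygmund/Hardy space theory; the point is to get it for all $0<p<1$ as well, and on the torus, using the atomic description from Definition \ref{def:mainatom} and the atomic characterization of $\Hp(\T^n)$ developed in the Appendix.

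\medskip

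The key steps, in order: (1) Write $\ProL$ in terms of Fourier multipliers and identify its components with periodic singular integral operators; the cleanest route is to transfer to $\R^n$ via periodization, i.e. realize the periodic kernel as $K(x) = \sum_{k\in\Z^n} K_{\R^n}(x+k)$ for the standard Riesz-type kernel on $\R^n$, so that the periodic operator inherits the size and smoothness (Hörmander-type) bounds of the Euclidean Calderón--Zygmund kernel, with the tail of the lattice sum contributing only a smooth, bounded convolution kernel. (2) By the atomic decomposition (Proposition \ref{prop:mainHpcomplete} and the atomic characterization in the Appendix), it is enough to show a uniform bound $\norm{Ta}_{\Hp}\lesssim 1$ for every $\Hp$-atom $a$, and then sum using the $p$-triangle inequality $\norm{\sum \lambda_j a_j}_{\Hp}^p \le \sum |\lambda_j|^p \norm{a_j}_{\Hp}^p$. (3) For a fixed atom $a$ supported in a ball $B = B(x_0,\eps)$: on the doubled ball $2B$ use the $L^2$ boundedness of $T$ together with $\norm{a}_{L^2}\le |B|^{1/2-1/p}$ to bound the local part — either directly if $2B$ is "small" in the sense of Definition \ref{def:smallball} (so the relevant atoms have cancellation and one can invoke Corollary \ref{cor:mainLinftyinHp} after checking $Ta$ essentially behaves like a multiple of an atom on $2B$), or via Proposition \ref{prop:mainHpboundnonvanishingatoms} when the ball is not small (here one must bound the low moments $\int_{2B} x^\alpha (Ta)\,dx$, which is where the non-vanishing-moment flexibility of the periodic theory is used). (4) Away from the support, on $(2B)^c$, use the standard Calderón--Zygmund/Hörmander estimate together with the vanishing moments of the atom (for small balls, where they are required) to get pointwise decay $|Ta(x)| \lesssim |B|^{1-1/p}\, \eps^{n(1/p-1)+1}/|x-x_0|^{n(1/p-1)+n+1}$ type bounds — more precisely, Taylor-expanding the kernel to order $N = \lfloor n(1/p-1)\rfloor$ against the moment conditions — and then either integrate directly against the maximal-function/grand-maximal-function definition of $\Hp$, or decompose the tail into annular pieces $2^j B$, normalize each to an atom, and control $\sum_j$ by the $p$-summability (here $p<1$ is crucial to make the geometric series converge, since each annulus contributes a factor like $2^{-j\varepsilon_0 p}$ with $\varepsilon_0 = n(1/p-1)+1-n(1/p-1) = \dots >0$).

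\medskip

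The main obstacle is step (3)--(4) on the periodic domain: one must carefully handle the interplay between the "small ball"/"large ball" dichotomy in the periodic definition of $\Hp$ and the moment conditions, ensuring that when an atom is large (radius comparable to the period) the operator output still lands in $\Hp$ without needing cancellation it doesn't have — this is precisely where Proposition \ref{prop:mainHpboundnonvanishingatoms} does the work, and one must check its hypotheses survive application of $T$ (in particular, that $Ta$ is still $L^\infty$ with a controlled norm on the relevant ball, which follows from $L^2\to L^2$ plus $L^\infty\to \mathrm{BMO}$ bounds for $T$, combined with a localization argument). A secondary technical point is justifying the kernel representation and the commutation of $T$ with the periodization/extension operations at the level of distributions; this is routine but must be stated. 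Once the single-atom estimate $\norm{Ta}_{\Hp}\lesssim 1$ is in place, the passage to general $f\in\Hp$ is immediate from the atomic decomposition and completeness (Proposition \ref{prop:mainHpcomplete}), since $T$ is continuous on the dense subspace of finite atomic combinations and $\Hp$ is complete, so $T$ extends boundedly, and on $f$ smooth it agrees with the classical Leray projector by Fourier analysis.
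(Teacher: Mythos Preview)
Your strategy is a valid one, but it is \emph{not} the paper's proof. The paper argues by \emph{transference} from $\Hp(\R^n)$: for atoms supported on small balls, it introduces dilated cutoffs $\Xi^{1/N}$, shows that $\Xi^{1/N}(\Psi_\zeta*\ProL a)^{\text{ext}}$ and $\Psi_\zeta*\ProL_{\R^n}(\Xi^{1/N}a^{\text{ext}})$ differ by an error $\mathcal{E}_N\to 0$ uniformly, and then invokes the (already known) boundedness of $\ProL_{\R^n}$ on $\Hp(\R^n)$, together with the observation that the $N^n$ periodic copies of $a$ inside $[-N/2,N/2]^n$, multiplied by the slowly varying polynomial $\prod_j(1-4x_j^2/N^2)_+$, remain genuine $\Hp(\R^n)$-atoms. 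This avoids any direct kernel analysis on the torus. Your direct Calder\'on--Zygmund/atomic approach is instead the content of the paper's Proposition \ref{prop:singularoperatorsHp}, which the paper proves separately as a ``complementary result''; so your route is perfectly legitimate, and the paper in fact supplies the machinery you would need (Lemma \ref{lem:kernelbounds}).

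That said, your execution of steps (3)--(4) has a real gap. You propose to treat $Ta\cdot\chi_{2B}$ as ``essentially a multiple of an atom on $2B$'' and invoke Corollary \ref{cor:mainLinftyinHp}, and to break the tail $Ta\cdot\chi_{(2B)^c}$ into annular pieces normalized to atoms. Neither works as stated: $Ta$ is not supported in $2B$, and more importantly its restrictions to $2B$ or to annuli $2^{j+1}B\setminus 2^jB$ have no reason to satisfy the moment conditions in Definition \ref{def:mainatom} (multiplication by characteristic functions destroys cancellation). The correct implementation --- and this is exactly what the paper does in the proof of Proposition \ref{prop:singularoperatorsHp} --- is to split the \emph{domain of integration} of $\int_{\T^n}(m_\Psi(K*_{\T^n}a))^p$, not $Ta$ itself: on $\bar B_a^*$ use H\"older and $L^2$-boundedness; on $D\setminus\bar B_a^*$ obtain the pointwise bound on $m_\Psi(K*_{\T^n}a)$ via the Taylor expansion of $K_\zeta(x-\cdot)$ and the vanishing moments of $a$. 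You do mention this maximal-function route as an alternative in step (4), and that is the branch you should take; the annular-atom decomposition would require a telescoping moment-correction that you have not supplied.
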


\begin{prop}\label{prop:H1inL2}
Let $f$ be such that {$\fint f=0$}, $\curl f\in \mathcal{H}^1$, and $\divergence f=0$. Then $f\in L^2$ and 
$$\norm{f}_{L^2}\lesssim \norm{\curl f}_{\mathcal{H}^1}.$$
\end{prop}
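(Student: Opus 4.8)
The plan is to exploit the structure of a curl-free-complement decomposition on $\T^2$. Since $\divergence f = 0$ and $\fint f = 0$, the field $f$ is determined by its vorticity $\w := \curl f$ via a Biot–Savart type formula: writing $f = \na^\perp \psi$ for the stream function $\psi$ with $\fint \psi = 0$, we have $-\Delta \psi = \w$, hence $\psi = (-\Delta)^{-1}\w$ and $f = \na^\perp (-\Delta)^{-1} \w$. Thus each component of $f$ is obtained from $\w$ by a Fourier multiplier of the form $i\xi_j / |\xi|^2 \cdot (\text{sign adjustments})$, i.e.\ essentially a Riesz transform composed with $(-\Delta)^{-1/2}$ up to a harmless zero-order piece; more precisely $\widehat{f}(k) = m(k)\widehat{\w}(k)$ with $m(k) = (k_2, -k_1)/|k|^2$ for $k \neq 0$, $m(0)=0$.

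The key step is then: on $\T^2$, convolution with (equivalently, the Fourier multiplier associated to) a homogeneous-degree-$(-1)$ symbol times a Riesz transform maps $\mathcal{H}^1(\T^2)$ into $L^2(\T^2)$. I would justify this in two moves. First, $\w \in \mathcal{H}^1$ implies $\w \in L^1$ with $\|\w\|_{L^1} \lesssim \|\w\|_{\mathcal{H}^1}$ (a standard atomic-decomposition fact, which on $\T^2$ also follows from the appendix's machinery); in particular $\widehat{\w}(0) = \fint \w = 0$ since $\w = \curl f$ with $f$ periodic, so the multiplier is applied only away from the origin and the zero mode causes no trouble. Second — and this is the crux — I invoke the fact that the periodic Riesz transforms map $\mathcal{H}^1(\T^2) \to \mathcal{H}^1(\T^2)$ boundedly (this is the Calderón–Zygmund theory on Hardy spaces; alternatively one cites Proposition~\ref{prop:mainPboundedHp}, whose proof surely goes through the same Riesz-transform boundedness), so $(-\Delta)^{-1/2}$ applied to each Riesz transform of $\w$ lands in the image of $\mathcal{H}^1$ under $(-\Delta)^{-1/2}$; and the Sobolev-type embedding $(-\Delta)^{-1/2}\mathcal{H}^1(\T^2) \hookrightarrow L^2(\T^2)$ in dimension $n=2$ holds because $\mathcal{H}^1 \hookrightarrow \dot{H}^{-1}$... no — more cleanly: the composite multiplier $m(k) = (k_2,-k_1)/|k|^2$ has $|m(k)| \sim |k|^{-1}$, which is exactly the critical scaling for $\mathcal{H}^1 \to L^2$ in two dimensions, matching the Hardy–Littlewood–Sobolev / fractional integration estimate on the torus restricted to the Hardy space (where it is valid despite failing on $L^1$).

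Concretely, I would prove the estimate via atomic decomposition: write $\w = \sum_j \lambda_j a_j$ with $\mathcal{H}^1$-atoms $a_j$ (using Definition~\ref{def:mainatom} with $p=1$, so each small-ball atom satisfies $\int a_j = 0$, $\|a_j\|_{L^\infty} \le |\bar B_{a_j}|^{-1}$) and $\sum_j |\lambda_j| \lesssim \|\w\|_{\mathcal{H}^1}$. For each atom I estimate $\|\na^\perp(-\Delta)^{-1} a_j\|_{L^2}$ by a size-times-cancellation argument: by the $L^2$-theory of $(-\Delta)^{-1}$ one has $\|\na^\perp(-\Delta)^{-1}a_j\|_{L^2} \lesssim \|(-\Delta)^{-1/2} a_j\|_{L^2} = \||k|^{-1}\widehat{a_j}\|_{\ell^2}$, and for a normalized atom supported in a ball of radius $r$ with mean zero this is $\lesssim 1$ uniformly in $r$ — the mean-zero condition kills the potentially divergent low-frequency contribution $|k|^{-1}$ near $k=0$, while the $L^\infty$-normalization and support bound control the high frequencies, exactly as in the classical proof that Riesz potentials of order $n/2$ are bounded $\mathcal{H}^1 \to L^2$. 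Summing, $\|f\|_{L^2} \lesssim \sum_j |\lambda_j| \lesssim \|\w\|_{\mathcal{H}^1}$. The main obstacle I anticipate is the careful handling of the periodic (rather than $\R^2$) setting — in particular making sure the large-ball atoms (for which no cancellation is assumed, per the appendix's convention) are still handled, which they are since such atoms are bounded with bounded support comparable to the whole torus and the multiplier is bounded on the nonzero modes, so their image is trivially in $L^2$ — and confirming that one may legitimately pass the multiplier inside the (possibly infinite) atomic sum, which is justified by the completeness of $\mathcal{H}^1$ (Proposition~\ref{prop:mainHpcomplete}) together with continuity of the multiplier operator on $\mathcal{H}^1$.
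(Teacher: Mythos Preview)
Your approach is correct but takes a genuinely different route from the paper. The paper argues by duality: it tests $f$ against an arbitrary $\varphi\in L^2$ with $\|\varphi\|_{L^2}\le 1$, Helmholtz-decomposes $\varphi=\curl g_1+\nabla g_2$ with $g_1,g_2\in H^1(\T^2)$, uses $\divergence f=0$ to kill the $\nabla g_2$ term, integrates by parts to obtain $\langle f,\varphi\rangle=\langle \curl f, g_1\rangle$, and then invokes the embedding $H^1(\T^2)\hookrightarrow \mathrm{BMO}=(\mathcal{H}^1)^*$ (valid since $W^{1,n}\hookrightarrow \mathrm{BMO}$) to conclude $|\langle f,\varphi\rangle|\lesssim \|\curl f\|_{\mathcal{H}^1}$. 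This is a three-line proof using only the $\mathcal{H}^1$--$\mathrm{BMO}$ duality and a standard Sobolev fact, with no atomic decomposition at all.

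Your argument instead proves directly that the Biot--Savart multiplier $m(k)=k^\perp/|k|^2$ maps $\mathcal{H}^1(\T^2)\to L^2(\T^2)$ via atomic decomposition --- essentially re-deriving the endpoint fractional integration estimate $I_1:\mathcal{H}^1\to L^2$ on the torus. This is a valid and more self-contained route (it does not import the $\mathrm{BMO}$ duality as a black box), and your handling of the big-ball atoms and the zero mode is correct. One small correction: in your last sentence, the justification for passing the operator inside the atomic sum is not ``continuity of the multiplier on $\mathcal{H}^1$'' but rather that the partial sums $\sum_{j\le N}\lambda_j\, m(D)a_j$ are Cauchy in $L^2$ (since $\sum_{j>N}|\lambda_j|\to 0$ and each term is uniformly bounded in $L^2$), together with continuity of $m(D)$ as a map $\mathcal{D}'\to\mathcal{D}'$ to identify the limit with $m(D)\w$. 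The paper's duality proof is shorter; your direct approach is more constructive and makes the role of the atom's mean-zero condition (killing the low-frequency divergence of $|k|^{-1}$) explicit.
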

\section{Correctors}
In this section we define the correctors that we use to decrease the error in each iteration. First of all, we fix the Hardy space parameter $0<p<1$ and choose $N=\lfloor 2(\frac{1}{p}-1)\rfloor$. 
\subsection{Spatial building blocks}
Let us briefly recall {the} construction of \cite{buckmodena2024compactly}.
Take a smooth function $\Phi$ supported on $(-\frac{1}{2},\frac{1}{2})$ with zero mean. Define $\varphi=\nabla^{(2N+3)}\Phi$. Observe that $\varphi$ has zero mean, it is supported on $(-\frac{1}{2},\frac{1}{2})$, and by rescaling $\Phi$ we ensure  $\int_\R \varphi^2 = 1$. We define 
$$\varphi^k_\mu(x)=\varphi_{\mu}(x-\frac{k}{16}|\xi_k|^2),$$
with $\xi_k$ as in \eqref{eq:dirs}. The shifts in the above definition will facilitate disjointness of supports of the upcoming building blocks. 

We introduce now the functions
\begin{equation*}
\begin{aligned}
\alpha_k(x)=&\frac{1}{(\lambda\mu_2)^{2N+3}}\varphi_{\mu_1}^k(\lambda x_1)\Phi_{\mu_2}(\lambda x_2),\\
v_k(x)=&\pa_2^{2N+3}\alpha_k(x)=\varphi_{\mu_1}^k(\lambda x_1)\varphi_{\mu_2}(\lambda x_2),\\
q_k(x)=&\frac{1}{\w}(\varphi^k_{\mu_1})^2(\lambda x_1)\varphi_{\mu_2}^2(\lambda x_2).
\end{aligned}
\end{equation*}
and the rotations
$$\Lambda_k:\R^2\to\R^2, x\to (\xi_k\cdot x,\xi_k^\perp\cdot x).$$
These preceding ingredients allow us to define the building blocks $W_k,Y_k\in C^\infty([0,T]\times \T^2; \R^2),$ $A_k\in C^\infty([0,T]\times \R^2;\R^{2\times 2})$ as follows
\begin{equation}\label{eq:BBdef}
\begin{aligned}
W_k(t,x)=&v_k(\Lambda_k(x-\w t\frac{\xi_k}{|\xi_k|^2}))\frac{\xi_k}{|\xi_k|},\\
Y_k(t,x)=&q_k(\Lambda_k(x-\w t\frac{\xi_k}{|\xi_k|^2}))\xi_k,\\
A_k(t,x)=&\frac{1}{|\xi_k|^{2N+3}}\alpha_k(\Lambda_k(x-\w t\frac{\xi_k}{|\xi_k|^2}))\frac{\xi_k}{|\xi_k|}\otimes \frac{\xi_k^\perp}{|\xi_k|}.
\end{aligned}
\end{equation}
Observe that since $\xi_k\in\N^2$, the above functions remain periodic.

\subsubsection{Properties of spatial building blocks}\label{sssec:sp}
The building blocks are $\lambda$ periodic, $\supp W_k=\supp Y_k=\supp A_k$ for any $k$, and $\supp Y_{k_1}\cap \supp Y_{k_2}=\emptyset$ for $k_1\neq k_2$ and $1\ll \lambda, \mu_1,\mu_2$. Furthermore,

\begin{enumerate}
    \item $\divergence(W_k\otimes W_k)=\pa_t Y_k$,
    \item $\int_{\T^2}W_k\otimes W_k(t,x)=\frac{\xi_k}{|\xi_k|}\otimes \frac{\xi_k}{|\xi_k|},$
    \item $\int_{\T^2}W_k(t,x)dx=0$,
    \item $\int_{\T^2}q_k=\frac{1}{\omega}$, $\int_{\T^2}Y_k=\frac{1}{\omega}\xi_k$,
\end{enumerate}
and the following estimates hold uniformly in $t$ for all $k\in \{1,2,3,4\}, l\in \N$, provided $\mu_2 \ge \mu_1$
    \begin{equation}\label{eq:BBestimates}
    \begin{aligned}
    \norm{\na^l W_k}_{L^s(\T^2)}\lesssim&\lambda^{l}\mu_1^{\frac{1}{2}-\frac{1}{s}}\mu_2^{l+\frac{1}{2}-\frac{1}{s}},\\
    \norm{\na^l Y_k}_{L^s(\T^2)}\lesssim&\w^{-1}\lambda^{l}\mu_1^{1-\frac{1}{s}}\mu_2^{l+1-\frac{1}{s}},\\
    \norm{\na^l A_k}_{L^s(\T^2)},\norm{\na^l A^T_k}_{L^s(\T^2)}\lesssim&\lambda^{l-(2N+3)}\mu_1^{\frac{1}{2}-\frac{1}{s}}\mu_2^{l-(2N+3)+\frac{1}{2}-\frac{1}{s}},\\
    \norm{\na^l \pa_t A_k}_{L^s(\T^2)},\norm{\na^l \pa_t A^T_k}_{L^s(\T^2)}\lesssim&\w\lambda^{l-(2N+2)}\mu_1^{\frac{3}{2}-\frac{1}{s}}\mu_2^{l-(2N+3)+\frac{1}{2}-\frac{1}{s}},\\
    \norm{\na^l \divergence A^T_k}_{L^s(\T^2)}\lesssim&\lambda^{l-(2N+2)}\mu_1^{\frac{3}{2}-\frac{1}{s}}\mu_2^{l-(2N+3)+\frac{1}{2}-\frac{1}{s}},\\
    \norm{\na^l \divergence \divergence  A_k}_{L^s(\T^2)}\lesssim&\lambda^{l-(2N+1)}\mu_1^{\frac{3}{2}-\frac{1}{s}}\mu_2^{l-(2N+2)+\frac{1}{2}-\frac{1}{s}}.
    \end{aligned}
    \end{equation}
The last two inequalities require an observation, that there appears at least one derivative with respect to first variable. The first four are direct consequence of the definitions of the building blocks. Detailed proofs of the properties of the building blocks can be found in \cite[Proposition 5.3, Proposition 5.4, Lemma 5.5]{buckmodena2024compactly}.
\subsection{Space-time perturbations}
Assume we have a solution $(u_0,\pi_0,R_0)$ to NSR \eqref{eq:NSR}. The trace of $R_0$ can be removed by redefining the pressure, thus we have another solution to \eqref{eq:NSR} $(u_0,\tilde \pi_0,\mathring{R}_0)$, denoting by $\mathring{R}_0$ the traceless part of $R_0$.
Recall that, thanks to Lemma \ref{lem:Nashdecomposition}, there are smooth functions $\Gamma_k$ with $|\Gamma_k|\leq 1$ such that for any matrix $A$ with $|A-I|<\frac{1}{8}$
$$A=\sum_{k=1}^4 \Gamma_k^2(A)\frac{\xi_k}{|\xi_k|}\otimes \frac{\xi_k}{|\xi_k|}.$$
In order to prove the last property of the Proposition \ref{prop:main} (the time support),  we take a smooth function $\Theta$.
In case we assume $R_0\equiv 0$ on $[0,t_0] \cup [1-t_0,1]$ for some $t_0>0$, we take
\begin{equation*}
\left\{
\begin{aligned}
\Theta(t)&\equiv 0 \quad &&\text{for\, }t \le t_0-\tau \text{ \,or\, } t \ge 1-t_0+\tau,\\
\Theta(t)&\equiv 1 \quad &&\text{otherwise,\, }
\end{aligned}
\right.
\end{equation*}
where $\tau>0$ is fixed in the statement of Proposition \ref{prop:main}. In case we don't assume $R_0\equiv 0$ on $[0,t_0]$, we take  $\Theta \equiv 1$.

For $\eps>0$ we define
\begin{align*}
\rho(t,x)=&10\sqrt{\eps^2+|\mathring{R}_0|^2},\\
a_k(t,x)=&\Theta(t)\rho^{\frac{1}{2}}\Gamma_k\left(I+\frac{\mathring{R}_0}{\rho}\right),
\end{align*} which yields the following decomposition of $\mathring{R}_0$
\begin{equation}\label{eq:adef}
\sum_k a^2 \frac{\xi_k}{|\xi_k|}\otimes \frac{\xi_k}{|\xi_k|}=\Theta^2(\rho I+ \mathring{R}_0).
\end{equation}
Now we implement the temporal concentration, which allows to deal satisfactorily with the dissipative part.
Let nonnegative $g\in C^\infty_c(0,1)$ be such that
\begin{equation}\label{eq:g_u2}
 \int_0^1g^2(t)dt=1.
\end{equation}
We define $g_\kappa(t)$ as the $1$-periodic extension of $\kappa^{\frac{1}{2}}g(\kappa t)$, so that 
\begin{equation}\label{eq:gestimates}
\norm{g_\kappa}_{L^p([0,1])}\le C \kappa^{\frac{1}{2}-\frac{1}{p}}.
\end{equation}
Let $h_\kappa$ be defined by 
$$h_\kappa(t)=\int_0^t (g^2_\kappa(s)-1) ds.$$
Note that $h_\kappa$ is  $1$-periodic (thanks to \eqref{eq:g_u2}), and bounded uniformly in $\kappa$.
For any $\nu\in \N$, the fast oscillating function $g_\kappa(\nu\cdot)$ has the same $L^p$-norm as $g_\kappa(\cdot)$, and
\begin{equation}\label{eq:h}
\pa_t(\nu^{-1}h_\kappa(\nu t))=g^2_\kappa(\nu t)-1.
\end{equation}

Our perturbation, aimed at decreasing $\mathring{R}_0$, will have three main components: (i) The principal perturbation
\begin{equation}\label{eq:up}
u^p=g_\kappa(\nu t)\sum_k a_kW_k,
\end{equation}
is chosen to reduce the $g^2_\kappa(\nu t) \mathring{R}_0 (t)$ part of the error, via $u^p \otimes u^p$. (ii) The time corrector 
\begin{equation}\label{eq:ut}
u^t=-g_\kappa^2(\nu t)\sum_ka_k^2(Y_k-\frac{1}{\omega}\xi_k),
\end{equation}
is chosen to cancel an unwanted quadratic error term generated by $u^p$. (iii) Finally, we need to cancel the remainder of $\mathring{R}_0$, i.e.\ $(1-g^2_\kappa(\nu t) )\mathring{R}_0(t)$. To this end, we choose the main component of the time-intermittency corrector to be 
\begin{equation}\label{eq:ug}
u^g (t)=   -\frac{h_\kappa(\nu t)}{\nu} \divergence \mathring{R}_0 . 
\end{equation}

These three main components of the perturbation need to be augmented by secondary components, rendering the entire perturbation divergence-free and with vanishing moments for concentrated parts (the latter needed to ensure sufficient bounds for $\Hp$). Therefore the full perturbation of $u_0$
is $w$, i.e.
$$u_1=u_0+w=u_0+w^p+w^t+w^{g},$$
with
\begin{align}
w^p&=g_{\kappa}(\nu t)\sum_k \nabla^\perp \curl \Delta^N [\divergence(a_k(A_k+A_k^T))],\label{eq:wp}\\
w^t&=\ProL(u^t)-\fint u^t dx,\\
w^g&=\ProL(u^g), \label{eq:wg}
\end{align}
where $\ProL$ is the Leray projector.
The unintuitive choice \eqref{eq:wp} ensures $\na w^p$ in $\Hp$, thanks to vanishing of high moments. It contains the intuitive choice  
$u^p$ (when all derivatives fall on $\Phi_{\mu_2} $ present in $A_k+A_k^T$ of \eqref{eq:wp}; the remaining terms are lower order, as can be seen in the formula for $u^c$), which is itself insufficient to deal with $\Hp$. We note that our choice implies $u_1-u_0\equiv 0$ on $[0,t_0-\tau] \cup [1-t_0+\tau,1].$

The splitting of the perturbation $w$ into the main components and the secondary ones is given in the following result
\begin{prop}\label{prop:defmaincomponentsandcorrectors}
It holds 
$$w^p=u^p+u^c, \quad w^t=u^t+u^{cc}, \quad w^{g}=u^{g}+u^{ccc},$$
with main components $u^p$, $u^t$, $u^{g}$, given by, respectively, 
\eqref{eq:up}, \eqref{eq:ut}, \eqref{eq:ug},
and secondary correctors equal to
\begin{align*}
u^c=&
g_\kappa(\nu t)\sum_k[\Delta^{N+1},a_kf_{k_1}]f_{k_2}\xi_k
-g_\kappa(\nu t)\sum_k \na \divergence(\Delta^N(a_k\divergence A_k))\\
&+g_\kappa(\nu t)\sum_k\na^\perp \curl \Delta^N(A_k\cdot \na a_k +\divergence(a_kA_k^T)),\\
u^{cc}=& - \na \Delta^{-1}\divergence u^t-\fint u^t dx,\\
u^{ccc}=
&-\na\Delta^{-1}\divergence u^g,
\end{align*}
where 
\begin{align*}
f_{k_1}=&\varphi_{\mu_1}^k(\lambda(\xi_k\cdot x-\omega t)),\\
f_{k_2}=& \frac{1}{|\xi_k|^{2N+3}(\lambda \mu_2)^{2N+2}}(\Phi')_{\mu_2}(\lambda\xi_k^\perp\cdot x).
\end{align*}
\end{prop}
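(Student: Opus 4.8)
The plan is to prove the three decompositions separately; all the computational content sits in the first one, $w^p = u^p + u^c$, while the other two are merely the definition of the Leray projector unwound.

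\emph{The identities for $w^t$ and $w^g$.} Recall that $\ProL = \Id - \na\Delta^{-1}\divergence$, which is well defined on any smooth periodic field because $\divergence F$ has zero mean on $\T^2$. Hence $w^g = \ProL u^g = u^g - \na\Delta^{-1}\divergence u^g$, i.e.\ $w^g = u^g + u^{ccc}$ with $u^{ccc} = -\na\Delta^{-1}\divergence u^g$; and $w^t = \ProL u^t - \fint u^t\,dx = u^t - \na\Delta^{-1}\divergence u^t - \fint u^t\,dx$, i.e.\ $w^t = u^t + u^{cc}$ with $u^{cc} = -\na\Delta^{-1}\divergence u^t - \fint u^t\,dx$. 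Nothing further is needed here.

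\emph{The identity for $w^p$.} I would chain together a few elementary facts. (1) The two‑dimensional identity $\na^\perp\curl F = \Delta F - \na\divergence F$ for a vector field $F$, verified componentwise with the paper's sign conventions. (2) The Leibniz rule $\divergence(a_k M) = M\na a_k + a_k\divergence M$ for a scalar $a_k$ and a matrix field $M$, applied to $M = A_k$ and $M = A_k^T$, which gives
\[
\divergence\big(a_k(A_k+A_k^T)\big) \;=\; a_k\divergence A_k \;+\; \big(A_k\cdot\na a_k + \divergence(a_k A_k^T)\big).
\]
Substituting this into $w^p = g_\kappa(\nu t)\sum_k\na^\perp\curl\Delta^N[\,\cdot\,]$ and then applying (1) to the summand coming from $a_k\divergence A_k$, one writes $w^p$ as the sum of three groups: $g_\kappa(\nu t)\sum_k\na^\perp\curl\Delta^N(A_k\cdot\na a_k + \divergence(a_k A_k^T))$, which is the last term of $u^c$; $-\,g_\kappa(\nu t)\sum_k\na\divergence\Delta^N(a_k\divergence A_k)$, which is the middle term of $u^c$; and $g_\kappa(\nu t)\sum_k\Delta^{N+1}(a_k\divergence A_k)$, which still must be shown to equal $u^p$ plus the first term of $u^c$.

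\emph{The core computation.} The key identity is $\divergence A_k = f_{k_1}f_{k_2}\xi_k$. It follows from the directional structure of $A_k$: when one differentiates $A_k = \tfrac{1}{|\xi_k|^{2N+3}}\alpha_k(\Lambda_k(x - \w t\tfrac{\xi_k}{|\xi_k|^2}))\tfrac{\xi_k}{|\xi_k|}\otimes\tfrac{\xi_k^\perp}{|\xi_k|}$ in $x$, only the derivative of $\alpha_k$ in its second slot survives (the first‑slot contribution is killed by $\xi_k\cdot\xi_k^\perp = 0$), and a direct comparison of the scaling constants in the definitions of $\alpha_k$, $f_{k_1}$, $f_{k_2}$ (recalling $\varphi = \na^{(2N+3)}\Phi$) produces exactly $f_{k_1}f_{k_2}\xi_k$. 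Consequently
\[
\Delta^{N+1}(a_k\divergence A_k) = \xi_k\,\Delta^{N+1}\big((a_kf_{k_1})f_{k_2}\big) = \xi_k\Big((a_kf_{k_1})\Delta^{N+1}f_{k_2} + [\Delta^{N+1},\,a_kf_{k_1}]f_{k_2}\Big),
\]
where the commutator term, summed over $k$ and multiplied by $g_\kappa(\nu t)$, is precisely the first term of $u^c$ (this is purely the definition of the commutator, so no cancellation of cross terms is required). Finally, since $f_{k_2}$ depends on $\xi_k^\perp\cdot x$ alone, $\Delta^{N+1}f_{k_2} = |\xi_k|^{2N+2}$ times its $(2N+2)$‑nd derivative in that direction, and differentiating $(\Phi')_{\mu_2}$ that many times yields $\varphi_{\mu_2}$ up to the corresponding powers of $\lambda\mu_2$; tracking the constants gives $f_{k_1}\Delta^{N+1}f_{k_2}\,\xi_k = v_k(\Lambda_k(x - \w t\tfrac{\xi_k}{|\xi_k|^2}))\tfrac{\xi_k}{|\xi_k|} = W_k$, so $g_\kappa(\nu t)\sum_k(a_kf_{k_1})\Delta^{N+1}f_{k_2}\,\xi_k = g_\kappa(\nu t)\sum_k a_kW_k = u^p$. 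Collecting the three groups yields $w^p = u^p + u^c$.

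\emph{Main obstacle.} There is no conceptual difficulty; the only real work is the bookkeeping of the scaling constants ($\lambda$, $\mu_1$, $\mu_2$, $|\xi_k|$, the exponent $2N+3$) in the two identities $\divergence A_k = f_{k_1}f_{k_2}\xi_k$ and $f_{k_1}\Delta^{N+1}f_{k_2}\,\xi_k = W_k$, together with carefully distributing the derivatives of $\na^\perp\curl = \Delta - \na\divergence$ across the three error groups. The one structural observation that makes the $\Delta^{N+1}$ manipulation clean is that $\na f_{k_1}\parallel\xi_k$ and $\na f_{k_2}\parallel\xi_k^\perp$ are orthogonal, so the mixed Leibniz terms involving $\na f_{k_1}\cdot\na f_{k_2}$ vanish; but, as noted, this is not even needed for the statement, since those terms are absorbed into the commutator $[\Delta^{N+1},a_kf_{k_1}]f_{k_2}$ by definition.
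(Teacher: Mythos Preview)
Your proposal is correct and follows essentially the same route as the paper: the identities for $w^t$ and $w^g$ are obtained exactly as you do, from $\ProL = \Id - \na\Delta^{-1}\divergence$, and your detailed expansion of $w^p$ (Leibniz splitting of $\divergence(a_k(A_k+A_k^T))$, the identity $\na^\perp\curl = \Delta - \na\divergence$, the key fact $\divergence A_k = f_{k_1}f_{k_2}\xi_k$, and the commutator extraction of $u^p$) is precisely the argument the paper cites from \cite[Proposition 6.1]{buckmodena2024compactly}, with the harmless extra factor $g_\kappa(\nu t)$ carried through. In effect you have reproduced the referenced computation; the constant bookkeeping you flag as the only obstacle checks out.
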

\bpf
The splitting of $w^p$ is proved in \cite[Proposition 6.1]{buckmodena2024compactly}. The only difference here is that we multiply the terms appearing in \cite{buckmodena2024compactly} with the function $g_\kappa$. The rest follows directly from  the identity $\mathbb{P}(u)=u-\na\Delta^{-1}(\divergence u)$.
\epf

\subsection{Estimates for correctors}\label{sec:estimatesperturbations}
The proper choice of the parameters involved in the perturbation $\tau, \eps, \mu_1,\mu_2,\kappa, \omega, \nu, \lambda$ will imply Proposition \ref{prop:main}. The exact dependence with respect to the solution $(u_0,\mathring{R}_0,\tilde{\pi}_0)$ will be not relevant. To shorten slightly the notation, let us choose always $\varepsilon \le \tau$, so that in constant only $\varepsilon$ features.

From \eqref{eq:adef}, which defines $a$, we have
\begin{align}\label{eq:abound}
\norm{a_k}_{C^l_{t,x}}\leq& C(\mathring{R}_0,\eps,l).
\end{align}
\subsubsection{Estimates in Lebesgue spaces}\label{subsec:estimateslebesgue}
To estimate $u^p$, we combine \eqref{eq:gestimates} with \eqref{eq:BBestimates} 
to get, for any  $r,s\in [1,\infty]$,

\begin{align*}
\norm{u^p}_{L^r_tL_x^s}\leq& C(\mathring{R}_0,\eps)\kappa^{\frac{1}{2}-\frac{1}{r}}\mu_1^{\frac{1}{2}-\frac{1}{s}}\mu_2^{\frac{1}{2}-\frac{1}{s}},\\
\norm{\na u^p}_{L^r_tL_x^1}\leq& C(\mathring{R}_0,\eps)\kappa^{\frac{1}{2}-\frac{1}{r}}\lambda\mu_1^{-\frac{1}{2}}\mu_2^{\frac{1}{2}},\\
\norm{u^p}_{L_{t,x}^2}\leq& 4\sqrt{10}(\|\mathring{R}_0\|_{L^1_{t,x}}^\frac{1}{2}+\eps^{\frac{1}{2}})+C(\mathring{R}_0,\eps )\lambda^{-\frac{1}{2}},
\end{align*}
where the last one requires a computation involving decorrelation, see \cite[Lemma 7.2]{buckmodena2024compactly}.
Similarly, from \eqref{eq:gestimates} and following the proof of \cite[Lemma 7.3]{buckmodena2024compactly}, we obtain for any $r,s\in [1,\infty]$ 

\begin{align*}
\norm{u^c}_{L^r_tL^s_{x}}\leq& C(\mathring{R}_0,\eps )\kappa^{\frac{1}{2}-\frac{1}{r}}\mu_1^{\frac{3}{2}-\frac{1}{s}}\mu_2^{-\frac{1}{2}-\frac{1}{s}},\\
\norm{\na u^c}_{L^r_tL^1_x}\leq& C(\mathring{R}_0 ,\eps )\kappa^{\frac{1}{2}-\frac{1}{r}}\lambda\mu_1^{-\frac{1}{2}}\mu_2^{\frac{1}{2}}.
\end{align*}
For the gradient estimate, we have also used that $u^c=w^p-u^p$, the triangle inequality, and \eqref{eq:BBestimates}.

We now bound the temporal perturbation $w^t$. The Leray projector $\ProL$ is bounded from $L^s$ to $L^s$  for any $1<s<\infty$, so for bounding $w^t$ it suffices to bound $u^t-\fint u^t$. From \eqref{eq:BBestimates} and \eqref{eq:gestimates}, we bound for any $r\in [1,\infty]$ and $s\in (1,\infty)$
\begin{align*}
\norm{w^t}_{L^r_tL_x^s}\leq& C(\mathring{R}_0,\eps ,s )\kappa^{1-\frac{1}{r}}(\mu_1^{1-\frac{1}{s}}\mu_2^{1-\frac{1}{s}}\omega^{-1}+\omega^{-1}),\\
\norm{\na w^t}_{L^r_tL_x^s}\leq& C(\mathring{R}_0,\eps,s)\kappa^{1-\frac{1}{r}}(\lambda\mu_1^{1-\frac{1}{s}}\mu_2^{2-\frac{1}{s}}\omega^{-1}+\omega^{-1}).
\end{align*}
Finally, we estimate the time-intermittency corrector $w^g$. Since $L^1(\T)\subset L^p(\T)$ continuously for any $p>1$, we bound for any $r\in [1,\infty]$ and $s\in [1,\infty)$
\begin{equation}\label{eq:wgbounds}
\begin{aligned}
\norm{w^g}_{L_{t}^rL^s_x}&\leq C(\mathring{R}_0,s)\nu^{-1}, \\
\norm{\na w^g}_{L_{t}^rL^s_x}&\leq C(\mathring{R}_0,s)\nu^{-1}.  
\end{aligned}
\end{equation}

\subsubsection{Estimates of the gradient in Hardy spaces}\label{sec:estimatescurl}
In this subsection we estimate the gradient of the perturbation $w$ in $\Hp$. 

The spatial structure of $w^p$ is analogous to the structure from \cite{buckmodena2024compactly}, but we have a multiplicative factor $g_\kappa$, and we work on $\T^2$ instead of $\R^2$, which requires certain modifications. Therefore, we include the proof of the estimates. 

By \eqref{eq:wp}, we have 
$$\na w^p=g_{\kappa}(\nu t)\sum_k \na \nabla^\perp \curl \Delta^N [\divergence(a_k(A_k+A_k^T))].$$
From \eqref{eq:BBdef}, we observe that $a_k(A_k+A_k^T)$ is supported on $\lambda^2$-many small, disjoint balls of radius $\lambda^{-1}\mu_1^{-1}$. 
Since there is a derivative of order $2N+4$ acting on it, and recalling Definition \ref{def:mainatom}, we can apply Corollary \ref{cor:mainLinftyinHp} to each of these disjoint parts. Then, Proposition \ref{prop:mainHpcomplete} gives

\begin{align*}
\norm{\na w^p(t)}_{\Hp_x}^p\lesssim \frac{\lambda^2}{\lambda^2\mu_1^2}\norm{g_\kappa a_k(A_k+A_k^T)}_{W^{2N+4,\infty}}^p\lesssim C(\mathring{R}_0,\eps  )\frac{1}{\mu_1^2}\kappa^{\frac{p}{2}}\lambda^p \mu_1^{\frac{p}{2}}\mu_2^\frac{3p}{2},
\end{align*}
where we have used \eqref{eq:gestimates}, \eqref{eq:abound} and \eqref{eq:BBestimates}.
Thus, we have that $\na w^p\in \Hp$ and 
\begin{equation}\label{eq:nawpHp}
\norm{\na w^p}_{C_t\Hp_x}\leq C(\mathring{R}_0,\eps  )\kappa^{\frac{1}{2}}\lambda \mu_1^{\frac{1}{2}-\frac{2}{p}}\mu_2^\frac{3}{2}.    
\end{equation}
To control $\na w^t$, we use Corollary \ref{prop:mainPboundedHp} to deduce
$$\norm{\na w^t}_{C_t\Hp_x}\lesssim \norm{\na u^t}_{C_t\Hp_x}\lesssim \norm{g_\kappa^2(Y_k-\frac{\xi_k}{\omega })\na (a_k^2)}_{C_t\Hp_x}+\norm{g_\kappa^2 a_k^2 \na Y_k}_{C_t\Hp_x}.$$ 
For the first term, we use Corollary \ref{cor:mainhardyembedding} to bound 
$$\norm{g_\kappa^2(Y_k-\frac{\xi_k}{\omega })\na (a_k^2)}_{C_t\Hp_x}\leq \norm{g_\kappa^2(Y_k-\frac{\xi_k}{\omega })\na (a_k^2)}_{C_t L^2_x}\leq C(\mathring{R}_0,\eps )\kappa\omega^{-1}\mu_1^{\frac{1}{2}}\mu_2^{\frac{1}{2}}.$$
To bound the second term, we note that we can write  $g_\kappa^2a_k^2\na Y_k=\sum_{i=1}^{\lambda^2}\theta_{i}$, with each $\theta_{i}$ supported in a ball of radius $\lambda^{-1}\mu_1^{-1}$, with balls mutually disjoint. We denote by $\bar{B}_{\theta_i}\subset \R^n$ the representative ball associated to $\theta_i$, as in Proposition \ref{prop:mainHpboundnonvanishingatoms}. Using the estimates for building blocks \eqref{eq:BBestimates} and Proposition \ref{prop:mainHpboundnonvanishingatoms}, we deduce 
$$\norm{\theta_i}^p_{C_t\Hp_x}\leq C(\mathring{R}_0,\eps )\big(\kappa^p\lambda^{-2+p}\mu_1^{-2+p}\mu_2^{2p}\omega^{-p}+|\log{(\lambda \mu_{1})}|\max_{|\alpha|\leq N}\big|\int_{\bar{B}_{\theta_i}} x^\alpha \theta_i(x) dx \big|^p\big).$$
Since the supports of $\theta_i$ are disjoint, we can write
$$\int_{\bar{B}_{\theta_i}} x^\alpha \theta_i(x) dx=-\int_{\bar{B}_{\theta_i}} \na(x^\alpha g_\kappa^2a_k^2 )Y_k dx,$$
which implies 
$$\max_{|\alpha|\leq N}\big|\int_{\bar{B}_{\theta_i}} x^\alpha \theta_i(x) dx \big|^p\leq C(\mathring{R}_0,\eps )\kappa^p\norm{Y_k}_{L^1}^p.$$
We use \eqref{eq:BBestimates} to deduce 
$$\norm{\theta_i}^p_{C_t\Hp_x}\leq C(\mathring{R}_0,\eps )\big(\kappa^p\lambda^{-2+p}\mu_1^{-2+p}\mu_2^{2p}\omega^{-p}+|\log{(\lambda \mu_{1})}|\kappa^p\omega^{-p}\big),$$
which implies
$$\norm{g_\kappa^2a_k^2\na Y_k}^p_{C_t\Hp_x}\lesssim C(\mathring{R}_0,\eps, p)\big(\kappa^p\lambda^{p}\mu_1^{-2+p}\mu_2^{2p}\omega^{-p}+|\log{(\lambda \mu_{1})}|\kappa^p\omega^{-p}\lambda^{2}\big),$$
so 
\begin{align*}
\norm{\na w^t}_{C_t\Hp_x}\leq C(\mathring{R}_0,\eps )\big(\kappa\omega^{-1}\mu_1^{\frac{1}{2}}\mu_2^{\frac{1}{2}}+\kappa\lambda\mu_1^{-\frac{2}{p}+1}\mu_2^{2}\omega^{-1}+|\log{(\lambda \mu_{1})}|^{\frac{1}{p}}\kappa\omega^{-1}\lambda^{\frac{2}{p}}\big).
\end{align*}
Finally, Remark \ref{cor:mainLinftyinHp} and \eqref{eq:wgbounds} gives 
$$\norm{\nabla w^{g}}_{C_t\Hp}\leq C(\mathring{R}_0)\nu^{-1}.$$

\section{The new error}\label{sec:newerrorR}
In this section we define and estimate $R_1$. Recall that $u_0$ solves \eqref{eq:NSR} with $R_0$, and that $u_1=u_0+w$. Therefore
\begin{equation}\label{eq:newR1p1}
\begin{aligned}
-\divergence\mathring{R}_1=&\divergence(u_0\otimes w+w\otimes u_0) & =:\divergence R^{\textnormal{lin},1}\\
&+\divergence(u^p\otimes(w-u^p))+\divergence((w-u^p)\otimes u^p)&=:\divergence R^{\textnormal{lin},2}\\
&+\divergence((w-u^p)\otimes (w-u^p))&=:\divergence R^{\textnormal{lin},3}\\
&+\divergence(\na w {+(\na w)^T}) &=:\divergence R^{\Delta}\\
&+\pa_t w+\divergence(u^p\otimes u^p-\mathring{R}_0) &=:E^Q\\
&+\na(p_1-\tilde p_0).
\end{aligned}
\end{equation}

\subsection{Linear errors $R^{\textnormal{lin}}$}
Using estimates from section \ref{sec:estimatesperturbations} we have, since $w = u^p+ u^c+w^t+w^{g}$
$$
\begin{aligned}
\norm{R^{\text{lin},1}}_{L^1_{t,x}}\leq &2\norm{u_0}_{C_t L^2_x}(\norm{u^c}_{L^1_tL^2_x}+\norm{w^{t}}_{L^1_tL^2_x}+\norm{w^{g}}_{L^1_tL^2_x})+2\norm{u_0}_{C_t L^\infty_x}\norm{u^p}_{L^1_tL^1_x}\\
\leq& C(\mathring{R}_0,\eps )\left(\kappa^{-\frac{1}{2}}\frac{\mu_1}{\mu_2}+\frac{\mu_1^\frac{1}{2}\mu_2^\frac{1}{2}}{\omega} +\omega^{-1}+\nu^{-1}+\kappa^{-\frac{1}{2}}\mu_1^{-\frac{1}{2}}\mu_2^{-\frac{1}{2}}\right), \\
\norm{R^{\text{lin},2}}_{L^1_{t,x}}\leq &2\norm{u_p}_{L^2_t L^2_x}(\norm{u^c}_{L^2_tL^2_x}+\norm{w^{t}}_{L^2_tL^2_x}+\norm{w^{g}}_{L^2_tL^2_x})\\
\leq& C(\mathring{R}_0,\eps )\left(\frac{\mu_1}{\mu_2}+\frac{\kappa^\frac{1}{2}\mu_1^\frac{1}{2}\mu_2^\frac{1}{2}}{\omega} +\kappa^\frac{1}{2}\omega^{-1}+\nu^{-1}\right),\\
\norm{R^{\text{lin},3}}_{L^1_{t,x}}&\le C \norm{u^c}_{L^2_tL^2_x}^2+\norm{w^{t}}_{L^2_tL^2_x}^2+\norm{w^{g}}_{L^2_tL^2_x}^2\\
&\leq C(\mathring{R}_0,\eps )\left(\frac{\mu_1^2}{\mu_2^2}+\frac{\kappa\mu_1\mu_2}{\omega^2}+\kappa\omega^{-2}+\nu^{-2}\right),
\end{aligned}
$$

\subsection{Dissipative error $R^{\Delta}$}
The estimates from section \ref{sec:estimatesperturbations} give, for any $1<s<\infty$,
$$
\norm{R^{\Delta}}_{L^1_{t,x}}\leq C(\mathring{R}_0,\eps ,s)\Big(\kappa^{-\frac{1}{2}}\lambda\mu_1^{-\frac{1}{2}}\mu_2^\frac{1}{2}
+\lambda\mu_1^{1-\frac{1}{s}}\mu_2^{2-\frac{1}{s}}\omega^{-1}+\omega^{-1}+\nu^{-1}\Big).
$$

\subsection{Error $E^{Q}$} 
Let us first write
\[
\mathring{R}_0 = g^2_\kappa(\nu\cdot) \mathring{R}_0 + (1-g^2_\kappa(\nu \cdot) )\mathring{R}_0 .
\]
Using this, now we split $E^{Q}$ of \eqref{eq:newR1p1}
as follows
\begin{equation}\label{eq:balanceerrorR0}
\begin{aligned}
E^{Q}=&\pa_t(u^t-\fint u^t)+\divergence(u^p\otimes u^p-g^2_\kappa(\nu\cdot) \mathring{R}_0)\\
&+\pa_t u^{g} + (1-g^2_\kappa(\nu \cdot))\divergence(\mathring{R}_0)\\
&+\pa_t w^p\\
&-\pa_t \na \Delta^{-1}\divergence u^t+\pa_t u^{ccc}.
\end{aligned}
\end{equation}
Let us immediately observe, in view of Proposition \ref{prop:defmaincomponentsandcorrectors}, that the last line of \eqref{eq:balanceerrorR0} contains terms which are gradients, so we can define
$$\na\pi_1=-\na \pa_t\Delta^{-1}\divergence u^t+\pa_t u^{ccc}.$$
This term can be included into the new pressure, so we don't need to estimate it. 
\subsubsection{R.h.s. of the first line of \eqref{eq:balanceerrorR0}}
Using the properties of the building blocks from section \ref{sssec:sp} and the definition \eqref{eq:up}of $u^p$, we have for the r.h.s. of the first line of \eqref{eq:balanceerrorR0}
\begin{align*}
\pa_t(u^t-\fint u^t)+\divergence(u^p\otimes u^p-g^2_\kappa(\nu\cdot) \mathring{R}_0)= T -\pa_t\fint u^t+ \nabla \pi_2
\end{align*}
where 
\begin{align*}
T &:= -\sum_k\pa_t (g^2_\kappa a_k^2) \left(Y_k-\frac{1}{\omega}\xi_k\right) 
+g_\kappa^2\sum_k \left(W_k\otimes W_k-\frac{\xi_k}{|\xi_k|}\otimes \frac{\xi_k}{|\xi_k|} \right)\cdot \na a_k^2,\\
\nabla \pi_2 &:= \divergence g^2_\kappa (\rho I) = \nabla ( g^2_\kappa \rho).
\end{align*} The aim of our construction was to remove fast means, as apparent above. Thus, thanks to Lemma \ref{lem:antidivergenceoperators}, we can invert divergence. Therefore
\begin{align*}
T-\fint T= \divergence R^Y +  \divergence R^Q,
\end{align*}
where
\begin{align*}
R^{Y}=&-\sum_k \fancyR\left(\pa_t (g_\kappa^2 a_k^2), Y_k-\frac{1}{\omega}\xi_k\right),\\
R^{Q}=&g_\kappa^2\sum_k\tilde{\fancyR}\left(\na a_k^2, W_k\otimes W_k-\frac{\xi_k}{|\xi_k|}\otimes \frac{\xi_k}{|\xi_k|}\right).
\end{align*}
Using Lemma \ref{lem:antidivergenceoperators} and building blocks estimates \eqref{eq:BBestimates}, we see that
\begin{align*}
\norm{R^{Y}}_{L^1_{t,x}}\leq& \frac{C(\mathring{R}_0,\eps,\tau  )\nu \kappa}{\omega \lambda},\\
\norm{R^{Q}}_{L^1_{t,x}}\leq& \frac{C(\mathring{R}_0,\eps)}{\lambda}.
\end{align*}
\subsubsection{Second line of \eqref{eq:balanceerrorR0}}
Using \eqref{eq:h} and \eqref{eq:ug} we have 
\[
\pa_t u^{g} + (1-g^2_\kappa(\nu \cdot))\divergence(\mathring{R_0} ) =-\frac{h_\kappa}{\nu}\divergence(\pa_t\mathring{R}_0)=\divergence R^g,
\]
where we have defined 
$$R^g=-\frac{h_\kappa}{\nu}\pa_t\mathring{R}_0.$$
Since $h_\kappa$ is bounded, we see that
$$\norm{R^{g}}_{L^1_{t,x}}\leq \frac{C(\mathring{R}_0)}{\nu}.$$

\subsubsection{Third line of \eqref{eq:balanceerrorR0}}
We use formula \eqref{eq:wp} and the identity $\na^\perp\curl=\Delta-\na\divergence$ to obtain, since $\divergence$ and $\Delta$ commute, 
\begin{align*}
\pa_t w^p=&\sum_k \divergence (\Delta^{N+1}(\nu\kappa g'_\kappa a_\kappa(A_k+A_k^T)+g_\kappa\pa_t (a_\kappa(A_k+A_k^T)))) \\
&-\na \divergence \Delta^N\divergence\pa_t(g_\kappa a_\kappa(A_k+A_k^T))\coloneqq \divergence R^{\text{time}}+\na \pi_3,
\end{align*}
where 
$$R^{\text{time}}=\sum_k \Delta^{N+1}(\nu\kappa g'_\kappa a_\kappa(A_k+A_k^T)+g_\kappa\pa_t (a_\kappa(A_k+A_k^T))).$$
Using \eqref{eq:BBestimates} and \eqref{eq:gestimates}, we bound
\begin{equation}
\norm{R^{\text{time}}}_{L^1_{t,x}}\leq C(\mathring{R}_0,\eps,\tau  )\Big( \frac{\nu \kappa^\frac{1}{2}}{\lambda\mu_1^\frac{1}{2}\mu_2^\frac{3}{2}} +  \frac{\omega \mu_1^\frac{1}{2}}{\kappa^{\frac{1}{2}}\mu_2^\frac{3}{2}}\Big).
\end{equation}
\subsubsection{Final formula for error $E^{Q}$ }
Recall \eqref{eq:balanceerrorR0}. Putting together all the considerations concerning its r.h.s., we have
\begin{equation}\label{eq:EQfinalformula}
\begin{aligned}
E^{Q} 
=&\divergence (R^{Y}+R^{Q}+
R^g+R^{\text{time}})\\
&+\na (\pi_2+\pi_3)+\fint T-\fint \pa_t  u_t.
\end{aligned}
\end{equation}
We note that $E^Q$ has zero mean (in view of \eqref{eq:newR1p1}, using the fact that the perturbation $w$ has zero mean). Thus, integrating equation \eqref{eq:EQfinalformula}, we see that 
$$\fint T-\fint \pa_t  u_t=0,$$
so 
\begin{equation*}
\begin{aligned}
E^{Q} 
=&\divergence (R^{Y}+R^{Q}+
R^g+R^{\text{time}})+\na (\pi_2+\pi_3).
\end{aligned}
\end{equation*}

To conclude the section, we define
\begin{align*}
R_1=&-(R^{\text{lin,1}}+R^{\text{lin,2}}+R^{\text{lin,3}}+R^{\Delta}+R^{Y}+R^{Q}+R^g+R^{\text{time}}),\\
\na \pi_1=&\na \tilde{\pi}_0+\na\pi_1+\na\pi_2+\na\pi_3+\frac{1}{2}\na \text{tr}R_1.
\end{align*}
With this, $(u_1,\pi_1,R_1)$ is a solution to the NS-Reynolds \eqref{eq:NSR}.

\section{Conclusion of proof of main proposition}
Having constructed the new solution to \eqref{eq:NSR} $(u_1,\pi_1,R_1)$ in the previous sections, to prove Proposition \ref{prop:main} it remains to show that the new error is small.

Recall that the Proposition fixes $\sigma\in(0,1)$ (of $W^{\sigma,1}$), $p \in (0,1)$ (of $\Hp$), $\tau>0$ (parameter capturing increase in time support), and $\delta>0$ (target smallness).

Our aim now is to choose the parameters $\eps, \mu_1,\mu_2,\kappa, \omega, \nu, \lambda, s $ so that \eqref{eq:mainpropR},  \eqref{eq:mainpropu},  \eqref{eq:mainpropcurl} {and \eqref{eq:mainpropCL}} of
Proposition \ref{prop:main} hold. We will base the parameters on $\lambda$, i.e. let
$$\begin{aligned}
\mu_1=&\lambda^\alpha,\\
\mu_2=&\lambda^{\alpha+a},\\
\kappa^{1/2}=&\lambda^\beta,\\
\omega=&\lambda^{\alpha+b}, \\
\nu=& \lambda^{\gamma},
\end{aligned}$$
and choose 
\[
\eps=\norm{\mathring{R}_0}_{L^1_{t,x}} \wedge \tau.
\]
It will suffice to take\footnote{There is certain flexibility in choices of the parameters, we provide one concrete choice for clarity.}
\begin{align*}
\beta=4,\,
a=5,\,
b=11,\,
\gamma=1,
\end{align*}
$\alpha$ big enough so that
\begin{align*}
\alpha>&\frac{2}{p},\\
2\alpha\left(1-\frac{1}{p}\right)<&-\frac{25}{2},\\
\alpha(1-\sigma)>&\beta+\sigma+a(\sigma-\frac{1}{2}),\\
\alpha\frac{1-\sigma^2}{3+\sigma}>&2\beta+\sigma+a(\frac{1-\sigma}{3+\sigma}+\sigma)-b,
\end{align*}
and $s>1$ close to one, so that 
$$-5+(5+2\alpha)\big(1-\frac{1}{s}\big)<0. $$
With these choices, we will show that all errors terms are bounded by negative powers of $\lambda$ times a constant, which depends (if it is not the explicit $M$) only on $\mathring{R}_0, \varepsilon$. Thus, taking $\lambda\gg 1$ will make the errors as small as we want, yielding {\eqref{eq:mainsubequations}}. 
The $L_{x,t}^2$ error estimate will require precise control over its constant.

\subsection{Bound of $\norm{u_1-u_0}_{L^2_{t,x}}$}
Using the estimates from Section \ref{subsec:estimateslebesgue}, we bound
\begin{equation*}
\begin{aligned}
\norm{u_1-u_0}_{L^2_{t,x}}=& \norm{u^p+u^c+w^t+w^g}_{L^2_{t,x}} \\
\leq& 4\sqrt{10}(\norm{\mathring{R}_0}^\frac{1}{2}_{L^1_{t,x}}+\eps^\frac{1}{2})+C(\mathring{R}_0,\eps )(\lambda^{-\frac{1}{2}}+\mu_1\mu_2^{-1}+\kappa^{\frac{1}{2}} \mu_1^\frac{1}{2}\mu_2^\frac{1}{2}\omega^{-1}+\omega^{-1}+\nu^{-1})\\
\le & 8\sqrt{10}\norm{\mathring{R}_0}^\frac{1}{2}_{L^1_{t,x}}+C(\mathring{R}_0)(\lambda^{-\frac{1}{2}}+\lambda^{-5}+\lambda^{-\frac{9}{2}}+\lambda^{-(\alpha+11)}+\lambda^{-1}).
\end{aligned}
\end{equation*}
Thus for  $\lambda\gg 1$, it holds
$$\norm{u_1-u_0}_{L^2_{t,x}}\le M \norm{\mathring{R}_0}^\frac{1}{2}_{L^1_{t,x}} + \delta,$$
with $M = 8\sqrt{10}$,
giving \eqref{eq:mainpropu}.

\subsection{Bound of $\norm{\na{(u_1-u_0)}}_{C_t\Hp_x}$}

Using the estimates from Section \ref{sec:estimatescurl}, we bound
\begin{align*}
\norm{\na{(u_1-u_0)}}_{\Hp_x}=& 
\norm{w^p+w^t+w^g}\\
\lesssim&
\kappa^\frac{1}{2}\lambda\mu_1^{\frac{1}{2}-\frac{2}{p}}\mu_2^\frac{3}{2}+\kappa\omega^{-1}\mu_1^{\frac{1}{2}}\mu_2^{\frac{1}{2}}+\kappa\lambda\mu_1^{-\frac{2}{p}+1}\mu_2^{2}\omega^{-1}\\
&+|\log{(\lambda \mu_{1})}|^{\frac{1}{p}}\kappa\omega^{-1}\lambda^{\frac{2}{p}}+\nu^{-1}\\
=& \lambda^{2\alpha(1-\frac{1}{p})+\frac{25}{2}}+\lambda^{-\frac{1}{2}}+\lambda^{2\alpha(1-\frac{1}{p})+8}+(\alpha+1)^\frac{1}{p}\log(\lambda)^\frac{1}{p}\lambda^{-\alpha-3+\frac{2}{p}}+\lambda^{-1},
\end{align*}
uniformly in $t$. 
Thus, taking $\lambda\gg 1$ gives $\norm{\na{(u_1-u_0)}}_{C_t\Hp_x}<\delta$.

\subsection{Bound of $\norm{u_1-u_0}_{C_tW^{\sigma,1}_{x}}$}
We fix $0<\sigma<1$. Using the estimates from Section \ref{subsec:estimateslebesgue} and that $\mu_2\gg \mu_1$, we obtain
\begin{equation*}
\begin{aligned}
\norm{u_1-u_0}_{W^{\sigma,1}_{x}}=& \norm{u_1-u_0}^{1-\sigma}_{L^1_x}\norm{\na(u_1-u_0)}^{\sigma}_{L^1_x} \\
\leq& \norm{w^p}^{1-\sigma}_{L^1_x}\norm{\na w^p}^{\sigma}_{L^1_x}+\norm{w^t}^{1-\sigma}_{L^\frac{3+\sigma}{2+2\sigma}_x}\norm{\na w^t }^{\sigma}_{L^\frac{3+\sigma}{2+2\sigma}_x}+\norm{w^g}^{1-\sigma}_{L^1_x}\norm{\na w^g }^{\sigma}_{L^1_x} \\
\lesssim & \kappa^\frac{1}{2}\lambda^\sigma\mu_1^{-\frac{1}{2}}\mu_2^{-\frac{1}{2}+\sigma}+
\kappa\omega^{-1}+\kappa\lambda^\sigma\mu_1^{\frac{1-\sigma}{3+\sigma}}\mu_2^{\frac{1-\sigma}{3+\sigma}+\sigma}\omega^{-1}+\nu^{-1}\\
=& \lambda^{-\alpha(1-\sigma)+\beta+\sigma+a(\sigma-\frac{1}{2})}
+\lambda^{2\beta-\alpha-b}
+\lambda^{-\alpha\frac{1-\sigma^2}{3+\sigma}+2\beta+\sigma+a(\frac{1-\sigma}{3+\sigma}+\sigma)-b}
+\lambda^{-\gamma},
\end{aligned}
\end{equation*}
uniformly in $t$. Thus, taking $\lambda\gg 1$ gives $\norm{u_1-u_0}_{C_tW^{\sigma,1}_x}<\delta$.

\color{black}

\subsection{Bound of $\norm{R_1}_{L^1_{t,x}}$}
Now we use the estimates from Section \ref{sec:newerrorR}. We bound each component of $R_1=-(R^{\text{lin,1}}+R^{\text{lin,2}}+R^{\text{lin,3}}+R^{\Delta}+R^{Y}+R^{Q}+R^g+R^{\text{time}})$ separately.

\begin{align*}
\norm{R^{\text{lin,1}}}_{L^1_{t,x}}\lesssim &\kappa^{-\frac{1}{2}}\frac{\mu_1}{\mu_2}+\frac{\mu_1^\frac{1}{2}\mu_2^\frac{1}{2}}{\omega} +\omega^{-1}+\nu^{-1}+\kappa^{-\frac{1}{2}}\mu_1^{-\frac{1}{2}}\mu_2^{-\frac{1}{2}}\\
=&\lambda^{-9}+\lambda^{-\frac{17}{2}}+\lambda^{-(\alpha+11)}+\lambda^{-1}+\lambda^{-(\alpha+\frac{13}{2})},    \\
\norm{R^{\text{lin,2}}}_{L^1_{t,x}}\lesssim &\frac{\mu_1}{\mu_2}+\frac{\kappa^\frac{1}{2}\mu_1^\frac{1}{2}\mu_2^\frac{1}{2}}{\omega} +\kappa^\frac{1}{2}\omega^{-1}+\nu^{-1}=\lambda^{-5}+\lambda^{-\frac{9}{2}}+\lambda^{-(\alpha+7)}+\lambda^{-1},\\
\norm{R^{\text{lin,3}}}_{L^1_{t,x}}\lesssim &\frac{\mu_1^2}{\mu_2^2}+\frac{\kappa\mu_1\mu_2}{\omega^2}+\kappa\omega^{-2}+\nu^{-2}=\lambda^{-10}+\lambda^{-9}+\lambda^{-2(\alpha+7)}+\lambda^{-2},\\
\norm{R^{\Delta}}_{L^1_{t,x}}\lesssim &\kappa^{-\frac{1}{2}}\lambda\mu_1^{-\frac{1}{2}}\mu_2^\frac{1}{2}
+\lambda\mu_1^{1-\frac{1}{s}}\mu_2^{2-\frac{1}{s}}\omega^{-1}+\omega^{-1}+\nu^{-1}\\
=&\lambda^{-\frac{1}{2}}+\lambda^{-5+(5+2\alpha)(1-\frac{1}{s})}+\lambda^{-(\alpha+11)}+\lambda^{-1},\\
\norm{R^{Y}}_{L^1_{t,x}}\lesssim & \frac{\nu \kappa}{\omega \lambda}=\lambda^{-(\alpha+3)},\\
\norm{R^{Q}}_{L^1_{t,x}}\lesssim &\lambda^{-1},\\
\norm{R^{g}}_{L^1_{t,x}}\lesssim &\lambda^{-1},\\
\norm{R^{\text{time}}}_{L^1_{t,x}}\lesssim & \frac{\nu \kappa^\frac{1}{2}}{\lambda\mu_1^\frac{1}{2}\mu_2^\frac{3}{2}} +  \frac{\omega \mu_1^\frac{1}{2}}{\kappa^{\frac{1}{2}}\mu_2^\frac{3}{2}}=\lambda^{-2\alpha-\frac{7}{2}}+\lambda^{-\frac{1}{2}}.
\end{align*}
All the exponents of $\lambda$ are strictly negative, so $\lambda\gg 1$ yields $\norm{R_1}_{L^1_{t,x}}<\delta$, concluding the proof of estimates in Proposition \ref{prop:main}.

\appendix
\section{Real Hardy spaces $\Hp(\T^n$)}\label{sec:Hardy}
Since we could not find a suitable reference for Hardy spaces on $\T^n$, we develop  the relevant theory.
The proofs ideas are mainly adapted from \cite{follandsteinhardy,stein1993harmonic}.

The properties of $\Hp(\T^n)$ are analogue to the properties of $\Hp(\R^n)$, but with several useful advantages of the periodic (compact) setting over the full-space case. These are:
\begin{enumerate}
    \item the embedding $\Hp(\T^n)\subset \mathcal{H}^q(\T^n)$ for any $0<q<p\leq \infty$ (as it occurs for Lebesgue spaces),
    \item the fact that not all atoms must have vanishing moments, cf. Definition \ref{def:atoms} (expected in view of the analogy between $\Hp(\T^n)$ with $\Hp_{loc}(\R^n)$, defined in \cite{goldberg1979}).
\end{enumerate}
From now on, if the domain is not stated explicitly, it is $\T^n$, Also, to simplify the exposition, we fix $\T^n=[-1/2,1/2]^n$ with periodic boundary conditions.

\subsection{Definition of $\Hp(\T^n)$}
We will define Hardy spaces using maximal functions. To this end, one introduces a rescaling parameter for a test function, which is more natural in $\R^n$. We understand the convolution between a function $\Psi \in \fancyS(\R^n)$ and $f\in \fancyD'(\T^n)$ as the convolution in $\R^n$ of $\Psi$ and ${f^{\text{ext}}}$, which is the periodic extension of $f$. To avoid confusion, we denote by $*_{\T^n}$ the convolution between periodic functions, by $*_{\R^n}$ the convolution between functions on $\R^n$, and by $*$ the above mentioned convolution of $\Psi \in \fancyS(\R^n)$ and $f\in \fancyD'(\T^n)$, i.e.\ $\Psi*f := \Psi*_{\R^n} {f^\text{ext}}$.

Since the resulting function $\Psi*f$ inherits the period of $f$, we treat it as a periodic function (without renaming).

\begin{defi}[Maximal function]\label{def:mf}
Let $\Psi\in\fancyS(\R^n)$ such that $\int_{\T^n}\Psi(x)dx\neq0$. For each $0<\zeta< \infty$, let ${\Psi}_\zeta(x)=\zeta^{-n}\Psi(\frac{x}{\zeta})$. Given $f\in \fancyD'(\T^n)$, we define the maximal function
$$m_\Psi f(x)=\sup_{0<\zeta<\infty}|f*\Psi_\zeta(x)|.$$
\end{defi}

\begin{defi}[Hardy space]
Let $0<p<\infty$. The real Hardy space $\Hp$ is defined as
$$\Hp=\{f\in\fancyD'(\T^n):m_\Psi f\in L^p(\T^n) \},$$
and we write
$$\norm{f}_{\Hp}=\norm{m_\Psi f}_{L^p(\T^n)}.$$
\end{defi}Observe that, since in case $p \in (0,1)$ one has merely $(a+b)^p \le 2^{p-1}(a^p + b^p)$, $\norm{f}_{\Hp}$ is only a quasinorm. 

\subsubsection{Equivalent definitions via other maximal functions}
Let us discuss two other ways to introduce Hardy spaces, based on two other notions of a maximal function. The first one uses the non-centered\footnote{This maximal function is usually called non-tangential maximal function \cite{stein1993harmonic}. We prefer to use non-centered instead to avoid possible confusion.} maximal function:
\begin{equation}\label{eq:nontanmaxfunction}
m_{\Psi}^*f(x)=\sup_{|x-y|<\zeta}|(f*\Psi_\zeta)(y)|.   
\end{equation}

To arrive at the second way, we recall first that $\fancyS(\R^n)$ is equipped with the family of seminorms $$\norm{\cdot}_{\alpha,\beta}=\sup_{x\in\R^n}|x^\alpha\na^\beta(\cdot)|,$$ where $\alpha,\beta\in \N$. We can fix a set $\fancyF$ of seminorms from $\fancyS(\R^n)$, and consequently the set 
$$\fancyS_\fancyF=\{\Psi\in \fancyS(\R^n):\norm{\Psi}_{\alpha_i,\beta_i}\leq 1 \text{ for all } \norm{\cdot}_{\alpha_i,\beta_i}\in \fancyF\}.$$
Then, we define the grand maximal function based on $\fancyF$ as 
$$m_\fancyF f(x)=\sup_{\Psi\in\fancyS_\fancyF}|m_\Psi f(x)|.$$
It turns out that each of these notions is, under mild assumptions, equivalent to our original definition, in view of

\begin{lemma}\label{lem:hardycomparableseminorms}
Let $f\in \fancyD'(\T^n)$. For each $N\in \N$, let $\fancyF_N$ be the set of seminorms
$$\fancyF_N=\{\norm{\cdot}_{\alpha,\beta}:|\alpha|\leq N,|\beta|\leq N\}.$$
Then, if $N$ is sufficiently large in terms of $p$ and $n$, the quantities
$$\norm{m_\Psi f}_{L^p}\text{, } \norm{m_\Psi^* f}_{L^p} \text{ and }\norm{m_{\fancyF_N} f}_{L^p}$$
are mutually comparable, with bounds independent of $f$. The implicit bounds may depend, however, on $\Psi, p, n$. The dependence on $\Psi$ can be uniformly bounded in $\{\Psi'\in \fancyS(\R^n):\norm{\Psi'}_\fancyF\leq C\}$, for a certain family of seminorms $\fancyF$.

In particular, the fact that $f\in \Hp$ does not depend on the particular choice of $\Psi$ in the Definition \ref{def:mf} of the maximal function. 
\end{lemma}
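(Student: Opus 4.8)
The statement is a periodic analogue of the classical equivalence of maximal-function characterisations of $\mathcal{H}^p(\R^n)$ (Fefferman–Stein), and the plan is to reduce everything to the Euclidean theory by working with periodic extensions. The three quantities satisfy the trivial chain
$$
\norm{m_\Psi f}_{L^p}\le \norm{m_\Psi^* f}_{L^p}\le C\,\norm{m_{\fancyF_N} f}_{L^p},
$$
where the first inequality is pointwise (the centred supremum is dominated by the non-centred one, taking $y=x$), and the second holds because, after the rescaling $\Psi_\zeta$, each dilate $\Psi_\zeta(\cdot-y)$ centred near $x$ is — up to an admissible $L^\infty$-normalised bump — a member of $\fancyS_{\fancyF_N}$ evaluated in the grand maximal function; this is exactly the reasoning used in $\R^n$ and transfers verbatim since our convolution $\Psi*f$ is defined via $f^{\text{ext}}$ on $\R^n$. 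So the whole content is the reverse bound $\norm{m_{\fancyF_N} f}_{L^p}\lesssim \norm{m_\Psi f}_{L^p}$ for $N=N(p,n)$ large enough.

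\textbf{Key steps for the hard direction.} First I would fix one auxiliary function $\Phi\in\fancyS(\R^n)$ with $\int\Phi\neq 0$ and record the classical \emph{reproducing/subordination identity}: for any $\Psi\in\fancyS_{\fancyF_N}$ one can write $\Psi_\zeta * g$ (for $g=f^{\text{ext}}$) as a superposition $\int_1^\infty \Phi_{t\zeta}* \eta^{(t)}_\zeta * g \,\frac{dt}{t}$ (or the discrete analogue), where the correcting kernels $\eta^{(t)}$ decay rapidly in $t$ with constants controlled by finitely many seminorms of $\Psi$ — precisely the seminorms collected in $\fancyF_N$ provided $N$ is large in terms of $p,n$. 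This is the step where the choice of $N$ is pinned down, and it is the genuine engine of the proof; it is purely a statement about Schwartz functions on $\R^n$, so no periodicity enters. Second, I would feed this identity into a pointwise estimate bounding $m_\Psi f(x)$ (for all $\Psi\in\fancyS_{\fancyF_N}$ simultaneously, hence $m_{\fancyF_N}f(x)$) by a weighted average of the one-function non-centred maximal function $m_\Phi^* f$, which by the standard Hardy–Littlewood / Peetre argument is in turn controlled by a power of the Hardy–Littlewood maximal function applied to $m_\Phi f$, i.e.
$$
m_{\fancyF_N} f(x)\le C\,\big(\mathcal M\big[(m_\Phi f)^{p/2}\big](x)\big)^{2/p},
$$
valid once $N$ is large. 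Taking $L^p$ norms and using the $L^{2}$-boundedness of $\mathcal M$ (note the exponent $p/2<1$ is handled by raising to the power $p/2$ first, which is why $m_\Phi f$ must already be assumed in $L^p$ — this is fine, the statement is about mutual comparability \emph{of the finite quantities}) gives $\norm{m_{\fancyF_N} f}_{L^p}\lesssim \norm{m_\Phi f}_{L^p}$. Finally, a separate elementary lemma shows $\norm{m_\Phi f}_{L^p}$ and $\norm{m_\Psi f}_{L^p}$ are comparable for any two admissible $\Psi,\Phi$ (each dilate of one is a bounded superposition of dilates of the other by the same subordination trick), which closes the circle and also yields the last sentence of the statement.

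\textbf{Where periodicity is used, and the main obstacle.} The only place the torus enters is in converting $L^p(\R^n)$ statements about $f^{\text{ext}}$ into $L^p(\T^n)$ statements: since all maximal functions here inherit the period of $f$, their $L^p$ norms over $\R^n$ are infinite, so one must phrase the Hardy–Littlewood maximal inequality \emph{locally} — e.g. prove a local (or periodic) version of $\mathcal M:L^q\to L^q$ on $\T^n$, which is standard, or equivalently note that all the kernels above have enough decay that the relevant integrals over $\R^n$ can be folded back onto a single fundamental domain with summable tails. I expect this folding/localisation bookkeeping — making sure the rapidly decaying correctors $\eta^{(t)}$ genuinely produce a \emph{convergent} periodic sum and that constants stay uniform in $\zeta\in(0,\infty)$ including the large-$\zeta$ regime where $\Psi_\zeta$ spreads over many periods — to be the main technical obstacle, though it is soft rather than deep. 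The quantitative choice of $N$ (large enough that the correctors gain enough decay to beat the $p/2$ power) is the other point requiring care, but it is exactly as in \cite{follandsteinhardy,stein1993harmonic} and I would import it.
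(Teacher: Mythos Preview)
Your proposal is essentially aligned with the paper's treatment: the paper does not give a self-contained proof of this lemma at all, but simply refers to \cite[Section III.1]{stein1993harmonic} and \cite[p.~72]{follandsteinhardy}, remarking that ``we work with the same space of test functions, and the subsequent computations differ only slightly due to the change of domain,'' and recording the sharp value $N(p)=\lfloor n(\tfrac{1}{p}-1)\rfloor+1$. What you have sketched is precisely the classical Fefferman--Stein argument contained in those references, together with an explicit discussion of the periodic bookkeeping that the paper leaves implicit; so your approach and the paper's are the same, with yours being more detailed than the paper's one-line deferral.
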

For a proof of this Lemma in $\Hp(\R^n)$ that also works in $\Hp$, see \cite[Section III.1]{stein1993harmonic}. Note that we work with the same space of test functions, and the subsequent computations differ only slightly due to the change of domain.
The precise minimum value of $N$ is given in \cite[p. 72]{follandsteinhardy}, and it is $N(p)=\lfloor n(\frac{1}{p}-1)\rfloor+1$.

\subsection{Comparison with $L^p$ and basic properties}
For any $0<p\leq q\leq\infty$, Hölder inequality gives
\begin{equation}\label{eq:hardyembedding}
\norm{f}_{\Hp}\leq \norm{f}_{\mathcal{H}^q}.
\end{equation} 
In fact, there is nothing new in the case $p>1$ when one compares Hardy and Lebesgue spaces, in view of 

\begin{lemma}
For $1<p\leq \infty$, $\Hp\equiv L^p (\T^n)$.
\end{lemma}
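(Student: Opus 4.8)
The plan is to show the two-sided embedding $L^p(\T^n) \hookrightarrow \Hp$ and $\Hp \hookrightarrow L^p(\T^n)$ for $1 < p \le \infty$, with comparable (quasi)norms. One direction is already recorded in \eqref{eq:hardyembedding}: taking $q = p$ there is vacuous, but the relevant inclusion $L^p \subset \Hp$ for $p>1$ follows instead from the fact that the maximal function $m_\Psi f$ is dominated (up to a constant depending on $\Psi$) by the Hardy--Littlewood maximal function $Mf$ of the periodic extension $f^{\text{ext}}$, restricted to one period. Indeed, since $\Psi \in \fancyS(\R^n)$, a standard argument bounds $|f * \Psi_\zeta(x)| \lesssim \|\,|\Psi|\,\|_{L^1} \, (Mf^{\text{ext}})(x) \lesssim Mf^{\text{ext}}(x)$ uniformly in $\zeta$, using that $\Psi$ has an integrable radially decreasing majorant. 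Hence $m_\Psi f \lesssim M f^{\text{ext}}$ pointwise on $\T^n$, and the Hardy--Littlewood maximal theorem on $\R^n$ (applied to the locally integrable function $f^{\text{ext}}$, noting $f \in L^p(\T^n) \subset L^1(\T^n)$) gives $\|m_\Psi f\|_{L^p(\T^n)} \lesssim \|M f^{\text{ext}}\|_{L^p(\T^n)} \lesssim \|f\|_{L^p(\T^n)}$ for $1 < p \le \infty$. Thus $f \in \Hp$ with $\|f\|_{\Hp} \lesssim \|f\|_{L^p}$.

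For the reverse inclusion $\Hp \subset L^p(\T^n)$ when $p > 1$, I would argue that if $f \in \Hp$ then $f$ is in fact a function, recovered as the a.e.\ limit $\lim_{\zeta \to 0} f * \Psi_\zeta$ (after normalizing $\int \Psi = 1$), and that $|f| \le m_\Psi f$ almost everywhere by Lebesgue differentiation applied to this approximate identity. More carefully: since $\int_{\T^n}\Psi \ne 0$ we may rescale $\Psi$ so $\int_{\R^n}\Psi = 1$; then $\{\Psi_\zeta\}$ is an approximate identity on $\R^n$, and for the distribution $f^{\text{ext}}$ one has $f * \Psi_\zeta \to f^{\text{ext}}$ in $\fancyD'$; combined with the uniform $L^p$ bound $\sup_\zeta \|f * \Psi_\zeta\|_{L^p(\T^n)} \le \|m_\Psi f\|_{L^p} < \infty$ and weak-$*$ compactness of $L^p$ (using $p > 1$), one extracts a subsequential weak-$*$ limit in $L^p$, which must coincide with $f^{\text{ext}}$; so $f \in L^p(\T^n)$ and $\|f\|_{L^p} \le \|m_\Psi f\|_{L^p} = \|f\|_{\Hp}$. (The pointwise bound $|f| \le m_\Psi f$ a.e.\ also follows, e.g.\ from the Lebesgue differentiation theorem once we know $f$ is locally integrable.) The case $p = \infty$ is handled the same way, using weak-$*$ compactness of $L^\infty$.

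The main obstacle, and the point that needs care rather than cleverness, is the lower bound $|f| \le m_\Psi f$: it requires knowing a priori that the distribution $f$ is represented by a locally integrable function so that Lebesgue differentiation applies, and the cleanest route to this is precisely the weak-$*$ compactness argument above, which is available only because $p > 1$ (for $p = 1$ one would get a measure, not an $L^1$ function --- consistent with $\mathcal{H}^1 \subsetneq L^1$). A secondary technical point is to confirm the domination $m_\Psi f \lesssim M f^{\text{ext}}$ on $\T^n$ genuinely only uses values of $f^{\text{ext}}$, i.e.\ periodicity causes no trouble: the convolution $\Psi_\zeta * f^{\text{ext}}$ at a point $x \in \T^n$ samples $f^{\text{ext}}$ over all of $\R^n$, but the Schwartz decay of $\Psi$ together with periodicity of $f^{\text{ext}}$ still yields the bound by the (global) Hardy--Littlewood maximal function of $f^{\text{ext}}$, whose $L^p(\T^n)$ norm is controlled by $\|f\|_{L^p(\T^n)}$ by periodicity. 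I would close by noting that the constants depend only on $\Psi, n, p$, as in Lemma \ref{lem:hardycomparableseminorms}, so the identification $\Hp \equiv L^p(\T^n)$ is as normed spaces.
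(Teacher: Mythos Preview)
Your argument is correct and matches the approach the paper points to (Stein, Chapter III, Remark 1.2.2): dominate $m_\Psi f$ by the Hardy--Littlewood maximal function to get $L^p \subset \Hp$, and use that $f * \Psi_\zeta \to f$ in $\fancyD'$ together with the uniform $L^p$ bound $\sup_\zeta \|f*\Psi_\zeta\|_{L^p} \le \|m_\Psi f\|_{L^p}$ and weak-$*$ compactness to get $\Hp \subset L^p$. The paper gives no further details beyond the reference, so your write-up is in fact more explicit than what appears there; the only cosmetic point is that the opening remark about \eqref{eq:hardyembedding} is a digression you can drop, since that inequality concerns $\Hp \subset \mathcal{H}^q$ rather than $L^p \subset \Hp$.
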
The proof of the above Lemma follows the same lines as \cite[Chapter III, Remark 1.2.2.]{stein1993harmonic}.

The advantages of using Hardy spaces, mentioned in the introduction, appear in the range $0<p\leq 1$.
To define $\Hp$ we have used a test function $\Psi\in \fancyS(\R^n)$. 

We intend to show now that $\Hp\subset \fancyD'(\T^n)$. To this end, we need to know first how to appropriately convert periodic test functions into full space ones, and vice versa: \\
(i) Given $\Psi\in \fancyS(\R^n)$, one can easily construct a periodic test function ${\Psi^\text{per}}\in \fancyD(\T^n)$ as
$$\Psi^\text{per}(\cdot)\coloneqq\sum_{i\in\Z^n}\Psi(\cdot+i).$$
This gives, for any $f\in \fancyD'(\T^n)$, 
$$\Psi*f=\Psi^\text{per}*_{\T^n}f,$$
\\
(ii) Conversely, we can also construct a test function in the whole space from a periodic one. Let $\Xi\in C^\infty_c (\R^n)$ be a positive function such that $\Xi\equiv 1$ on $B(0,2)$, and define
$$\tilde{\Xi}(\cdot)\coloneqq\frac{\Xi(\cdot)}{\sum_{i\in\Z^n}\Xi(\cdot+i)}.$$
Given $\tilde{\Psi}\in \fancyD(\T^n)$, and viewing it as a periodic function defined on $\R^n$, the function 
\begin{equation}\label{e:rule_TtoR}
\tilde{\Psi}^\text{exp}(\cdot)\coloneqq \tilde{\Psi}(\cdot)\tilde{\Xi}(\cdot)
\end{equation}
belongs to $\fancyS(\R^n)$. $\tilde{\Psi}^\text{exp}$ is an expanded (in $\R^n$) version of $\tilde{\Psi}$. In particular, for any $f\in \fancyD'(\T^n)$,

\begin{equation}\label{e:rule_TtoR_id}
\tilde{\Psi}*_{\T^n}f=\tilde{\Psi}^\text{exp}*f.
\end{equation}

Denoting by $\hookrightarrow$ a continuous inclusion, we have
\begin{prop}
For $0<p\leq 1$, $\Hp\hookrightarrow \fancyD'(\T^n)$.
\end{prop}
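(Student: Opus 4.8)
The goal is to show that the inclusion $\Hp \hookrightarrow \fancyD'(\T^n)$ is continuous, i.e. that convergence in the $\Hp$ quasinorm implies convergence as distributions. Since both spaces are metrizable (for $\Hp$, use Proposition \ref{prop:mainHpcomplete}, the metric $d(f,g) = \norm{f-g}_{\Hp}^p$), it suffices to prove the estimate
$$|\langle f, \tilde\Psi\rangle| \lesssim_{\tilde\Psi} \norm{f}_{\Hp}$$
for every fixed test function $\tilde\Psi \in \fancyD(\T^n)$, where the implicit constant depends on finitely many seminorms of $\tilde\Psi$ (after passing through the conversion \eqref{e:rule_TtoR}).

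The plan is as follows. First I would reduce pairing against $\tilde\Psi$ to evaluating a single convolution: note that $\langle f, \tilde\Psi\rangle = (f *_{\T^n} \check{\tilde\Psi})(0)$ where $\check{\tilde\Psi}(x) = \tilde\Psi(-x)$, or more cleanly, pick a fixed Schwartz function $\Phi \in \fancyS(\R^n)$ with $\int \Phi \ne 0$ and use $\Phi$ to define the maximal function $m_\Phi f$ (by Lemma \ref{lem:hardycomparableseminorms} the choice of $\Phi$ is immaterial). The key point is that for a suitably normalized choice of $\Phi$ and scale $\zeta$, the value $f * \Phi_\zeta(x)$ at a single point is dominated pointwise by $m_\Phi f(x)$. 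Then I would integrate: $\int_{\T^n} |f * \Phi_\zeta(x)| dx \le \int_{\T^n} m_\Phi f(x)\, dx \le \norm{m_\Phi f}_{L^p}^{?}$ — but here is the subtlety for $p<1$: the $L^1$ norm is not controlled by the $L^p$ norm on an infinite measure space. However, $\T^n$ has finite measure, so Hölder (or Jensen) gives $\norm{g}_{L^1(\T^n)} \le |\T^n|^{1-1/1}\cdots$; more precisely for $p<1$ one still has $\norm{g}_{L^p(\T^n)} \ge $ nothing useful directly, but $m_\Phi f \in L^p$ with $p < 1$ on a finite-measure space does \emph{not} imply $m_\Phi f \in L^1$. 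So I must be more careful.

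The correct route: to pair $f$ with a \emph{fixed} test function $\tilde\Psi$, I do not need the full maximal function in $L^1$; I only need one convolution at appropriately chosen scales, and I can localize. Concretely, write $\langle f, \tilde\Psi \rangle$ using the reproducing/Calderón-type formula, or simply observe: choosing $\Psi = \tilde\Psi^{\text{exp}}/c$ with $c = \int \tilde\Psi^{\text{exp}} \ne 0$ (assuming $\int \tilde\Psi \ne 0$; the general case follows by adding a bump) and $\zeta = 1$, identity \eqref{e:rule_TtoR_id} gives $\tilde\Psi *_{\T^n} f = c\,(f * \Psi_1)$. Evaluating at a point $x_0$ and using $|f*\Psi_1(x_0)| \le m_\Psi f(x_0)$, then integrating over a unit cube: since $m_\Psi f \in L^p(\T^n)$ and $p \le 1$ on the \emph{finite} measure space $\T^n$, we want $\inf_{x \in \T^n} m_\Psi f(x)$ or an average. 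Here is the clean fact: if $g \in L^p(\T^n)$, $0 < p \le 1$, then $g$ is finite a.e. and $\essinf_{x} |g(x)|^p \le |\T^n|^{-1}\int |g|^p$, hence there exists a point (in fact a positive-measure set of points) $x_0$ with $|g(x_0)| \le (|\T^n|^{-1}\norm{g}_{L^p}^p)^{1/p} = \norm{g}_{L^p}$ (since $|\T^n|=1$). Applying this with $g = m_\Psi f$ and using translation invariance of the whole setup to center at that good point $x_0$, we get $|\langle \tilde\Psi, f\rangle| = |c|\,|f*\Psi_1(x_0)| \le |c|\, m_\Psi f(x_0) \le |c|\,\norm{f}_{\Hp}$, which is the desired continuity estimate; the dependence of $c$ and of $\Psi$ (through Lemma \ref{lem:hardycomparableseminorms}) on finitely many seminorms of $\tilde\Psi$ makes this continuous in $\tilde\Psi$.

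The main obstacle is the one just navigated: for $p < 1$ the $\Hp$ quasinorm controls only an $L^p$ quantity, not $L^1$, so a naive "integrate the maximal function" argument fails; the trick is to exploit finiteness of $|\T^n|$ together with translation invariance to evaluate the convolution at a well-chosen base point where $m_\Psi f$ is no larger than $\norm{f}_{\Hp}$. A secondary technical point is handling test functions $\tilde\Psi$ with $\int_{\T^n}\tilde\Psi = 0$: for these, write $\tilde\Psi = \tilde\Psi_0 + (\tilde\Psi - \tilde\Psi_0)$ where $\tilde\Psi_0$ is a fixed bump with nonzero integral scaled appropriately, or simply note the pairing $\langle f, \tilde\Psi\rangle$ for such $\tilde\Psi$ can be recovered from derivatives/translates where the above applies — either way the estimate extends by linearity to all of $\fancyD(\T^n)$. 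One should also record that $f$, being in $\Hp$, has a well-defined distributional action: $f * \Psi_\zeta$ is the smooth periodic function realizing $\langle f, (\Psi_\zeta)^{\text{per}}(\,\cdot - x)\rangle$, so no circularity arises in "evaluating $f$ at a point against $\tilde\Psi$."
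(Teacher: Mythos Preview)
Your ``translation invariance'' step contains a genuine gap. You write $|\langle \tilde\Psi, f\rangle| = |c|\,|f*\Psi_1(x_0)|$, but the left side equals $|c|\,|(f*\Psi_1)(0)|$, not the value at $x_0$; these are different numbers in general. The good point $x_0$ you produce satisfies $m_\Psi f(x_0) \le \norm{f}_{\Hp}$, which controls $|f*\Psi_\zeta(x_0)|$ over all $\zeta$, not the value at $0$. If you try to repair this by passing to the translated kernel $\Psi^{[x_0]} = \Psi(\cdot - x_0)$ so that $(f*\Psi^{[x_0]})(x_0) = (f*\Psi)(0)$, you run into a circularity: $x_0$ was selected for $m_\Psi f$, not for $m_{\Psi^{[x_0]}} f$, and the latter may well be large at $x_0$. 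A correct version of your idea would choose $x_0$ for the \emph{grand} maximal function $m_{\fancyF_N} f$ instead and then note that translates of $\Psi$ by points of $\T^n$ have uniformly bounded Schwartz seminorms; but this extra layer is precisely what you were hoping to avoid.

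The paper sidesteps the whole issue by using the non-centered maximal function $m_\Psi^* f(y) = \sup_{|x-y|<\zeta} |f*\Psi_\zeta(x)|$ of \eqref{eq:nontanmaxfunction}: by definition $|(f*\Psi_1)(0)| \le m_\Psi^* f(y)$ for \emph{every} $y$ with $|y| \le 1$, so one raises to the $p$th power, integrates over $\T^n$, and invokes Lemma~\ref{lem:hardycomparableseminorms} (only the easy pointwise direction $m_\Psi^* f \lesssim_\Psi m_{\fancyF_N} f$, which needs no hypothesis on $\int\Psi$). This route also dispenses with your separate handling of the case $\int_{\T^n}\tilde\Psi = 0$.
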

\bpf
Let $\tilde{\phi}\in \fancyD(\T^n)$, and define $\underline{\tilde{\phi}}(x)=\tilde{\phi}(-x)$. We denote by ${\underline{\tilde{\phi}}}^\text{exp}$ the function obtained from $\underline{\tilde{\phi}}$ according to the rule \eqref{e:rule_TtoR}. Via \eqref{e:rule_TtoR_id} and by definition \eqref{eq:nontanmaxfunction} we have
$$|\int_{\T^n} f(x)\tilde{\phi}(x)dx|=|f*_{\T^n} \tilde{\underline{\phi}}(0)|=|f*\underline{\tilde{\phi}}^\text{exp}(0)|\leq m_{\underline{\tilde{\phi}}^\text{exp}}^*f(y)\quad \forall |y|\leq 1.$$
Therefore, by Lemma \ref{lem:hardycomparableseminorms},
\begin{align*}
|\int_{\T^n} f(x)\tilde{\phi}(x)dx|^p\lesssim & \int_{\T^n}(m_{\underline{\tilde{\phi}}^\text{exp}}^*(f(y))^pdy\lesssim C(\underline{\tilde{\phi}}^\text{exp})\norm{\underline{\tilde{\phi}}^\text{exp}}_{\fancyF_N}^p\int_{\T^n}(m_{\fancyF_N}f(y))^pdy \\
\lesssim &C(\underline{\tilde{\phi}}^\text{exp})\norm{\underline{\tilde{\phi}}^\text{exp}}_{\fancyF_N}^p\norm{f}_{\Hp}^p,  
\end{align*}
where the constant $C(\underline{\tilde{\phi}}^\text{exp})$ comes from the implicit constants in Lemma \ref{lem:hardycomparableseminorms}. Note that, once we fix the function $\Xi$ allowing to expand a periodic test function $\tilde{\phi}$ according to rule \eqref{e:rule_TtoR}, we have $C(\underline{\tilde{\phi}}^\text{exp})\norm{\underline{\tilde{\phi}}^\text{exp}}_{\fancyF_N}^p\lesssim  \norm{{\tilde{\phi}}}_{\fancyF_N}^p$, and therefore 
\begin{equation*}
|\int f(x)\tilde{\phi}(x)dx|^p\lesssim \norm{{\tilde{\phi}}}_{\fancyF_N}^p\norm{f}_{\Hp}^p.    
\end{equation*}
This shows that the inclusion $\Hp\subset \fancyD'(\T^n)$ is continuous.
\epf

\begin{prop}
For $0<p\leq 1$, $\Hp$ is a complete metric space with the metric given by $d(f,g)=\norm{f-g}_{\Hp}^p$.
\end{prop}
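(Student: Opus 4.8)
The plan is to first check that $d$ is a metric and then to deduce completeness by the standard ``pass to a rapidly convergent subsequence'' argument for quasi-normed spaces; the only ingredient that is not purely formal is a Fatou-type lower semicontinuity of $\norm{\cdot}_{\Hp}$ along sequences converging in $\fancyD'(\T^n)$.

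First I would record the $p$-subadditivity of $\norm{\cdot}_{\Hp}^p$. Since $\Psi_\zeta*(f+g)=\Psi_\zeta*f+\Psi_\zeta*g$ one has the pointwise bound $m_\Psi(f+g)\leq m_\Psi f+m_\Psi g$, and because $0<p\leq 1$ the map $t\mapsto t^p$ is subadditive on $[0,\infty)$, so $(m_\Psi(f+g))^p\leq(m_\Psi f)^p+(m_\Psi g)^p$; integrating gives $\norm{f+g}_{\Hp}^p\leq\norm{f}_{\Hp}^p+\norm{g}_{\Hp}^p$. In particular $\Hp$ is a linear space, so $d$ is well defined on $\Hp\times\Hp$, and this inequality applied to $f-g$ and $g-h$ is exactly the triangle inequality for $d$. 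Symmetry follows from $m_\Psi(-h)=m_\Psi h$; and if $d(f,g)=0$ then $m_\Psi(f-g)=0$ a.e., hence the smooth function $\Psi_\zeta*(f-g)$ vanishes identically for every $\zeta$, and letting $\zeta\to0$ --- or simply using the continuity of the embedding $\Hp\hookrightarrow\fancyD'(\T^n)$ proved above --- yields $f=g$ in $\fancyD'(\T^n)$. Thus $d$ is a metric.

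For completeness, the crucial lemma I would establish is: if $h_j\to h$ in $\fancyD'(\T^n)$, then $m_\Psi h(x)\leq\liminf_j m_\Psi h_j(x)$ for every $x$, and hence, by Fatou's lemma, $\norm{h}_{\Hp}^p\leq\liminf_j\norm{h_j}_{\Hp}^p$. For the pointwise bound, write $\Psi_\zeta*h(x)=(\Psi_\zeta)^{\text{per}}*_{\T^n}h(x)$, i.e.\ the pairing of $h$ with the fixed smooth periodic function $y\mapsto(\Psi_\zeta)^{\text{per}}(x-y)$; convergence in $\fancyD'(\T^n)$ then gives $\Psi_\zeta*h_j(x)\to\Psi_\zeta*h(x)$ for each fixed $x,\zeta$, so $|\Psi_\zeta*h(x)|\leq\liminf_j m_\Psi h_j(x)$, and taking the supremum over $\zeta$ completes it. Now let $(f_n)$ be $d$-Cauchy. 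Passing to a subsequence I may assume $\norm{f_{n_{k+1}}-f_{n_k}}_{\Hp}^p\leq 2^{-k}$; put $g_k=f_{n_{k+1}}-f_{n_k}$ and $S_K=\sum_{k=1}^K g_k$. Since $\norm{g_k}_{\Hp}\leq 2^{-k/p}$, continuity of $\Hp\hookrightarrow\fancyD'(\T^n)$ makes $\sum_k\langle g_k,\phi\rangle$ absolutely convergent for each $\phi\in C^\infty(\T^n)$, so $S_K$ converges in $\fancyD'(\T^n)$ to some $S$; in particular $S-S_K$ is the $\fancyD'$-limit of $S_L-S_K=\sum_{k=K+1}^L g_k$ as $L\to\infty$. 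Combining the lemma with $p$-subadditivity,
\[
\norm{S-S_K}_{\Hp}^p\leq\liminf_{L\to\infty}\norm{S_L-S_K}_{\Hp}^p\leq\liminf_{L\to\infty}\sum_{k=K+1}^L\norm{g_k}_{\Hp}^p=\sum_{k=K+1}^\infty\norm{g_k}_{\Hp}^p\leq 2^{-K}.
\]
Hence $S-S_K\in\Hp$ for every $K$, and since $S_K\in\Hp$ we get $S\in\Hp$ with $d(S_K,S)\to0$. Therefore $f_{n_{k+1}}=f_{n_1}+S_k$ converges in $\Hp$ to $f:=f_{n_1}+S$, and a Cauchy sequence in a metric space with a convergent subsequence converges; so $(f_n)\to f$, proving completeness.

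The scheme is soft once the two structural facts --- $p$-subadditivity of $\norm{\cdot}_{\Hp}^p$ and continuity of $\Hp\hookrightarrow\fancyD'(\T^n)$ --- are available, and both are already established in the excerpt. The step I expect to require genuine care is the lower semicontinuity lemma: one must verify that for fixed $x$ and $\zeta$ the evaluation $h\mapsto\Psi_\zeta*h(x)$ is continuous on $\fancyD'(\T^n)$ (which is exactly why it is convenient to route through the periodised kernel $(\Psi_\zeta)^{\text{per}}$ rather than $\Psi_\zeta$ on $\R^n$), and that the supremum over $\zeta$ may be pulled inside the $\liminf$ in the (easy) direction needed here.
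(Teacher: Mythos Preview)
Your proof is correct and follows essentially the same route as the paper: pass to a rapidly Cauchy subsequence, use the continuous embedding $\Hp\hookrightarrow\fancyD'(\T^n)$ to obtain a distributional limit, and then control the $\Hp$ quasinorm of the limit via pointwise subadditivity of $m_\Psi$ together with $p$-subadditivity for $0<p\le1$. Your write-up is in fact more complete than the paper's, since you explicitly verify the metric axioms, isolate the Fatou-type lower semicontinuity $m_\Psi h\le\liminf_j m_\Psi h_j$ as a lemma, and check that the subsequence actually converges in the $\Hp$ metric (not merely that the limit lies in $\Hp$).
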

\bpf
Let $a_i$ be a Cauchy sequence in $\Hp$. Since $\Hp\subset \fancyD'$ continuously, $a_i$ converges to $a$ in $\fancyD'$. In view of uniqueness of limits in $\fancyD'$, it suffices now to show that $a \in\Hp$. Choose a subsequence 
$\tilde{a}_n$ of $a_i$ so that
\[
d(\tilde{a}_n,\tilde{a}_{n+1})<2^{-n}
\]
and define
$$f_n=\tilde{a}_{n+1}-\tilde{a}_n,$$
observe that $\sum \norm{f_n}_{\Hp}^p<\infty$. Since the partial sums of this series are Cauchy in $\Hp\subset \fancyD'$, the series converges in $\fancyD'$ to a distribution $f=a-\tilde a_0$.
We have
$$\left(m_\Psi f(x)\right)^p = \left(m_\Psi \sum f_n(x)\right)^p \leq (\sum m_\Psi f_n(x))^p\leq \sum (m_\Psi f_n(x))^p$$
because $p \le 1$,
and integrating it we obtain 
$$\norm{f}_{\Hp}^p\leq \sum \norm{f_n}_{\Hp}^p<\infty,$$
so $f\in \Hp$, and $a = f+\tilde a_0\in \Hp$.
\epf

\subsection{Atomic decomposition}\label{subsec:atomicdecomposition}
A different way to characterize Hardy spaces $\Hp$ is through atomic decomposition, which allows any function in $\Hp$ to be expressed as a linear combination of 'atoms'. i.e.\ functions with well-controlled properties in terms of size, support, and cancellation. This characterization provides an effective tool for analyzing functions in $\Hp$, without relying on maximal function definitions.

In the atomic decomposition, the support of an atom plays an important role. 
Since we are working simultaneously with periodic and non-periodic functions, we have naturally identified (a function on) $\T^n$ with its periodic extension, i.e.\ with (function on) $\R^n$ with periodicity. 
However, in what follows, instead of working with infinite copies (of the same function), it will be sometimes convenient to work with a single representative. This motivates the following notation, which formalises the choosing of the copy closest to the origin, while keeping its support connected.

\begin{defi}\label{def:Rnball}
Let $f$ be a periodic function supported on a ball $B_f\subsetneq \T^n$. We denote by $\overline{B}_f$ the ball in $\R^n$ such that $(\cup_{i\in\Z^n} \overline{B}_f)/\Z^n\equiv B_f\subset \T^n$, and such that its origin is closest to $0$. If this choice is not yet unique, we choose the ball whose origin has more coordinates nonnegative. If $\supp f= \T^n$, we set $\bar{B}_f=B(0,\sqrt{n})\subset \R^n$.
We identify $f$ with its representative supported on $\overline{B}_f$. We say that the ball $\overline{B}_f$ is associated to $f$.
\end{defi}

Let us remark that, according to the above definition, we understand that any periodic function $f$ is supported on a periodic ball, and we can always choose a representative ball in $\R^n$ that contains all the information of $f$ (see Figure \ref{figureatoms}).

\begin{defi}\label{def:smallball}
 We say that $B$ is a small ball if $|{B}|<C_{at}$, for a fixed constant $C_{at}$ small enough. In the following, we will take $C_{at}=\frac{1}{10}$.
\end{defi}

As in the case of $\Hp_{loc}(\R^n)$, cf \cite{goldberg1979}, we require that $\Hp$-atoms supported in small balls integrate to $0$, when tested with  polynomials (up to certain degree). Observe that for periodic functions supported on small balls, testing them with any polynomial $p$ is well-defined.  Furthermore, in such case, if for $f$ supported on ${\bar B_f}$ we have $\int_{\bar B_f} p(x) f =0$, then $\int_{\bar B_f+x_0} p(x) f =0$, so periodicity is not relevant.

Keeping in mind the above identification, we introduce 
\begin{defi}[$\Hp$-atoms]\label{def:atoms}
Let $a\in \fancyD'(\T^n)$ be a measurable function, and $\bar{B}_a$ its representative ball according to Definition \ref{def:Rnball}. We say that $a$ is an $\Hp$-atom associated to $\bar{B}_a$ if:
\begin{enumerate}
    \item $\norm{{a}}_{L^\infty}\leq |\bar{B}_{a}|^{-\frac{1}{p}}$.
    \item If $\bar{B}_{a}$ is a small ball, $\int_{\overline{B}_{a}} x^\beta {a}=0$ for all multiindices $ \beta$ with $|\beta|\leq n(p^{-1}-1)$. \label{itematomscancellations}
\end{enumerate} 
\end{defi}

Observe that one of the advantages of choosing the representative as in Definition \ref{def:Rnball} is that we can write the point 2.\ of Definition \ref{def:atoms}, while writing $\int_{\T^n}x^\beta a$ is problematic, because $x$ is not periodic.

\begin{prop}\label{prop:atomicbound}
 There is a uniform constant $C$, such that for all $\Hp$-atoms $a$ it holds $\norm{a}_{\Hp}\leq C$ (where the constant may depend on $d,n, \Psi$).
\end{prop}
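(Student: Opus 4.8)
The plan is to prove the uniform bound $\norm{a}_{\Hp}\le C$ for an arbitrary $\Hp$-atom $a$ associated to its representative ball $\bar B_a$ by estimating $m_\Psi a(x)=\sup_{\zeta>0}|a*\Psi_\zeta(x)|$ pointwise and integrating, splitting the analysis according to whether $x$ is close to or far from $\bar B_a$, and according to whether $\bar B_a$ is a small ball or not. Write $\bar B_a=B(x_0,r)$ with $|\bar B_a|\sim r^n$, so that condition 1 gives $\norm a_{L^\infty}\lesssim r^{-n/p}$. After a translation we may as well assume $x_0=0$.

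First I would handle the local region $x\in B(x_0, 2r)$ (or a fixed dilate). There one uses only the trivial bound $|a*\Psi_\zeta(x)|\le \norm{a}_{L^\infty}\norm{\Psi_\zeta}_{L^1}=\norm{a}_{L^\infty}\norm{\Psi}_{L^1}\lesssim r^{-n/p}$, which is a bound on the Hardy–Littlewood-type maximal function; actually the cleaner route is $m_\Psi a\lesssim \mathcal M(a^{\mathrm{ext}})$ pointwise (standard since $\Psi$ is Schwartz, so dominated by a decreasing radial $L^1$ bump), and then $\norm{m_\Psi a}_{L^p(B(x_0,2r))}\le |B(x_0,2r)|^{1/p}\norm{m_\Psi a}_{L^\infty}\lesssim r^{n/p}\cdot r^{-n/p}=C$. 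This part is routine and needs no cancellation.

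The main work is the far region $|x-x_0|\ge 2r$. Here I split into two cases. If $\bar B_a$ is \emph{not} a small ball, then $|\bar B_a|\ge C_{at}$ forces $r\gtrsim 1$, so the far region has bounded measure (at most $|\T^n|$) and $m_\Psi a\lesssim \mathcal M(a^{\mathrm{ext}})$ together with $\norm{a}_{L^\infty}\lesssim r^{-n/p}\lesssim 1$ already gives $\norm{m_\Psi a}_{L^p(\T^n)}\lesssim 1$; no moment conditions are needed, which is exactly the periodic-domain advantage noted in the excerpt. If $\bar B_a$ \emph{is} small, one exploits the vanishing moments: for $|x-x_0|\ge 2r$, subtract the degree-$N$ Taylor polynomial of $\Psi_\zeta(x-\cdot)$ centred at $x_0$, where $N=\lfloor n(p^{-1}-1)\rfloor$, using condition \ref{itematomscancellations}; the remainder is controlled by $r^{N+1}\zeta^{-n-N-1}$ times the $L^\infty$ bound on $D^{N+1}\Psi$, integrated over $\bar B_a$ whose measure is $\sim r^n$. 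One then optimizes over $\zeta$: for $\zeta\lesssim |x-x_0|$ the decay of $\Psi$ (Schwartz) kills the contribution since the support of $\Psi_\zeta(x-\cdot)$ on $\bar B_a$ forces $\zeta\gtrsim |x-x_0|$; for $\zeta\gtrsim|x-x_0|$ one gets $|a*\Psi_\zeta(x)|\lesssim r^{n}\cdot r^{-n/p}\cdot r^{N+1}|x-x_0|^{-n-N-1}$. Raising to the $p$-th power and integrating over $|x-x_0|\ge 2r$ gives $\int_{2r}^{\infty}\big(r^{\,n+N+1-n/p}\rho^{-n-N-1}\big)^p\rho^{n-1}\,d\rho$, which converges precisely because $p(n+N+1)>n$ (equivalently $N>n/p-n-1$, i.e.\ $N\ge\lfloor n(p^{-1}-1)\rfloor$), yielding a constant independent of $r$ and of the atom.

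The main obstacle is getting the far-field estimate uniform in $\zeta$ and handling the cross-over scale $\zeta\sim|x-x_0|$ cleanly: one must combine the Taylor/cancellation estimate (good for $\zeta$ large relative to $r$) with the Schwartz decay of $\Psi$ (good for $\zeta$ small relative to $|x-x_0|$) so that there is no gap, and one must double-check the exponent bookkeeping so that the tail integral converges exactly at the stated moment threshold $|\beta|\le n(p^{-1}-1)$ — this is where the precise value of $N$ matters and where the only real care is needed. Periodicity itself causes no trouble beyond working with the representative ball $\bar B_a$ and the periodic extension $a^{\mathrm{ext}}$ in the convolution, and in the non-small-ball case it actually removes the need for any cancellation.
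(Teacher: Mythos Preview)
Your overall strategy matches the paper's: split into the local region $B(x_0,2r)$ (trivial $L^\infty$ bound), the big-ball case (no cancellation needed since $r\gtrsim 1$), and the small-ball far region (Taylor expansion using the vanishing moments). The exponent arithmetic you sketch for the tail integral is correct and is exactly what the paper does.

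There is, however, a genuine gap in your treatment of the small-ball far region: you ignore the periodicity for large $\zeta$. The convolution is $a*\Psi_\zeta(x)=\sum_{k\in\Z^n}\int_{\bar B_a+k}a(y)\Psi_\zeta(x-y)\,dy$, and once $\zeta\gtrsim 1$ the kernel $\Psi_\zeta(x-\cdot)$ sees $\sim\zeta^n$ lattice copies of the atom simultaneously. Your Taylor bound $r^{n-n/p+N+1}|x-x_0|^{-n-N-1}$ accounts only for the $k=0$ copy; summing over the remaining copies requires an additional argument (Taylor at each center $\bar x+k$, then a lattice sum that costs a factor $\zeta^n$, which must be absorbed by the $\zeta^{-n-N-1}$ decay). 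The paper handles exactly this regime separately, after first reducing without loss of generality to a compactly supported $\Psi$ (legitimate by the equivalence-of-maximal-functions lemma), so that for $\zeta\le \tfrac14$ only the single copy $\bar B_a$ is hit, and for $\zeta\ge\tfrac14$ the $\zeta^n$ copies are counted and estimated. Your closing sentence ``periodicity itself causes no trouble'' is precisely where the proof is incomplete.

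A smaller issue: your phrase ``the support of $\Psi_\zeta(x-\cdot)$ on $\bar B_a$ forces $\zeta\gtrsim|x-x_0|$'' is false for a general Schwartz $\Psi$; you need either to reduce to compactly supported $\Psi$ (as the paper does) or to argue via the Schwartz tail $\zeta^{-n-N-1}(1+|x-x_0|/\zeta)^{-L}$ uniformly in $\zeta$, optimising separately over $\zeta\le|x-x_0|$ and $\zeta\ge|x-x_0|$. That can be done, but it is not what you wrote.
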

\bpf
To simplify the exposition, we assume without loss of generality that $\Psi$ is supported on the unit ball.

Take an arbitrary $\Hp$-atom $a$, and let $\bar{B}_a$ be the associated ball according to Definition \ref{def:Rnball}.
Using the property 1.\ of Definition \ref{def:atoms}, and the fact that maximal function is pointwisely bounded by the $L^\infty$ norm, we have
\begin{equation}\label{eq:atomHpestimategeneral}
m_\Psi a(x)\leq |\bar{B}_a|^{-\frac{1}{p}}.  
\end{equation}
(i) Assume first that the ball $\bar{B}_a$ is not a small ball. Using the bound \eqref{eq:atomHpestimategeneral} and the fact that $|\bar{B}_a|\geq C_{at}$, we have
\begin{equation}\label{eq:Hpboundbigatoms}
\norm{a}_{\Hp}^p=\int_{\T^n}(m_\Psi a(x))^p \leq \int_{\T^n} |\bar{B}_a|^{-1}\leq C_{at}^{-{1}}.
\end{equation}
(ii) We assume now that $\bar{B}_a$ is a small ball, and we denote its center by $\bar{x}$. We define the hypercube $D=[-1/2,1/2]^n+\bar{x}\subset \R^n$ (see Figure \ref{figureatoms}).

\begin{figure}[ht]
    \centering
  \includegraphics[width=0.5\textwidth,trim=2cm 2cm 0cm 2cm,clip]{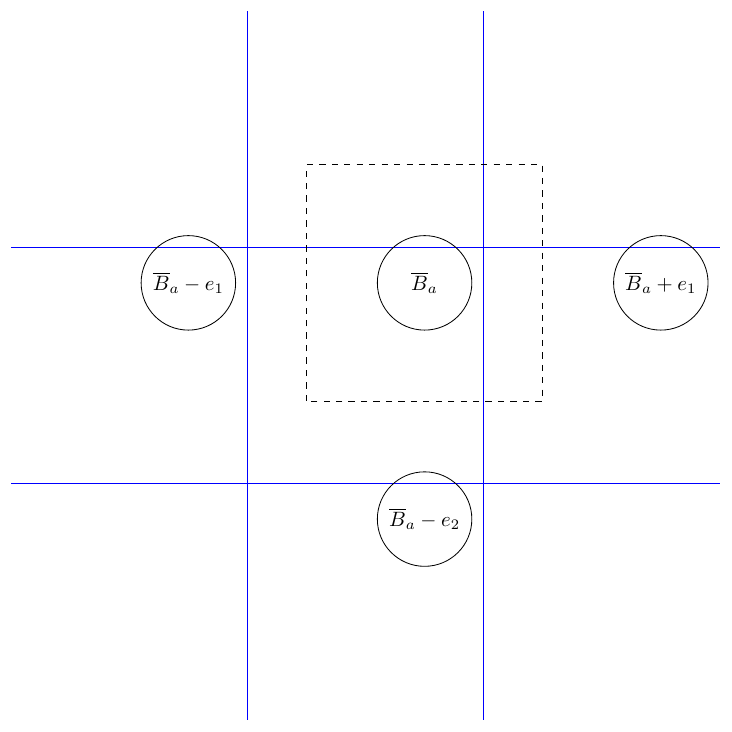}
    \caption{Picture of our domain. The periodic function $a$ is supported on the periodic ball denoted by circles. We select as representative the ball $\bar{B}_a\subset \R^n$, and we work on the domain $D$, whose borders are marked with a dashed line. The vertical lines correspond to $x_1=\pm \frac{1}{2}$, and the horizontal lines correspond to $x_2=\pm \frac{1}{2}$.}
    \label{figureatoms}
\end{figure}

We will use the bound \eqref{eq:atomHpestimategeneral}
for $x\in \bar{B}_a^*$, where $\bar{B}_a^*$ has the same center as $\bar{B}_a$ and twice its radius. To deal with $x\in D\setminus B_a^*$, we consider two cases.

(ii.1) Firstly, we treat the case $\zeta\leq \frac{1}{4}$. Since $\zeta$ and $|\bar{B}_a|$ are small enough, the support of $\Psi_\zeta(x-y)$ intersects only $\bar{B}_a$ in the set of balls $\{\bar{B}_a+k, k\in \Z^n\}$. Thus
$$(a*\Psi_\zeta)(x)=\int_{\bar{B}_a} a(y)\Psi_\zeta(x-y)dy.$$
We use the cancellation of moments (Property 3. of Definition \ref{def:atoms}) to write
$$\int_{\overline{B}_a} a(y)\Psi_\zeta(x-y)dy=\int_{\overline{B}_a}a(y)[\Psi_\zeta(x-y)-P_{x,\Psi_\zeta,d}(y)]dy, $$
where $P_{x,\Psi_\zeta,d}$ is the degree $d=\lfloor n(\frac{1}{p}-1)\rfloor$ Taylor polynomial of the function $y\mapsto \Psi_\zeta(x-y)$ expanded over $\bar{x}$, the center of $\overline{B}_a$. By properties of the Taylor expansion, we have
$$|\Psi_\zeta(x-y)-P_{x,\Psi_\zeta,d}(y)|\lesssim \frac{|y-\bar{x}|^{d+1}}{\zeta^{d+1+n}}.$$
Let $r$ be the radius of $\overline{B}_a$.
Recall $y\in \overline{B}_a, x\not\in \overline{B}_a^*$. 
In case $|x-y|\le \zeta$, which means in particular that $r\le \zeta$, we have 
$|x-\bar{x}| \le |x-y| + |\bar{x}-y| \le \zeta +r \le 2 \zeta$. In case $|x-y|> \zeta$ we have $\Psi_\zeta(x-y)=0$. 
Therefore we have for any $x\in D\setminus \overline{B}_a^*$ and any $\zeta<\frac{1}{4}$
\[
|(a*\Psi_\zeta)(x)| = \left| \int_{\overline{B}_a}a(y)[\Psi_\zeta(x-y)-P_{x,\Psi_\zeta,d}(y)]dy \right| \lesssim \norm{{a}}_{L^\infty} \left| \int_{\overline{B}_a}  \frac{r^{d+1}}{|x-\bar{x}|^{d+1+n}} dy\right|, 
\]
which gives via Property 2. of Definition \ref{def:atoms}

$$\sup_{0<\zeta<\frac{1}{4}}|a*\Psi_\zeta(x)| \lesssim r^{-\frac{n}{p}}\frac{r^{d+1+n}}{|x-\bar{x}|^{d+1+n}}.$$

Altogether, we have, using that $d=\lfloor n(\frac{1}{p}-1)\rfloor >n(\frac{1}{p}-1) -1$

\begin{equation}\label{eq:Hpboundsmallatomlocalmax}
\begin{aligned}
\int_{\T^2} \sup_{0<\zeta<\frac{1}{4}}|a*\Psi_\zeta(x)|^pdx=&\int_{\bar{B}_a^*} (m_\Psi a(x))^pdx+\int_{D\setminus \bar{B}_a^*} \sup_{0<\zeta<\frac{1}{4}}|a*\Psi_\zeta(x)|^pdx\\
\lesssim& \int_{\bar{B}_a^*} |\bar{B}_a|^{-1}+\int_{2r}^{\sqrt{n}}r^{-n} \frac{r^{p(d+1+n)}}{z^{p(d+1+n)-(n-1)}}dz\lesssim 1.
\end{aligned}
\end{equation}

(ii.2) We study now the case $x\in D\setminus \bar{B}_a^*$, $\zeta\geq \frac{1}{4}$. We note that, for any fixed $x$, the support of $\Psi_\zeta(x-y)$ may intersect more than one ball in the set of balls $\{\bar{B}_a+k, k\in \Z^n\}$. We denote $K_x$ as the set of $k\in \Z^n$ such that $(\supp \Psi_\zeta(x-y))\cap (\bar{B}_a+k)\neq \emptyset$.
Then $|K_x|\lesssim \zeta^n$. We write

$$(a*\Psi_\zeta)(x)=\sum_{k\in K_x}\int_{\bar{B}_a+k} a(y)\Psi_\zeta(x-y)dy=\sum_{k\in K_x}\int_{\bar{B}_a+k}a(y)[\Psi_\zeta(x-y)-P^k_{x,\Psi_\zeta,d}(y)]dy, $$
where $P^k_{x,\Psi_\zeta,d}$ is the degree $d=\lfloor n(\frac{1}{p}-1)\rfloor$ Taylor polynomial of the function $y\mapsto \Psi_\zeta(x-y)$ expanded over $\bar{x}+k$, the center of $\bar{B}_a+k$. By properties of the Taylor expansion, we have
$$|\Psi_\zeta(x-y)-P^k_{x,\Psi_\zeta,d}(y)|\lesssim \frac{|y-\bar{x}|^{d+1}}{\zeta^{d+1+n}}.$$
We bound
$$|(a*\Psi_\zeta)(x)|\lesssim \zeta^n r^{-\frac{n}{p}}r^{d+1+n}\zeta^{-(d+1+n)}\lesssim 1,$$
since $r<1,-\frac{n}{p}+d+1+n\geq 0$ and $\zeta\geq \frac{1}{4}, n-d-1-n\leq 0$. This implies 

\begin{equation}\label{eq:Hpboundsmallatomglobalmax}
\begin{aligned}
\int_{\T^2} \sup_{\frac{1}{4}<\zeta<\infty}|a*\Psi_\zeta(x)|^pdx=&\int_{\bar{B}_a^*} (m_\Psi a(x))^pdx+\int_{D\setminus \bar{B}_a^*} \sup_{\frac{1}{4}<\zeta<\infty}|a*\Psi_\zeta(x)|^pdx\\
\lesssim& \int_{\bar{B}_a^*} |\bar{B}_a|^{-1}+\int_{D\setminus \bar{B}_a^*} 1 dx\lesssim 1.
\end{aligned}
\end{equation}
From $\eqref{eq:Hpboundbigatoms},\eqref{eq:Hpboundsmallatomlocalmax}$ and $\eqref{eq:Hpboundsmallatomglobalmax}$, we conclude

\begin{align*}
\norm{a}_{\Hp}^p\lesssim 1.
\end{align*}
\epf
\begin{prop}\label{prop:sumatomsinHp}
If $\{a_k\}$ is a collection of $\Hp$ atoms and $\{\lambda_k\}$ is a sequence of numbers with $\sum|\lambda_k|^p<\infty,$ then the series 
$$f=\sum a_k\lambda_k$$
converges in $\fancyD'(\T^n)$, and its sum belongs to $\Hp$, with $\norm{f}_{\Hp}\leq c(\sum\lambda_k^p)^{\frac{1}{p}}$.
\end{prop}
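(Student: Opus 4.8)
The statement is the standard ``atomic series'' half of the atomic decomposition theorem, and the plan is to combine Proposition~\ref{prop:atomicbound} with the completeness of $\Hp$ (Proposition~\ref{prop:mainHpcomplete}, proved above). First I would establish convergence of the series in $\fancyD'(\T^n)$. Since $\Hp\hookrightarrow\fancyD'(\T^n)$ continuously (the preceding Proposition), it suffices to show that the partial sums $S_M=\sum_{k\le M}\lambda_k a_k$ form a Cauchy sequence in $\Hp$ with respect to the metric $d(f,g)=\norm{f-g}_{\Hp}^p$. For $M<M'$ we have, using $p\le 1$ so that $(\sum b_k)^p\le\sum b_k^p$ applied to the maximal function $m_\Psi$ exactly as in the proof of Proposition~\ref{prop:mainHpcomplete},
\[
\norm{S_{M'}-S_M}_{\Hp}^p=\int_{\T^n}\Big(m_\Psi\sum_{k=M+1}^{M'}\lambda_k a_k(x)\Big)^p dx\le\sum_{k=M+1}^{M'}|\lambda_k|^p\int_{\T^n}(m_\Psi a_k(x))^p dx=\sum_{k=M+1}^{M'}|\lambda_k|^p\norm{a_k}_{\Hp}^p.
\]
By Proposition~\ref{prop:atomicbound}, $\norm{a_k}_{\Hp}\le C$ uniformly, so this is $\le C^p\sum_{k>M}|\lambda_k|^p$, which tends to $0$ as $M\to\infty$ because $\sum|\lambda_k|^p<\infty$. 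Hence $(S_M)$ is Cauchy in the complete metric space $\Hp$, so it converges there to some $f\in\Hp$, and by continuity of the embedding the same limit holds in $\fancyD'(\T^n)$; this also gives convergence of the series in $\fancyD'(\T^n)$ as claimed.

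For the norm bound, I would pass to the limit in the subadditivity estimate: for every $M$,
\[
\norm{S_M}_{\Hp}^p\le\sum_{k=1}^{M}|\lambda_k|^p\norm{a_k}_{\Hp}^p\le C^p\sum_{k=1}^{\infty}|\lambda_k|^p,
\]
and since $S_M\to f$ in $\Hp$ (hence $\norm{S_M}_{\Hp}\to\norm{f}_{\Hp}$, or alternatively one applies Fatou's lemma to $m_\Psi S_M\to m_\Psi f$ pointwise, which follows from $\fancyD'$-convergence together with the fixed test function $\Psi$), we obtain $\norm{f}_{\Hp}^p\le C^p\sum_k|\lambda_k|^p$, i.e.\ $\norm{f}_{\Hp}\le C\big(\sum_k|\lambda_k|^p\big)^{1/p}$ with $c=C$ the constant from Proposition~\ref{prop:atomicbound}.

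\textbf{Main obstacle.} There is no deep obstacle here — the proof is essentially bookkeeping — but the one point that deserves care is the interchange of $m_\Psi$ with the infinite sum and the justification that $m_\Psi S_M(x)\to m_\Psi f(x)$ pointwise (or at least that $\liminf m_\Psi S_M \ge m_\Psi f$ a.e., which is all Fatou needs). This uses that convolution against the \emph{fixed} Schwartz function $\Psi_\zeta$ is continuous on $\fancyD'(\T^n)$ for each $\zeta$, so $S_M*\Psi_\zeta(x)\to f*\Psi_\zeta(x)$, and then taking $\sup_\zeta$ is lower semicontinuous under pointwise limits. One should also note that the maximal-function subadditivity $(m_\Psi\sum g_k)^p\le\sum(m_\Psi g_k)^p$ used in the first display is valid for finite sums for free and extends to the relevant countable sums by monotone convergence, exactly as in the completeness proof above; so no new ideas beyond those already deployed for Proposition~\ref{prop:mainHpcomplete} are required.
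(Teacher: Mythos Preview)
Your proof is correct and follows essentially the same route as the paper: both use the subadditivity $(m_\Psi\sum g_k)^p\le\sum(m_\Psi g_k)^p$ for $p\le1$ together with the uniform atom bound of Proposition~\ref{prop:atomicbound} to control finite partial sums, and then conclude convergence in $\Hp$ (hence in $\fancyD'$) from the resulting Cauchy property. The paper's write-up is terser---it simply records the finite-sum estimate and remarks that this ``shows the unconditional convergence of the series in the $\Hp$ metric, thus also in $\fancyD'$''---whereas you spell out the Cauchy/completeness step and the passage to the limit in the norm bound, but there is no substantive difference.
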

\bpf
If the sum is finite, we have 
$$m_\Psi f\leq \sum |\lambda_k|m_\psi a_k,$$
and 
$$\Big(\sum |\lambda_k|m_\psi a_k\Big)^p\leq \sum |\lambda_k|^p(m_\Psi a_k)^p,$$
since $0<p\leq 1$. Integrating these inequalities and using Proposition \ref{prop:atomicbound}, we obtain 
$$\norm{f}_{\Hp}\leq c(\sum\lambda_k^p)^{\frac{1}{p}}.$$
This shows the unconditional convergence of the series in the $\Hp$ metric, thus also in $\fancyD'$.
\epf

\begin{cor}\label{cor:LinftyinHp}
A function $f\in L^\infty(\T^n)$ satisfying condition \ref{itematomscancellations} in Definition \ref{def:atoms}, satisfies the bound 
$$\norm{f}_{\Hp}\leq C|\bar{B}_f|^{\frac{1}{p}}\norm{f}_{L^\infty}.$$
\end{cor}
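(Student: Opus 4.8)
The plan is to deduce the bound from Proposition \ref{prop:atomicbound} by normalising $f$ to an $\Hp$-atom. If $\|f\|_{L^\infty}=0$ then $f\equiv 0$ and the inequality is trivial, so assume $\|f\|_{L^\infty}>0$; note also that $|\bar B_f|>0$ since $\bar B_f$ is a ball of positive radius (or $B(0,\sqrt n)$ when $\supp f=\T^n$). Set
\[
a:=\frac{f}{|\bar B_f|^{1/p}\,\|f\|_{L^\infty}}.
\]
Since multiplication by a nonzero constant changes neither the support nor (hence) the choice of representative ball, we have $\bar B_a=\bar B_f$.

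Next I would verify that $a$ is an $\Hp$-atom in the sense of Definition \ref{def:atoms}. Condition 1 holds because $\|a\|_{L^\infty}=|\bar B_f|^{-1/p}=|\bar B_a|^{-1/p}$. Condition \ref{itematomscancellations} is invariant under scaling by a constant: if $\bar B_a$ is a small ball, then for every multiindex $\beta$ with $|\beta|\leq n(p^{-1}-1)$ one has $\int_{\bar B_a}x^\beta a=\bigl(|\bar B_f|^{1/p}\|f\|_{L^\infty}\bigr)^{-1}\int_{\bar B_f}x^\beta f=0$ by the hypothesis on $f$; and if $\bar B_a$ is not a small ball there is nothing to check. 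Hence $a$ is an $\Hp$-atom, and Proposition \ref{prop:atomicbound} gives $\|a\|_{\Hp}\leq C$ with $C$ the uniform atomic constant.

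Finally I would use the homogeneity of the $\Hp$-quasinorm: for any scalar $\lambda$ one has $m_\Psi(\lambda g)=|\lambda|\,m_\Psi g$ pointwise, so $\|\lambda g\|_{\Hp}=|\lambda|\,\|g\|_{\Hp}$ (this relies only on the degree-one homogeneity of $\|\cdot\|_{L^p}$, which holds also for $0<p<1$). Applying this with $\lambda=|\bar B_f|^{1/p}\|f\|_{L^\infty}$ and $g=a$ yields
\[
\|f\|_{\Hp}=|\bar B_f|^{1/p}\,\|f\|_{L^\infty}\,\|a\|_{\Hp}\leq C\,|\bar B_f|^{1/p}\,\|f\|_{L^\infty}.
\]
This is a routine normalisation argument, so there is no real obstacle; the only points deserving care are the degenerate case $f\equiv0$, the distinction of whether $\bar B_f$ is a small ball (so that condition \ref{itematomscancellations} is either inherited or vacuous), and recording the positive homogeneity of the quasinorm.
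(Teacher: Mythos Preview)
Your proof is correct and is essentially the same normalisation argument as in the paper: the paper sets $\lambda=|\bar B_f|^{1/p}\|f\|_{L^\infty}$, notes that $\lambda^{-1}f$ is an atom, and then invokes Proposition \ref{prop:sumatomsinHp} (applied to a single term) to obtain $\|f\|_{\Hp}\lesssim\lambda$. The only cosmetic difference is that you cite Proposition \ref{prop:atomicbound} together with the homogeneity of the quasinorm directly, whereas the paper routes this through the one-term case of Proposition \ref{prop:sumatomsinHp}.
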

\bpf
Taking $\lambda=\norm{f}_{L^\infty}|\bar{B}_f|^\frac{1}{p},$ it is clear that $\lambda^{-1} f$ is an atom. Therefore, we can apply Proposition \ref{prop:sumatomsinHp} to $f=(\lambda^{-1}f)\lambda$, giving the bound 
$$\norm{f}_{\Hp}\lesssim \lambda = \norm{f}_{L^\infty}|\bar{B}_f|^\frac{1}{p}.$$ 
\epf
We are interested now in bounding the $\Hp$ norm of a concentrated function not satisfying \ref{itematomscancellations} in Definition \ref{def:atoms}. One can do it straightforwardly by using the embedding $\Hp\subset L^q$ for any $0<p\leq 1<q$, but the resulting bound turns out to be insufficient for our purposes.

Instead, we will correct locally the moments of the function,  without increasing its support, and consequently obtain better bounds.

For this, we want to be able to reproduce any value of a moment with a function.
Let $\alpha$ be a multiindex, $|\alpha|\le N$. For a compactly supported function $f$, we denote its $\alpha$-moment by $m_\alpha$, i.e.\
$m_\alpha:=\int_{\R^n} x^\alpha f(x)dx$. Denote $|m|:=\max_{|\alpha|\leq N}\left|\int_{\R^n} x^\alpha f(x) dx \right|.$ Observe that the set $\{m_\alpha | \; |\alpha|\le N\} =\R^{{\left(\begin{smallmatrix} N+n \\ n \end{smallmatrix}\right)}}$.
 Let us  recall \cite[Lemma 3.11]{buckmodena2024compactly}

\begin{lemma}\label{lem:BMpolynomia}
For every $N\in \N$, there is a degree $N$ polynomial $p_N$ on $\R$ such that the following holds. For all $\eps\in(0,1]$ and $\bar{x}\in\R^n$, there is a linear operator $m \to L_{\eps,\bar{x},m}$ that maps each choice of order $\leq N$ moments $m\in \R^{{\left(\begin{smallmatrix} N+n \\ n \end{smallmatrix}\right)}}$ to a function $L_{\eps,\bar{x},m}\in C^\infty_c(B(\bar{x},\eps))$,
so that 
$$\int_{\R^n} x^\alpha L_{\eps,\bar{x},m}(x)dx=m_{\alpha} \text{ for any multiindex } |\alpha|\leq N,$$
and
$$\norm{L_{\eps,\bar{x},m}}_{L^\infty}\leq \frac{p_N(|\bar{x}|)}{\eps^{n+N}}|m|.$$
\end{lemma}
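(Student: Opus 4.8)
The plan is to construct the operator first in the normalized situation $\bar x=0$, $\varepsilon=1$, and then transport it to a general pair $(\bar x,\varepsilon)$ by an affine change of variables, carefully tracking how the prescribed moments and the sup-norm transform. \emph{Step 1 (model operator).} Let $D=\binom{N+n}{n}$ be the dimension of the space of polynomials of degree $\le N$ in $n$ variables, indexed by multiindices $|\alpha|\le N$. Consider the linear map $T\colon C^\infty_c(B(0,1))\to\R^D$, $Tg=\big(\int_{\R^n}x^\alpha g(x)\,dx\big)_{|\alpha|\le N}$. The functionals $g\mapsto\int x^\alpha g$ are linearly independent on $C^\infty_c(B(0,1))$: a vanishing combination $\sum_\alpha c_\alpha\int x^\alpha g=0$ for all $g$ forces the polynomial $\sum_\alpha c_\alpha x^\alpha$ to vanish on $B(0,1)$, hence $c\equiv 0$. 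Thus $T$ is onto, and we may fix finitely many $\psi_1,\dots,\psi_D\in C^\infty_c(B(0,1))$ for which the matrix $A=\big(\int x^\alpha\psi_\beta\big)_{\alpha,\beta}$ is invertible. Setting $L_{1,0,m}:=\sum_\beta(A^{-1}m)_\beta\,\psi_\beta$ gives a linear operator realizing the moments $m$, with $\|L_{1,0,m}\|_{L^\infty}\le C_0(N,n)\,|m|$, where $C_0$ depends only on $\|A^{-1}\|$ and $\max_\beta\|\psi_\beta\|_{L^\infty}$.

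\emph{Step 2 (change of variables).} For a candidate $L$ supported in $B(\bar x,\varepsilon)$ write $L(x)=\varepsilon^{-n}G\big(\varepsilon^{-1}(x-\bar x)\big)$ with $G\in C^\infty_c(B(0,1))$. A direct change of variables gives $\int(x-\bar x)^\gamma L(x)\,dx=\varepsilon^{|\gamma|}\int y^\gamma G(y)\,dy$, i.e. the $\bar x$-centered moments of $L$ are $\varepsilon^{|\gamma|}$ times the origin-centered moments of $G$. Separately, the $\bar x$-centered moments $\mu_\gamma:=\int(x-\bar x)^\gamma L$ and the desired moments $m_\alpha:=\int x^\alpha L$ are related by the binomial shift identities $m_\alpha=\sum_{\gamma\le\alpha}\binom{\alpha}{\gamma}\bar x^{\alpha-\gamma}\mu_\gamma$ and, inverting, $\mu_\gamma=\sum_{\beta\le\gamma}\binom{\gamma}{\beta}(-\bar x)^{\gamma-\beta}m_\beta$; these are genuinely mutually inverse since they are the matrices of the translation operators $T_{\bar x}$ and $T_{-\bar x}$.

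\emph{Step 3 (assembly and estimate).} Given target moments $m$, define successively $\mu_\gamma:=\sum_{\beta\le\gamma}\binom{\gamma}{\beta}(-\bar x)^{\gamma-\beta}m_\beta$, then $\mu'_\gamma:=\varepsilon^{-|\gamma|}\mu_\gamma$, then $G:=L_{1,0,\mu'}$ from Step 1, and finally $L_{\varepsilon,\bar x,m}(x):=\varepsilon^{-n}G\big(\varepsilon^{-1}(x-\bar x)\big)$. By construction this lies in $C^\infty_c(B(\bar x,\varepsilon))$, depends linearly on $m$, and unwinding Step 2 shows $\int_{\R^n}x^\alpha L_{\varepsilon,\bar x,m}=m_\alpha$ for all $|\alpha|\le N$. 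For the bound: since $\varepsilon\le1$ and $|\gamma|\le N$ we get $|\mu'|\le\varepsilon^{-N}|\mu|$; the shift identity yields $|\mu|\le C_1(N)\,(1+|\bar x|)^N|m|$; and $\|L_{\varepsilon,\bar x,m}\|_{L^\infty}=\varepsilon^{-n}\|G\|_{L^\infty}\le\varepsilon^{-n}C_0|\mu'|$. Chaining,
$$\|L_{\varepsilon,\bar x,m}\|_{L^\infty}\le\frac{C_0C_1\,(1+|\bar x|)^N}{\varepsilon^{n+N}}\,|m|,$$
so one may take $p_N(t):=C_0C_1\,(1+t)^N$, a polynomial of degree $N$ independent of $\varepsilon$ and $\bar x$.

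\emph{Main obstacle.} There is no deep difficulty here; the single genuinely non-formal ingredient is the non-degeneracy of the moment map, i.e. the existence of $\psi_1,\dots,\psi_D$ with $A$ invertible, which rests on the linear independence of the monomial functionals on $C^\infty_c(B(0,1))$. The remaining work is bookkeeping: ensuring the two rescalings — the $\varepsilon^{-|\gamma|}$ correction to the moments and the $\varepsilon^{-n}$ amplitude of the rescaled bump — combine into exactly $\varepsilon^{-(n+N)}$, and that the $\bar x$-dependence collects into a degree-$N$ polynomial uniformly in $\varepsilon$ and $\bar x$.
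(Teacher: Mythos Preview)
Your argument is correct and complete. The paper itself does not prove this lemma; it simply quotes it from \cite{buckmodena2024compactly} (Lemma 3.11 there), so there is no in-paper proof to compare against. Your construction --- building the model operator on $B(0,1)$ via linear independence of the monomial functionals, then transporting by the affine map $x\mapsto \bar x+\varepsilon y$ and tracking how centered moments, shifted moments, and the sup-norm transform --- is the standard route and yields exactly the claimed bound with $p_N(t)=C(1+t)^N$. The bookkeeping in Steps~2--3 is accurate: the $\varepsilon^{-|\gamma|}$ from the moment scaling combines with the $\varepsilon^{-n}$ amplitude factor to give $\varepsilon^{-(n+N)}$ (using $\varepsilon\le 1$), and the binomial shift contributes precisely a polynomial of degree $N$ in $|\bar x|$.
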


Now we can control the $\Hp$-norm of concentrated functions not satisfying \ref{itematomscancellations} in Definition \ref{def:atoms} by using the following proposition. It is a 'periodic version' of \cite[Proposition 3.9]{buckmodena2024compactly}. 

\begin{prop}\label{prop:Hpboundnonvanishingatoms}
Let $0<p\leq 1$. Let $f\in L^\infty(\T^n)$ be a function, and $\bar{B}_f$ its representative ball according to Definition \ref{def:Rnball}. Assume that $\bar{B}_f$ has center $\bar{x}$ and radius $\eps$, with $0<\eps<\frac{1}{2}$. Let $N= \lfloor n(\frac{1}{p}-1)\rfloor$. Then 
\begin{align*}
\norm{f}_{\Hp}^p\leq &C(p,n)\big(\eps^n\norm{f}_{L^\infty}^p
+|\log\eps|\max_{|\alpha|\leq N}\left|\int_{\bar{B}_f} x^\alpha f^\text{ext}(x) dx \right|^p\big).
\end{align*}
\end{prop}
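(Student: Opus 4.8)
The idea is to subtract from $f$ a correction that kills its moments on the representative ball $\bar B_f$, apply Corollary~\ref{cor:LinftyinHp} to the corrected function (which is now an $\Hp$-atom up to a scalar), and estimate the correction term directly by decomposing it into rescaled copies. First I would invoke Lemma~\ref{lem:BMpolynomia} with the given center $\bar x$ and radius $\eps$, and with $m_\alpha := \int_{\bar B_f} x^\alpha f^\text{ext}(x)\,dx$ for $|\alpha|\le N$, to produce $L := L_{\eps,\bar x,m}\in C^\infty_c(B(\bar x,\eps))$ having exactly the same low-order moments as $f$ on $\bar B_f$, with $\norm{L}_{L^\infty}\le p_N(|\bar x|)\eps^{-(n+N)}|m|$; since $|\bar x|\le \sqrt n$ is bounded, $p_N(|\bar x|)\le C(p,n)$. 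Then $g := f - L$ (viewed as a periodic function, extending $L$ periodically) is supported on the same ball, has vanishing moments up to order $N = \lfloor n(p^{-1}-1)\rfloor$, and $\bar B_f$ is a small ball because $|\bar B_f|\lesssim \eps^n < 1$. Hence $g$ satisfies condition~\ref{itematomscancellations} of Definition~\ref{def:atoms}, and Corollary~\ref{cor:LinftyinHp} gives $\norm{g}_{\Hp}\le C|\bar B_f|^{1/p}\norm{g}_{L^\infty}\lesssim \eps^{n/p}(\norm{f}_{L^\infty}+\norm{L}_{L^\infty})$.

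The remaining — and main — task is to bound $\norm{L}_{\Hp}$, where $L$ is a single bump on $B(\bar x,\eps)$ of height $\sim \eps^{-(n+N)}|m|$ with \emph{no} cancellation, so Corollary~\ref{cor:LinftyinHp} alone would only give $\eps^{n/p}\eps^{-(n+N)}|m| = \eps^{n(1/p - 1) - N}|m|$, which is too lossy since $N$ is chosen just above $n(p^{-1}-1)-1$. The trick is to write $L$ as a telescoping sum of functions at dyadic scales: set $L = \sum_{j} \psi_j$ where $\psi_j$ is supported on $B(\bar x, 2^{-j})$ for $j$ ranging over $\sim |\log\eps|$ scales from $\eps$ up to a fixed scale $\sim 1$, with each intermediate $\psi_j$ built to have vanishing moments up to order $N$ on its ball (again using Lemma~\ref{lem:BMpolynomia} to redistribute the moments from one scale to the next, so that the ``remainder'' moment $m$ is passed along and the difference between consecutive corrections is moment-free), and with $\norm{\psi_j}_{L^\infty}\lesssim 2^{j(n+N)}|m|$. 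Each $\psi_j$ with $j$ large (small ball) is an $\Hp$-atom up to the factor $C|B(\bar x,2^{-j})|^{1/p}\norm{\psi_j}_{L^\infty}\lesssim 2^{-jn/p}2^{j(n+N)}|m| = 2^{j(n+N - n/p)}|m|$; since $n + N - n/p \le 0$ by the choice $N \le n(p^{-1}-1)$, this factor is bounded by $|m|$ uniformly in $j$. The last $\psi_j$ lives on a non-small ball and is handled by \eqref{eq:Hpboundbigatoms}-type reasoning, contributing $\lesssim |m|$. Summing in the $\Hp$ quasinorm raised to the $p$-th power over the $\sim|\log\eps|$ scales yields $\norm{L}_{\Hp}^p \lesssim |\log\eps|\,|m|^p$.

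I would then combine the two pieces: $\norm{f}_{\Hp}^p \le \norm{g}_{\Hp}^p + \norm{L}_{\Hp}^p \lesssim \eps^n(\norm{f}_{L^\infty}+\norm{L}_{L^\infty})^p + |\log\eps|\,|m|^p$. The term $\eps^n\norm{L}_{L^\infty}^p = \eps^n\cdot\eps^{-p(n+N)}|m|^p = \eps^{n - p(n+N)}|m|^p$ needs $n - p(n+N)\ge 0$, i.e.\ $N \le n(p^{-1}-1)$, which holds by the floor; it is then absorbed into the $|\log\eps|\,|m|^p$ term (indeed $\eps^{n-p(n+N)}\le 1 \le |\log\eps|$ for $\eps<\tfrac12$). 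This gives exactly the claimed bound $\norm{f}_{\Hp}^p \le C(p,n)\big(\eps^n\norm{f}_{L^\infty}^p + |\log\eps|\max_{|\alpha|\le N}|\int_{\bar B_f} x^\alpha f^\text{ext}|^p\big)$. The delicate point throughout is the careful scale-by-scale moment bookkeeping in constructing the $\psi_j$, and checking that the borderline exponent $n+N-n/p$ is genuinely $\le 0$ so that the $\sim|\log\eps|$ summands each stay $O(|m|)$ rather than accumulating an extra power of $\eps$; the $|\log\eps|$ is the price of the borderline case and cannot in general be removed.
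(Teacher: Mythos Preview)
Your proposal is correct and follows essentially the same approach as the paper: subtract the moment-corrector $L_{\eps,\bar x,m}$ from Lemma~\ref{lem:BMpolynomia}, then telescope $L_{\eps,\bar x,m}$ through the dyadic scales $\eps_i=2^{-i}$ via $\sum_i(L_{\eps_{i+1},\bar x,m}-L_{\eps_i,\bar x,m})+L_{\eps_1,\bar x,m}$, each difference being moment-free and hence controlled by Corollary~\ref{cor:LinftyinHp}, with the borderline exponent $n-p(n+N)\ge 0$ guaranteeing each summand contributes $\lesssim|m|^p$ and the $\sim|\log\eps|$ terms giving the logarithmic factor. The only cosmetic difference is that the paper writes the full decomposition of $f$ in one line rather than first splitting $f=g+L$ and then decomposing $L$; your minor claim that $1\le|\log\eps|$ for $\eps<\tfrac12$ is off by a harmless constant ($|\log(1/2)|=\log 2<1$), but this is absorbed into $C(p,n)$.
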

\bpf
The idea is to add a local corrector $L$ to $f$ so that it satisfies the condition \ref{itematomscancellations} in Definition \ref{def:atoms}. We will do it in one period centered at $\bar{x}$, and extend the correction periodically.
We choose a family of shrinking radii 
$$\eps_i=2^{-i}\text{ for } 1\leq i<K,$$
where $K=-\lfloor \log_2(\eps)\rfloor\lesssim |\log\eps|$, and we set $\eps_K=\eps$, so that
$$\frac{1}{2}=\eps_1>\eps_2>\dots>\eps_K=\eps.$$
Using the functions from Lemma \ref{lem:BMpolynomia}, we decompose
\begin{align*}
f=f-L_{\eps,\bar{x},m}+\sum_{i=1}^{K-1}L_{\eps_{i+1},\bar{x},m}-L_{\eps_i,\bar{x},m}+L_{\eps_1,\bar{x},m},
\end{align*}
and we extend this decomposition periodically. We use Corollary \ref{cor:LinftyinHp} and Lemma \ref{lem:BMpolynomia} to estimate
$$\norm{f-L_{\eps,\bar{x},m}}_{\Hp}^p\lesssim \eps^n \left(\norm{f}_{L^\infty}+\frac{1}{\eps^{n+N}}|m|\right)^p
\lesssim \eps^n \norm{f}_{L^\infty}^p+|m|^p,$$
since $\eps<1$ and $N\leq n(p^{-1}-1)\Rightarrow p(n+N)\leq n$.
We use again Corollary \ref{cor:LinftyinHp} and Lemma \ref{lem:BMpolynomia} to estimate
$$\sum_{i=1}^{K-1}\norm{L_{\eps_{i+1},\bar{x},m}-L_{\eps_i,\bar{x},m}}_{\Hp}^p\lesssim \sum_{i=1}^{K-1}\frac{2^{-in}}{2^{-(i+1)p(n+N)}}|m|^p\lesssim \sum_{i=1}^{K-1}2^{-i(n-p(n+N))}|m|^p.$$
We note that the power of $2$ in the previous series is always negative. Therefore, 
$$\sum_{i=1}^{K-1}\norm{L_{\eps_{i+1},\bar{x},m}-L_{\eps_i,\bar{x},m}}_{\Hp}^p\lesssim K|m|^p\lesssim |\log\eps||m|^p.$$
Finally,
$$\norm{L_{\eps_{1},\bar{x},m}}_{\Hp}^p\lesssim \norm{L_{\eps_{1},\bar{x},m}}_{L^\infty}^p\lesssim |m|^p,$$
concluding the proof.
\epf

\begin{prop}\label{prop:atomicdecomposition}
Every $f\in \Hp$ can be written as $f=\sum_k \lambda_k a_k$, where $a_k$ are $\Hp$ atoms, and $\lambda_k$ are real numbers. The sum converges to $f$ strongly in $\Hp$, and 
\begin{equation}\label{eq:atomicdecomposition}
\sum_k |\lambda_k|^p\lesssim \norm{f}_{\Hp}^p.
\end{equation}
\end{prop}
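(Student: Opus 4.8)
\bpf
The plan is to run the classical Calder\'on--Zygmund / Fefferman--Stein construction of an atomic decomposition (as in \cite[Chapter~III.2]{stein1993harmonic}, \cite{follandsteinhardy}), adapted to $\T^n$. The only genuinely periodic feature is that, at low levels, the level sets of the maximal function exhaust the whole torus; this will produce a single ``big-ball'' piece with no cancellation, which is exactly what Definition \ref{def:atoms} permits, so nothing extra is needed there.

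Concretely, I would first pass, via Lemma \ref{lem:hardycomparableseminorms}, to the grand maximal function $m\coloneqq m_{\fancyF_N}f$ with $N=N(p,n)$ large, and normalise $\norm{f}_{\Hp}=\norm{m}_{L^p}=1$. For $k\in\Z$ set $\Omega_k=\{x:m(x)>2^k\}$, open and decreasing, with $|\Omega_k|\le 2^{-kp}$ and with $f$ controlled by $2^k$ off $\Omega_k$. Let $k_0$ be the largest integer (if any) with $\Omega_{k_0}=\T^n$; since then $|\Omega_{k_0-1}|=|\T^n|=1$, the coefficient bound below will force $2^{k_0 p}\lesssim 1$. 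For each $k\ge k_0$ with $\Omega_k\subsetneq\T^n$ one takes a Whitney decomposition $\Omega_k=\bigcup_jQ_{k,j}$ of bounded overlap with $\mathrm{side}(Q_{k,j})\sim\mathrm{dist}(Q_{k,j},\Omega_k^c)$, a subordinate partition of unity $\eta_{k,j}$ (so $\sum_j\eta_{k,j}=\mathbf 1_{\Omega_k}$, $\norm{\na^l\eta_{k,j}}_\infty\lesssim\mathrm{side}(Q_{k,j})^{-l}$), lets $P_{k,j}$ be the degree-$\le N$ polynomial with $\int(f-P_{k,j})\eta_{k,j}x^\alpha=0$ for all $|\alpha|\le N$, and sets the bad parts $b_{k,j}=(f-P_{k,j})\eta_{k,j}$ and the good part $g_k=f-\sum_jb_{k,j}$. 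The standard estimate -- using that each $Q_{k,j}$ touches $\Omega_k^c$, where $m\le 2^k$ -- gives $\norm{P_{k,j}\eta_{k,j}}_\infty\lesssim 2^k$, hence $\norm{g_k}_\infty\lesssim 2^k$; moreover $g_k=f$ on $\Omega_k^c$ and $|\bigcap_k\Omega_k|=0$, so $g_k\to f$ a.e.

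Next I would telescope $f=g_{k_0}+\sum_{k\ge k_0}(g_{k+1}-g_k)$ a.e. The term $g_{k_0}$ is bounded by $\lesssim 2^{k_0}$ and supported on $\T^n$, hence equals $\lambda_{k_0}a_{k_0}$ for a big-ball atom $a_{k_0}$ (associated with $B(0,\sqrt n)$, no moment condition) and $|\lambda_{k_0}|\sim 2^{k_0}\lesssim 1$. Each difference $g_{k+1}-g_k=\sum_jb_{k,j}-\sum_ib_{k+1,i}$, plus the polynomial corrections produced by testing $\sum_jP_{k,j}\eta_{k,j}$ against the finer partition $\{\eta_{k+1,i}\}$, is then rearranged -- following \cite[Chapter~III.2]{stein1993harmonic} -- by grouping all pieces overlapping a given $Q_{k,j}$ into one function $A_{k,j}$, supported in a fixed dilate of $Q_{k,j}$, with $\norm{A_{k,j}}_\infty\lesssim 2^k$ and $\int A_{k,j}x^\alpha=0$ for $|\alpha|\le N\ge \lfloor n(1/p-1)\rfloor$. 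Whenever $|Q_{k,j}|\ge C_{at}$ I would instead regard $A_{k,j}$ as a big-ball atom and drop the cancellation, consistently with Definition \ref{def:atoms}; only $O(1)$ such cubes per level and $O(1)$ such levels occur (since $|\Omega_k|\le 2^{-kp}$), so this is harmless. Normalising, $A_{k,j}=\lambda_{k,j}a_{k,j}$ with $a_{k,j}$ an $\Hp$-atom and $|\lambda_{k,j}|\sim 2^k|Q_{k,j}|^{1/p}$.

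Finally, for the coefficient bound one computes $\sum_{k,j}|\lambda_{k,j}|^p\lesssim 1+\sum_k 2^{kp}\sum_j|Q_{k,j}|\lesssim 1+\sum_k 2^{kp}|\Omega_k|\lesssim 1+\int m^p\lesssim 1$, the penultimate step being the pointwise geometric-series bound $\sum_{k:\,m(x)>2^k}2^{kp}\lesssim m(x)^p$. For convergence, Proposition \ref{prop:sumatomsinHp} shows the partial sums $\sum_{k\le K,\,j}\lambda_{k,j}a_{k,j}$ are Cauchy in $\Hp$, so by completeness of $(\Hp,d)$ they converge in $\Hp$; since $\Hp\hookrightarrow\fancyD'$ and the partial sums also converge to $f$ in $\fancyD'$ (which follows from $g_K\to f$ a.e.\ together with the $\Hp$-control of the tail), the $\Hp$-limit must be $f$. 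The main obstacle is the third step: realising $A_{k,j}$ with its three properties simultaneously rests on the delicate estimates for the correction polynomials and on how consecutive partitions of unity interlock -- this is precisely where the size of $N$ and the cancellation degree $\lfloor n(1/p-1)\rfloor$ enter -- while everything else is bookkeeping plus the elementary distributional convergence facts.
\epf
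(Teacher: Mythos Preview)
Your approach is essentially the same as the paper's: both adapt the classical Calder\'on--Zygmund/Fefferman--Stein construction from \cite{stein1993harmonic} by introducing a threshold level (yours: the largest $k_0$ with $\Omega_{k_0}=\T^n$; the paper's: the smallest $j_0$ with $|O^{j_0}|<C$ for a small constant $C$), treating the bounded good part at that level as a single big-ball atom with coefficient $\lesssim\norm{f}_{\Hp}$ via Chebyshev, and then telescoping $\sum_k(g_{k+1}-g_k)$ into atoms exactly as in $\R^n$. One indexing slip: since the Whitney decomposition requires $\Omega_k\subsetneq\T^n$, your starting good part should be $g_{k_0+1}$ rather than $g_{k_0}$ (which is undefined as written), but this is cosmetic and changes nothing in the argument.
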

\bpf
The idea of the proof for the analogous result in $\Hp(\R^n)$ \cite{stein1993harmonic} is to use the Whitney covering lemma for each set $O^j=\{x:\mathcal{M}f>2^j\}$, and decompose $f=g_j+b_j$, with $\norm{g_j}_{L^\infty}\leq c 2^j$, and $b_j$ supported on $O^j$ and having the property \ref{itematomscancellations} in Definition \ref{def:atoms}. Then, it is observed that $\lim_{j\to\infty} g_j=f$ in $\Hp(\R^n)$, and an atomic decomposition of $\sum_j g_{j+1}-g_j$ is constructed. Lastly, equation \eqref{eq:atomicdecomposition} is checked.
 A complete proof in $\R^n$ can be found in \cite{stein1993harmonic}. The proof for $\Hp\cap L^1_{loc}(\R^n)$ functions can be found in pages $101-105$ together with $107-109$. The proof for a general $\Hp(\R^n)$ element is completed in pages $110-111$. 
 
 To adapt the proof from $\Hp(\R^n)$ to $\Hp$, we observe that we can decompose $f=g_j+b_j$ with the same procedure if $|O^j|<C$, for $C>0$ small enough. Since $|O^{j}|$ is decreasing and converging to $0$, it is enough to fix $j_0$ such that $|O^{j_0}|<C$. We choose the minimum possible value of $j_0$. Then, we can write 
 $$f=g_{j_0}+\sum_{j=j_0}^\infty g_{j+1}-g_j.$$

To obtain an atomic decomposition of $g_{j_0}$, we note that $c^{-1}2^{-j_0}g_{j_0}$ is an atom. To keep the bound \eqref{eq:atomicdecomposition}, we need to show that $2^{j_0}p\lesssim \norm{f}_{\Hp}^p$. Using that $j_0$ is minimal, we observe

$$2^{j_0}p\leq |O^{j_0-1}|C^{-1}2^p 2^{(j_0-1)p}\leq C^{-1}2^p \norm{f}_{\Hp}^p.$$
We can obain an atomic decomposition of $\sum_{j=j_0}^\infty g_{j+1}-g_j$ satisfying \eqref{eq:atomicdecomposition} proceeding as in $\R^n$. Combined with the decomposition of $g_{j_0}$, we have an atomic decomposition of $f$.

\epf

\begin{rem}\label{rem:highercancellationatoms}
It is possible to impose the property 2. in Definition \ref{def:atoms} for arbitrarily higher multiindices in the atomic decomposition given by Proposition \ref{prop:atomicdecomposition}. This does not add any difficulty to the proof.
\end{rem}

\subsection{Proof of Proposition \ref{prop:H1inL2}}
Fix any function $\varphi\in L^2$ with $\norm{\varphi}_{L^2}\leq 1$. It can be written as $\varphi=\curl g_1+\na g_2$ for some $g_1,g_2\in H^1$, with $\norm{g_1}_{H^1},\norm{g_2}_{H^1}\lesssim 1$. 

In the sense of distributions,
$$|\langle f, \varphi\rangle|\leq |\langle f, \curl g_1\rangle|+|\langle f, \na g_2\rangle|=|\langle \curl f, g_1\rangle|.$$
In two dimensions $H^1\hookrightarrow (\mathcal{H}^1)^*$, so 
$$|\langle f, \varphi\rangle|\lesssim \norm{\curl f}_{\mathcal{H}^1}.$$
Since $\varphi$ was arbitrary, this shows that 
$$\norm{f}_{L^2}\lesssim \norm{\curl f}_{\mathcal{H}^1}.$$

\subsection{Singular Integral Operators}
In this subsection, we show that singular integral operators are bounded on $\Hp$, as it occurs in $\Hp(\R^n)$. Since we are using  simultaneously periodic and non-periodic functions, we begin by clarifying some aspects regarding the Fourier transform. 
Given $f\in \fancyS(\R^n)$, and its periodic version ${f^\text{per}}(\cdot)=\sum_{k\in\Z^n} f(\cdot+k)\in \fancyD(\T^n)$, we have
\begin{align*}
\hat{f}(\xi)&=\int_{\R^n} e^{-i2\pi\xi \cdot x}f(x)dx,\quad &&\xi\in\R^n, \\
\hat{{f}}^\text{per}(\xi)&=\int_{\T^n} e^{-i2\pi\xi \cdot x}{f^\text{per}}(x)dx,\quad &&\xi\in\Z^n.
\end{align*}
Since $e^{-i2\pi\xi \cdot x}$ is also periodic, it is direct that $\hat{f}(\xi)=\hat{{f}}^\text{per}(\xi)$ $\forall \xi\in\Z^n.$ 

We consider now a periodic function $g$,
whose periodic extension to $\R^n$ is denoted by $g^\text{ext}$. The $\R^n$ convolution $f*g^\text{ext}$ is equal to the $\T^n$ convolution ${f}^\text{per}*_{\T^n}g$, so we have the identity
\begin{equation}\label{eq:convolutionidentity}
\begin{aligned}
f*g^\text{ext}(x)={f}^\text{per}*_{\T^n}g(x)=\sum_{\xi\in\Z}e^{i2\pi\xi\cdot x}\hat{f}^\text{per}(\xi)\hat{{g}}(\xi)=\sum_{\xi\in\Z}e^{i2\pi\xi\cdot x}\hat{f}(\xi)\hat{{g}}(\xi).    
\end{aligned}
\end{equation}

\begin{prop}\label{prop:PboundedHp}
Let $\ProL$ be the Leray projector, $0<p<\infty$, and $f\in \Hp$. Then, $$\norm{\ProL f}_{\Hp}\lesssim  \norm{ f}_{\Hp}.$$
\end{prop}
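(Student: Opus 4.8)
The plan is to prove that the Leray projector $\ProL$ is bounded on $\Hp(\T^n)$ by exploiting the identity \eqref{eq:convolutionidentity}, which reduces the action of a Fourier multiplier on periodic distributions to convolution with (the periodic extension of) a kernel on $\R^n$, and then invoking the known boundedness of Calder\'on--Zygmund / singular integral operators on $\Hp(\R^n)$. Concretely, the Leray projector has the matrix-valued multiplier symbol $m(\xi)_{jk} = \delta_{jk} - \xi_j\xi_k/|\xi|^2$, which is homogeneous of degree $0$, smooth away from the origin, and hence (up to the mean-value component handled trivially, since $f$ has zero mean) is given by convolution with a classical Calder\'on--Zygmund kernel $\Omega(x/|x|)/|x|^n$ plus a multiple of the identity. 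The point is that on $\T^n$ this convolution is, via \eqref{eq:convolutionidentity}, the same as convolution on $\R^n$ with the periodized kernel restricted to functions that happen to be periodic.

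First I would reduce to the atomic picture: by Proposition \ref{prop:atomicdecomposition} write $f = \sum_k \lambda_k a_k$ with $\sum |\lambda_k|^p \lesssim \norm{f}_{\Hp}^p$, and by Proposition \ref{prop:sumatomsinHp} it suffices to show $\norm{\ProL a}_{\Hp} \le C$ uniformly over $\Hp$-atoms $a$ — or rather, that $\ProL a$ admits an atomic decomposition with $\ell^p$-summable coefficients bounded by an absolute constant. For a single atom $a$ supported on a small ball $\bar B_a$ of radius $r$ and center $\bar x$, one splits $\ProL a = (\ProL a)\mathbf{1}_{\bar B_a^{**}} + (\ProL a)\mathbf{1}_{(\bar B_a^{**})^c}$ where $\bar B_a^{**}$ is a fixed dilate. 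The near part is handled by the $L^2$-boundedness of $\ProL$ (it is an $L^2$ multiplier with bounded symbol), giving, after rescaling, that it is a bounded multiple of an $\Hp$-atom (using the $L^\infty$–$L^2$–$\Hp$ interplay and the $L^2$ bound $\norm{\ProL a}_{L^2} \le \norm{a}_{L^2} \le |\bar B_a|^{1/2 - 1/p}$). The far part is controlled pointwise using the moment cancellation of $a$ (Property \ref{itematomscancellations}): subtracting the Taylor polynomial of the kernel, one gets decay $|\ProL a(x)| \lesssim r^{-n/p} r^{n+N+1}/|x - \bar x|^{n+N+1}$ on $D \setminus \bar B_a^{**}$ exactly as in the proof of Proposition \ref{prop:atomicbound}, which is $\Hp$-integrable since $d = N = \lfloor n(1/p - 1)\rfloor$ was chosen precisely so that $p(n+N+1) > n$. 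Big atoms (non-small balls) are trivial since then $\norm{\ProL a}_{L^\infty} \lesssim \norm{a}_{L^\infty} \lesssim |\bar B_a|^{-1/p}$ is bounded by an absolute constant and one uses $|\bar B_a| \ge C_{at}$ together with Corollary \ref{cor:LinftyinHp}, noting no cancellation is required for big balls.

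One subtlety I would address carefully: the periodized kernel $K^{\text{ext}}(x) = \sum_{k \in \Z^n} K(x+k)$ of a degree-$0$ homogeneous multiplier is not absolutely convergent, but its smoothing/off-diagonal part is, and the singular part near each lattice point is a genuine Calder\'on--Zygmund kernel; since we test against atoms supported in a single small ball and with enough vanishing moments, only finitely many lattice translates of the kernel interact with the support in the "$\zeta$ small" regime (exactly as in case (ii.1)–(ii.2) of the proof of Proposition \ref{prop:atomicbound}), so all sums are finite and the estimates go through. The main obstacle I anticipate is bookkeeping the matrix-valued nature and the mean-zero normalization (to ensure $\ProL f$ still has zero mean, which is automatic since $\widehat{\ProL f}(0) = m(0)\hat f(0)$ is handled by the convention that $f \in \Hp$ together with $\fint f = 0$, or simply that the zeroth Fourier mode is irrelevant to the maximal function after modding out constants), and carefully justifying that "Calder\'on--Zygmund operators are bounded on $\Hp(\R^n)$" transfers to $\T^n$ — but given the identity \eqref{eq:convolutionidentity} and the already-developed atomic machinery, this is a matter of repeating the argument of Proposition \ref{prop:atomicbound} with $\Psi_\zeta$ replaced by $K^{\text{ext}}_\zeta$ (a truncated/mollified kernel) and taking the appropriate limit. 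No genuinely new idea beyond the atomic estimates already in the appendix should be needed.
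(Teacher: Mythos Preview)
Your approach---reducing to atoms and then splitting into near and far parts with kernel-Taylor estimates---is essentially the route the paper takes in its \emph{alternative} Proposition~\ref{prop:singularoperatorsHp}, but not the one it uses for Proposition~\ref{prop:PboundedHp} itself. For Proposition~\ref{prop:PboundedHp} the paper instead employs a transference argument (adapted from Fan~2001): one multiplies the periodic extension $a^{\text{ext}}$ by a large smooth cutoff $\Xi^{1/N}$, applies the Leray projector on $\R^n$ (where the $\Hp(\R^n)$-boundedness is already known), and shows that the error $\Xi^{1/N}\cdot(\Psi_\zeta*\ProL a)^{\text{ext}} - \Psi_\zeta*\ProL_{\R^n}(\Xi^{1/N}a^{\text{ext}})$ vanishes as $N\to\infty$. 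A key trick is to take atoms with vanishing moments up to order $n(p^{-1}+1)$ (Remark~\ref{rem:highercancellationatoms}) so that the polynomial cutoff $\Xi^{1/N}$ times each lattice translate of $a$ remains an $\Hp(\R^n)$-atom. This transference avoids verifying kernel bounds on $\T^n$ directly.

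Your route is viable, but your execution of the near part has a gap: the function $(\ProL a)\mathbf{1}_{\bar B_a^{**}}$ is \emph{not} a bounded multiple of an $\Hp$-atom, because it fails the moment-cancellation condition (and you have only $L^2$, not $L^\infty$, control on it). The correct version---which is exactly what the paper does in Proposition~\ref{prop:singularoperatorsHp}---is to split the \emph{domain of integration} of the maximal function rather than the function itself: write $\norm{\ProL a}_{\Hp}^p=\int_{\bar B_a^{*}}(m_\Psi(\ProL a))^p+\int_{D\setminus\bar B_a^{*}}(m_\Psi(\ProL a))^p$, bound the first integral via H\"older and the $L^2$-boundedness of $m_\Psi$ and $\ProL$, and for the second integral apply your far-part Taylor-kernel estimate to the smoothed kernel $K_\zeta=\Psi_\zeta*K$ (this is precisely where Lemma~\ref{lem:kernelbounds} enters, supplying uniform-in-$\zeta$ bounds). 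With that fix your argument coincides with the paper's complementary Proposition~\ref{prop:singularoperatorsHp}, modulo verifying that the periodic Leray kernel satisfies the hypotheses $|\hat K(\xi)|\lesssim 1$ and $|\nabla^\beta K(x)|\lesssim |x|^{-n-|\beta|}$ on $[-\tfrac12,\tfrac12]^n$; the second of these is the standard but not entirely trivial fact that the periodic Calder\'on--Zygmund kernel inherits the local singularity of its $\R^n$ counterpart.
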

\bpf
The case $1<p<\infty$ is a classic result. We prove the case $0<p\leq 1.$ Since $\ProL$ is a linear operator, it is enough to prove that the Leray projector of an atom is bounded in $\Hp$.

We first prove the case where $a$ is an atom supported on a big ball. Then, there exists $C$ independent of $a$ such that $a\leq C$, which implies

\begin{equation*}
\begin{aligned}
\norm{\ProL a}_{\Hp}=&\left(\int_{\T^n} (m_\psi \ProL a)^p\right)^\frac{1}{p}
\leq \left(\int_{\T^n} (m_\psi \ProL a)^2\right)^{\frac{1}{2}}
\lesssim \left(\int_{\T^n} (\ProL a)^2\right)^{\frac{1}{2}}
\lesssim \left(\int_{\T^n} a^2\right)^{\frac{1}{2}} \\
&\leq  \left(\int_{\T^n} a^p C^{2-p}\right)^{\frac{1}{2}} \leq C^{\frac{2-p}{2}}\left(\int_{\T^n} (m_\psi a)^p \right)^{\frac{1}{2}} \lesssim \norm{a}_{\Hp}^{\frac{p}{2}}\lesssim 1.
\end{aligned}
\end{equation*}

We deal now with the case where $a$ is an atom supported on a small ball. We adapt the proof of Theorem 5, case II of \cite{Fan2001}. Using Remark \ref{rem:highercancellationatoms}, we can assume that $a$ satisfies property 2. in Definition \ref{def:atoms} for any multiindex $|\beta|\leq n(p^{-1}+1)$. Besides, we can assume without loss of generality, that $a\in C^\infty(\T^2)$, and that its representative ball  $\bar{B}_a$  according to Definition \ref{def:Rnball} is centered at the origin. We denote the Fourier expansion of $a$ by
$$a=-\sum_{k\in \Z^n}\alpha_k e^{2\pi i k\cdot x}. $$
Thus, we have 
$$\ProL a= -\sum_{k\in \Z^n\setminus 0}\left(I-\frac{k\otimes k}{|k|^2}\right)\alpha_k e^{2\pi i k\cdot x}, $$
where we have used that $\int a=0$.

We let
$$\Xi(x)\coloneqq \prod_{j=1}^n(1-4x_j^2)_+,$$
where $2x_+=x+|x|$.
We denote $\Xi^\frac{1}{N}(x)=\Xi(x/N).$ We denote by $\ProL_{\R^n}$ the Leray projector on $\R^n$. We let

$$\mathcal{E}_N(x)= \Xi^\frac{1}{N}(x)(\Psi_\zeta*\ProL a)^\text{ext}(x)-\Psi_\zeta*\ProL_{\R^n}(\Xi^{\frac{1}{N}}a^\text{ext})(x).$$
By checking the Fourier transform, we see that
\begin{align*}
\mathcal{E}^t_N(x)=\sum_{k\in \Z^n\setminus 0}\alpha^t_ke^{2\pi i k\cdot x}\int_{\R^n} \hat{\Xi}(\xi)&\left[\hat{\Psi}(\zeta (k+\frac{\xi}{N}))\left( I-\frac{(k+\frac{\xi}{N})\otimes (k+\frac{\xi}{N})}{|k+\frac{\xi}{N}|^2}\right)\right. \\
& - \left.
\hat{\Psi}(\zeta k)\left(I-\frac{k\otimes k}{|k|^2}\right)
\right]^te^{\frac{2\pi i \xi\cdot x}{N} }d\xi,    
\end{align*}
where $v^t$ is the transpose of the vector or matrix $v$.
We fix $R>0$. Since $\alpha_k$ decays rapidly, and the terms in $[\cdot ]$ are bounded and converging to $0$ if $N\to \infty$, 
\begin{equation}\label{eq:EpsNto0}
\mathcal{E}_N(x)\to 0 \quad \text{uniformly for $x\in \R^n, \zeta\in (0,R]$ as $N\to \infty.$}
\end{equation}

For large $N$, we have 
\begin{align*}
\|\sup_{0<\zeta\leq R} |\Psi_\zeta*\ProL a|\|^p_{L^p(\T^n)}\lesssim  &  N^{-n}\int_{[-N/2,N/2]^n}\sup_{0<\zeta\leq R} |  \Xi^\frac{1}{N} (\Psi_\zeta*\ProL a(x))^\text{ext}|^pdx \\
\lesssim &   N^{-n}\int_{\R^n}\sup_{0<\zeta\leq R} | \Psi_\zeta*\ProL_{\R^n}  (\Xi^\frac{1}{N}  a^\text{ext})(x)|^pdx + o(1),
\end{align*}
where we have used the property \eqref{eq:EpsNto0}. 
Using that $\norm{\ProL_{\R^n} f}_{\Hp(\R^n)}\lesssim \norm{f}_{\Hp(\R^n)}$, we obtain

\begin{align}\label{eq:LerProbound0}
\|\sup_{0<\zeta\leq R} |\Psi_\zeta*\ProL a|\|^p_{L^p(\T^n)}\lesssim  N^{-n}\|\Xi^\frac{1}{N}  a^\text{ext}\|_{\Hp(\R^n)}^p + o(1).
\end{align}

By definition, we have 
\begin{equation}\label{eq:LerProbound1}
\begin{aligned}
\|\Xi^\frac{1}{N}  a^\text{ext}\|_{\Hp(\R^n)}^p = & \int_{\R^n} \sup_{0<\zeta<\infty}\left|\int_{\R^n}\prod_{j=1}^n(1-4x_j^2/N^2)_{+}a^\text{ext}(x)\Psi_\zeta(y-x)dx \right|^pdy \\
= & \int_{\R^n} \sup_{0<\zeta<\infty}\left|\int_{|x_j|<N/2}\prod_{j=1}^n(1-4x_j^2/N^2)_{+}a^\text{ext}(x)\Psi_\zeta(y-x)dx \right|^pdy.    
\end{aligned}
\end{equation}

We choose $N=2m+1$, $m\in \N$. Then, up to a set of measure $0$, the set $\{|x_j|<N/2\}$ is the union of the disjoint sets $Q_k=\{(-1/2,1/2)^n+k\},$ where $k\in \Z^n, |k_j|\leq m$. We can bound \eqref{eq:LerProbound1} by
\begin{equation}\label{eq:LerProbound2}
 \|\Xi^\frac{1}{N}  a^\text{ext}\|_{\Hp(\R^n)}^p \lesssim \sum_{|k_j|\leq m} \int_{\R^n} \sup_{0<\zeta<\infty}\left|\int_{Q_k}\prod_{j=1}^n(1-4x_j^2/N^2)_{+}a^\text{ext}(x)\Psi_\zeta(y-x)dx \right|^pdy.   
\end{equation}

On $Q_k$, $\prod_{j=1}^n(1-4x_j^2/N^2)_{+}$ is a polynomial of degree $2n$ which is bounded by $1$. Therefore, $\prod_{j=1}^n(1-4x_j^2/N^2)_{+}a^\text{ext}(x)\mathbbm{1}_{Q_k}(x)$ is an $\Hp(\R^n)$-atom, so 
$$\int_{\R^n} \sup_{0<\zeta<\infty}\left|\int_{Q_k}\prod_{j=1}^n(1-4x_j^2/N^2)_{+}a^\text{ext}(x)\Psi_\zeta(y-x)dx \right|^pdy\lesssim 1$$
for any $k\in \Z^n, |k_j|\leq m$. We use \eqref{eq:LerProbound2} to bound
$$\|\Xi^\frac{1}{N}  a^\text{ext}\|_{\Hp(\R^n)}^p\lesssim N^n,$$
showing that 
$\liminf_{N\to \infty} N^{-n}\|\Xi^\frac{1}{N}  a^\text{ext}\|_{\Hp(\R^n)}^p\lesssim 1$. We use \eqref{eq:LerProbound0} to bound
$$\|\sup_{0<\zeta\leq R} |\Psi_\zeta*\ProL a|\|^p_{L^p(\T^n)}\lesssim  1,$$
uniformly on $R$. We conclude 
$$\norm{\ProL a}_{\Hp}^p=\|\sup_{0<\zeta<\infty} |\Psi_\zeta*\ProL a|\|^p_{L^p(\T^n)}\lesssim  1.$$
\epf

\begin{rem}
Proposition \ref{prop:PboundedHp} is the only result from this subsection that we need for the main part. From its proof, one can also deduce the boundedness in $\Hp$ of other singular operators that are bounded on $\Hp(\R^n)$, provided their Fourier symbols are well controlled.

We now prove a complementary result, which may be of independent interest: it shows that the boundedness of singular operators can also be established by controlling their singularity in the physical variables.
\end{rem}

\begin{lemma}\label{lem:kernelbounds}
Let $\Psi\in\fancyS(\R^n)$, $M\in\N$ and $K$ be a periodic function satisfying
\begin{equation}\label{eq:singularkernelassumptions}
\begin{aligned}
 |\hat{K}(\xi)|\lesssim& 1,\quad &&\forall \xi\in\Z^n,\\
|\na^\beta K(x)|\lesssim& |x|^{-n-|\beta|}, \quad &&\forall |\beta|\leq M, \quad x\in \left[-\frac{1}{2},\frac{1}{2}\right]^n.   
\end{aligned}
\end{equation}

Then, the kernel
$$K_\zeta=\Psi_\zeta*K$$
satisfies estimates \eqref{eq:singularkernelassumptions}, with bounds that depend on $\Psi$ but can be chosen independent of $\zeta$.
\end{lemma}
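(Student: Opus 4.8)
The plan is to verify the two estimates composing \eqref{eq:singularkernelassumptions} for $K_\zeta=\Psi_\zeta*K$ separately, with all constants independent of $\zeta$; throughout recall that, by the paper's convention, $\Psi_\zeta*K=\Psi_\zeta*_{\R^n}K^{\mathrm{ext}}$.

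The Fourier-side bound is essentially free. By the convolution identity \eqref{eq:convolutionidentity}, applied with the Schwartz function $\Psi_\zeta$ in place of $f$ and the periodic $K$ in place of $g$, the periodic Fourier coefficients of $K_\zeta$ are $\widehat{K_\zeta}(\xi)=\widehat{\Psi_\zeta}(\xi)\,\hat K(\xi)=\hat\Psi(\zeta\xi)\,\hat K(\xi)$ for $\xi\in\Z^n$. Since $|\hat\Psi(\eta)|\le\norm{\Psi}_{L^1(\R^n)}$ for every $\eta$ while $|\hat K(\xi)|\lesssim1$ by hypothesis, this yields $|\widehat{K_\zeta}(\xi)|\lesssim1$ uniformly in $\zeta$, with a bound depending only on $\Psi$. (Incidentally, $|\hat K(\xi)|\lesssim1$ forces $K\in H^{-s}(\T^n)$ for $s>n/2$, so $K_\zeta$ is a bona fide smooth periodic function, not merely a distribution.)

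For the pointwise derivative estimate I would first dispose of $|x|>\tfrac14$: there $|x|^{-n-|\beta|}\gtrsim1$, and $\norm{\nabla^\beta K_\zeta}_{L^\infty}$ is bounded uniformly in $\zeta$ — for $\zeta\gtrsim1$ from the crude Fourier-side bound $\norm{\nabla^\beta K_\zeta}_{L^\infty}\le\sum_\xi(2\pi|\xi|)^{|\beta|}|\hat\Psi(\zeta\xi)||\hat K(\xi)|\lesssim1$, and for small $\zeta$ because, once $x$ lies away from the lattice, $K_\zeta(x)$ coincides with $K(x)$ up to a rapidly decaying error. So fix $x\in[-\tfrac12,\tfrac12]^n$ with $r:=|x|\le\tfrac14$; the key geometric point is that $0$ is the only lattice point within distance $\tfrac12$ of $x$, whence $\operatorname{dist}(z,\Z^n)$ equals the Euclidean norm of the representative of $z$ in $[-\tfrac12,\tfrac12]^n$, and therefore $|\nabla^\beta K^{\mathrm{ext}}(z)|\lesssim\operatorname{dist}(z,\Z^n)^{-n-|\beta|}$ for $|\beta|\le M$ everywhere. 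Now split on $\zeta$ versus $r$. If $\zeta\ge r$, the same crude bound gives $|\nabla^\beta K_\zeta(x)|\lesssim\zeta^{-n-|\beta|}\lesssim r^{-n-|\beta|}$ (summing the Schwartz tail of $\eta\mapsto\hat\Psi(\zeta\eta)$ produces exactly $\zeta^{-n-|\beta|}$). If $\zeta<r$, write $\nabla^\beta K_\zeta(x)=\int_{\R^n}\nabla^\beta\Psi_\zeta(y)\,K^{\mathrm{ext}}(x-y)\,dy$ and split the integral at $|y|=r/2$. On $|y|\le r/2$ the argument $x-y$ stays near the single singularity $0$, at distance comparable to $r$, so $y\mapsto K^{\mathrm{ext}}(x-y)$ is $C^M$ there with $j$-th derivatives $\lesssim r^{-n-j}$; subtracting its degree-$(|\beta|-1)$ Taylor polynomial at $y=0$ (no subtraction if $\beta=0$) is legitimate because $\nabla^\beta\Psi_\zeta$ has vanishing moments up to order $|\beta|-1$, and then the Taylor remainder contributes $\lesssim r^{-n-|\beta|}\int|u|^{|\beta|}|\nabla^\beta\Psi(u)|\,du$ — the powers of $\zeta$ cancelling — while the polynomial part, paired against $\int_{|y|>r/2}\nabla^\beta\Psi_\zeta$, is controlled by the rapidly decaying tail $\int_{|u|>r/(2\zeta)}$ and is again $\lesssim r^{-n-|\beta|}$. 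On $|y|>r/2$ the rapid decay of $\nabla^\beta\Psi$ supplies a gain $\lesssim(r/\zeta)^{-L}$ beating every power of $\zeta$; the only delicate contributions come from small balls around the singular points $y\in x+\Z^n$ contained in that region, where $K^{\mathrm{ext}}$ is non-integrable, but there one pairs the order-$0$ cancellation $|\int_{|z|<\rho}K^{\mathrm{ext}}(z)\,dz|\lesssim1+|\log\rho|$ (encoded in $|\hat K|\lesssim1$) with the tiny local value of $\nabla^\beta\Psi_\zeta$. Collecting, $|\nabla^\beta K_\zeta(x)|\lesssim|x|^{-n-|\beta|}$ uniformly in $\zeta$, with constant depending on $\Psi$, $M$, $n$.

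The main obstacle is not any single inequality but the bookkeeping that keeps everything uniform in $\zeta$ in the regime $\zeta<r$: one must match the number of Taylor terms — equivalently the vanishing-moment order $|\beta|-1$ of $\nabla^\beta\Psi$ — against the smoothness budget $M\ge|\beta|$ of $K$, and one must control the infinitely many singularities produced by periodisation, all but one of which are, for $x\in[-\tfrac12,\tfrac12]^n$, at a fixed positive distance, so that the Schwartz decay of $\Psi$ renders their contribution negligible. The one non-elementary input is the order-$0$ cancellation of $K$; but this is not an extra hypothesis, since it is precisely what the assumption $|\hat K(\xi)|\lesssim1$ (that $K$ is a genuine tempered distribution) provides.
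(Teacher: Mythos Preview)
Your argument and the paper's share the same skeleton --- Fourier side for the multiplier bound, and a split on the relative size of $\zeta$ and $|x|$ for the pointwise bound --- but the execution diverges sharply in the small-$\zeta$ regime. The paper begins its pointwise estimate with ``without loss of generality, $\supp\Psi\subset B(0,1)$'', which collapses the case $\zeta<|x|/2$ to a single line: the convolution $\Psi_\zeta*\nabla^\beta K$ at $x$ sees $\nabla^\beta K$ only on $B(x,\zeta)\subset B(x,|x|/2)$, where $|\nabla^\beta K|\lesssim|x|^{-n-|\beta|}$. For $|x|\lesssim\zeta$, the paper uses the scaling identity $\nabla^\beta\Psi_\zeta=\zeta^{-|\beta|}(\nabla^\beta\Psi)_\zeta$ together with $\zeta^{-|\beta|}\lesssim|x|^{-|\beta|}$ to reduce to $\beta=0$, and then bounds $\sum_\xi|\hat\Psi(\zeta\xi)|$ by comparison with an integral, via a radially decreasing Schwartz majorant. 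You do the $\zeta\ge r$ case directly for all $\beta$ (which is fine, just slightly less economical), but you do not invoke compact support in the $\zeta<r$ case, and this is where your proof becomes substantially harder than necessary.

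Your near-field piece $|y|\le r/2$ is fine: Taylor-expanding $K^{\mathrm{ext}}(x-\cdot)$ and exploiting the vanishing moments of $\nabla^\beta\Psi_\zeta$ is a valid (if roundabout) alternative to simply putting the derivatives on $K$. The genuine weak point is the far-field $|y|>r/2$. There the integrand has non-integrable singularities at every $y\in x+\Z^n$, so the ``integral'' is only defined distributionally, and your appeal to an order-$0$ cancellation $|\int_{|z|<\rho}K^{\mathrm{ext}}(z)\,dz|\lesssim 1+|\log\rho|$ is doing real work that you have not justified: pairing the distribution $K$ with an indicator is not a priori meaningful, and the boundedness of $\hat K$ gives you control only of pairings with \emph{smooth} bumps. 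One can rescue the argument by replacing the sharp cutoff at $|y|=r/2$ with a smooth partition of unity isolating each lattice singularity, then bounding each localized pairing via $|\langle K,\phi\rangle|\le\sum_\xi|\hat K(\xi)||\hat\phi(\xi)|$ and tracking the dependence of Schwartz seminorms of $\phi$ on $\zeta$ and $|x+k|$ --- but this is substantial bookkeeping you have not carried out, and the $|\log\rho|$ claim as stated is not what one actually obtains. The paper's compact-support reduction sidesteps all of this entirely.
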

\bpf
From the identity \eqref{eq:convolutionidentity} and the assumption $|\hat{K}(\xi)|\lesssim 1$, it is enough to bound the Fourier coefficients of $\Psi_\zeta$. By scaling of the Fourier transform, we have 
$$\widehat{\Psi_\zeta}(\xi)=\widehat{\Psi}(\zeta\xi),$$
so the property $|\widehat{K_\zeta}(\xi)|\lesssim 1$ follows from the boundedness of $\widehat{\Psi}$.

Without loss of generality, we assume that $\supp \Psi\subset B(0,1)$. We fix $x\in \left[-\frac{1}{2},\frac{1}{2}\right]^n$, which covers a whole period. Finally, we fix a multiindex $\beta$ with $|\beta|\leq M$. 

For $0<\zeta<\frac{|x|}{2}$, we bound
$$|\Psi_\zeta*\na^\beta K(x)|\leq \norm{\na^\beta K}_{L^\infty(B(x,\frac{|x|}{2}))}\lesssim |x|^{-n-|\beta|}.$$
For $|x|\lesssim \zeta$, we first observe 
$$\na^\beta\Psi_\zeta*K(x)=\zeta^{-|\beta|}(\na^\beta\Psi)_\zeta*K(x),$$
and $\zeta^{-\beta}\lesssim |x|^{-\beta}$ by assumption, so it is enough to prove the bound for $\beta=0$.
By \eqref{eq:convolutionidentity}, we have
$$\Psi_\zeta*K(x)=\sum_{\xi\in\Z^n}e^{i2\pi\xi x}\widehat{\Psi_\zeta}(\xi)\hat{K}({\xi}),$$
so 
\begin{equation}\label{eq:fouriertestbound1}
|\Psi_\zeta*K(x)|\lesssim \sum_{\xi\in\Z^n}|\widehat{\Psi_\zeta}(\xi)|=\sum_{\xi\in\Z^n}|\widehat{\Psi}(\zeta\xi)|.
\end{equation}

Since $\Psi\in\fancyS(\R^n)$, there is a function $g\in \fancyS(\R^n)$ with radial symmetry and strictly decreasing such that
$$|\widehat{\Psi}(\xi)|\leq g(\xi),$$
giving 

\begin{equation}\label{eq:fouriertestbound2}
\sum_{\xi\in\Z^n}|\widehat{\Psi}(\zeta\xi)|\leq \sum_{\xi\in\Z^n} g(\zeta\xi) \lesssim g(0)+\int_{\R^n}g(\zeta\xi)d\xi= g(0)+\zeta^{-n}\|g \|_{L^1}\lesssim |x|^{-n}.
\end{equation}
The use of the auxiliary function $g$ is not strictly necessary, but its properties significantly simplify the proof of inequality $\sum_{\xi\in\Z^n} g(\zeta\xi) \lesssim g(0)+\int_{\R^n}g(\zeta\xi)d\xi$.
\epf

\begin{prop}\label{prop:singularoperatorsHp}
Let $0<p<1$ and $K$ be a periodic function satisfying
\begin{align*}
|\hat{K}(\xi)|\lesssim& 1,\quad &&\forall \xi\in\Z^n,\\
|\na^\beta K(x)|\lesssim& |x|^{-n-|\beta|}, \quad &&\forall |\beta|\leq \lfloor n(\frac{1}{p}-1)\rfloor+1, \quad x\in \left[-\frac{1}{2},\frac{1}{2}\right]^n.
\end{align*}
Then, it holds
$$\norm{K*_{\T^n} f}_{\Hp}\lesssim \norm{f}_{\Hp}.$$
\end{prop}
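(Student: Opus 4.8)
The plan is the classical atomic strategy. By the atomic decomposition of Proposition \ref{prop:atomicdecomposition}, linearity, and the continuity of $g\mapsto K*_{\T^n}g$ on $\fancyD'(\T^n)$, it suffices to establish a uniform bound $\norm{K*_{\T^n}a}_{\Hp}\le C$ for every $\Hp$-atom $a$: indeed $K*_{\T^n}f=\sum_k\lambda_k\,K*_{\T^n}a_k$ holds in $\fancyD'$, and then the argument of Proposition \ref{prop:sumatomsinHp}, applied to the uniformly $\Hp$-bounded family $\{C^{-1}K*_{\T^n}a_k\}$ in place of atoms, gives $\norm{K*_{\T^n}f}_{\Hp}^p\lesssim\sum_k|\lambda_k|^p\lesssim\norm{f}_{\Hp}^p$. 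Since $K*_{\T^n}$ commutes with translations and the maximal function $m_\Psi$ is translation invariant, I may assume the atom under consideration has representative ball $\bar B_a$ centred at the origin, with radius $r$. The crucial observation is that, writing $K_\zeta=\Psi_\zeta*K$, one has $m_\Psi(K*_{\T^n}a)(x)=\sup_{\zeta>0}|K_\zeta*_{\T^n}a(x)|$, and by Lemma \ref{lem:kernelbounds} the kernels $K_\zeta$ satisfy the Calder\'on--Zygmund bounds \eqref{eq:singularkernelassumptions} (with $M=\lfloor n(\tfrac1p-1)\rfloor+1$) uniformly in $\zeta$. I would then split $\T^n=\bar B_a^{*}\cup(\T^n\setminus\bar B_a^{*})$, where $\bar B_a^{*}$ has the same centre as $\bar B_a$ and radius a fixed multiple of $r$.

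On the near region $\bar B_a^{*}$: the hypothesis $|\hat K(\xi)|\lesssim1$ makes $K*_{\T^n}$ bounded on $L^2(\T^n)$, and $m_\Psi$ is $L^2$-bounded by the identification $\mathcal{H}^2\equiv L^2$; hence, using $\norm{a}_{L^2}\le\norm{a}_{L^\infty}|\bar B_a|^{1/2}\le|\bar B_a|^{\frac12-\frac1p}$ from Definition \ref{def:atoms} together with H\"older's inequality on $\bar B_a^{*}$,
\[
\int_{\bar B_a^{*}}\bigl(m_\Psi(K*_{\T^n}a)\bigr)^p\le\norm{m_\Psi(K*_{\T^n}a)}_{L^2}^p\,|\bar B_a^{*}|^{1-\frac p2}\lesssim\norm{a}_{L^2}^p\,|\bar B_a|^{1-\frac p2}\lesssim1.
\]
(For a big-ball atom this already finishes the proof, since then $\norm{a}_{L^\infty}\lesssim1$, so $\norm{a}_{L^2}\lesssim1$ and no cancellation is needed.)

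On the far region I would use the moment cancellation of $a$, i.e.\ property \ref{itematomscancellations} of Definition \ref{def:atoms}. For $x\notin\bar B_a^{*}$ and $y\in\bar B_a$, the torus distance from $x-y$ to $\Z^n$ is comparable to $d(x,0)$, so the set $\{x-y:y\in\bar B_a\}$ does not wrap around a fundamental cell and I may Taylor-expand $y\mapsto K_\zeta(x-y)$ to order $d=\lfloor n(\tfrac1p-1)\rfloor$ about the origin; subtracting the Taylor polynomial (allowed by the vanishing moments) and using $|\na^\beta K_\zeta(z)|\lesssim d(z,\Z^n)^{-n-|\beta|}$ for $|\beta|\le d+1$ yields
\[
\sup_{\zeta>0}|K_\zeta*_{\T^n}a(x)|\lesssim\norm{a}_{L^\infty}\,|\bar B_a|\,\frac{r^{\,d+1}}{d(x,0)^{\,n+d+1}}\lesssim\frac{r^{\,n(1-\frac1p)+d+1}}{d(x,0)^{\,n+d+1}}.
\]
Raising to the $p$-th power, integrating in polar coordinates over $d(x,0)\ge 2r$, and using that $p(n+d+1)>n$ (because $d+1>n(\tfrac1p-1)$), so the integral is dominated by its lower endpoint and equals $\lesssim r^{\,n-p(n+d+1)}$, the powers of $r$ cancel exactly and one gets $\int_{\T^n\setminus\bar B_a^{*}}\bigl(m_\Psi(K*_{\T^n}a)\bigr)^p\lesssim1$. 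Together with the near estimate, this gives $\norm{K*_{\T^n}a}_{\Hp}\le C$ uniformly over atoms, and the reduction in the first paragraph concludes the proof.

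The main obstacle is the bookkeeping of periodicity in the far-field estimate: one must verify that for $x$ outside $\bar B_a^{*}$ the segments occurring in the Taylor remainder stay within a single fundamental cell, so that the pointwise kernel bounds stated on $[-\tfrac12,\tfrac12]^n$ and the Taylor estimate apply with $d(x,0)$ replacing $|x|$. This is precisely where the smallness of $\bar B_a$ (Definition \ref{def:smallball}) and the uniform-in-$\zeta$ control from Lemma \ref{lem:kernelbounds} are used; everything else is the routine $\Hp$-atom computation.
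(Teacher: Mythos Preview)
Your proposal is correct and follows essentially the same route as the paper's proof: reduction to atoms, the near/far split $\bar B_a^*\cup(\T^n\setminus\bar B_a^*)$, $L^2$-boundedness plus H\"older on the near part, and Taylor expansion of $K_\zeta=\Psi_\zeta*K$ against the vanishing moments on the far part via Lemma \ref{lem:kernelbounds}. The only cosmetic difference is that you normalise the atom to be centred at the origin and phrase the kernel bound as $|\na^\beta K_\zeta(z)|\lesssim d(z,\Z^n)^{-n-|\beta|}$, whereas the paper keeps the centre at $\bar x$ and works in the shifted fundamental cell $D=[-\tfrac12,\tfrac12]^n+\bar x$; both are equivalent.
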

\bpf
By Proposition \ref{prop:atomicdecomposition}, it is enough to show that $\norm{K*_{\T^n} a}_{\Hp}\lesssim \norm{a}_{\Hp}$ for an atom $a$, with a constant independent of the atom. 
We recall that both $m_\Psi$ and $K*_{\T^n}$ are bounded operators from $L^2$ into itself.

If $a$ is an atom supported on a big ball, we use Hölder inequality to bound
$$\int (m_\Psi(K*_{\T^n} a))^p\leq \Big(\int (m_\Psi (K*_{\T^n}a))^2\Big)^\frac{2}{p}\Big(\int 1\Big)^\frac{2-p}{2}\lesssim \Big(\int  a^2\Big)^\frac{2}{p}\Big(\int 1\Big)^\frac{2-p}{2}\lesssim 1,$$
since atoms supported on big balls are bounded from above, and the domain has finite size. 

We now assume that the atom $a$ has a small ball $\bar{B}_a\subset \R^n$ as its representative ball according to Definition \ref{def:Rnball}. We denote by $\bar{x}$ the center of $\bar{B}_a$. As we did in the proof of Proposition \ref{prop:atomicbound}, we will work in the domain $D=[-1/2,1/2]^n+\bar{x}\subset \R^n$ (see Figure \ref{figureatoms}). We also denote by $\bar{B}^*_a$ the ball with the same center as $\bar{B}_a$ and twice its radius. 

We split $D=\bar{B}^*_a\cup D\setminus \bar{B}^*_a$. In $\bar{B}^*_a$, we proceed as before to bound 

\begin{equation}\label{eq:SIOest1}
\begin{aligned}
\int_{\bar{B}^*_a} (m_\Psi(K*_{\T^n} a))^p&\leq \Big(\int_{\bar{B}^*_a} (m_\Psi (K*_{\T^n} a))^2\Big)^\frac{p}{2}\Big(\int_{\bar{B}^*_a} 1\Big)^\frac{2-p}{2}\\
&\lesssim \Big(\int_{\bar{B}^*_a}  a^2\Big)^\frac{p}{2}\Big(\int_{\bar{B}^*_a} 1\Big)^\frac{2-p}{2}\leq (|\bar{B}^*_a||\bar{B}_a|^{-\frac{2}{p}})^\frac{p}{2}|\bar{B}^*_a|^\frac{2-p}{2}\lesssim 1.
\end{aligned}
\end{equation}

For any $x\in D\setminus \bar{B}^*_a$, we have
\begin{align*}
K*_{\T^n} a*\Psi_\zeta(x)=&\int_{\R^n}\Psi_\zeta(x-y)\Big(\int_{\bar{B}_a}K(y-z)a(z)dz\Big)dy\\
=&\int_{\bar{B}_a}a(z)\Big(\int_{\R^n}\Psi_\zeta(x-y)K(y-z)dy\Big)dz
=\int_{\bar{B}_a}a(z)K_{\zeta}(x-z)dz,
\end{align*}
where $K_{\zeta}(x)=\int_{\R^n}\Psi_\zeta(x-y)K(y)dy$.

We denote $P_{x,\zeta}$ the degree $d=\lfloor n(\frac{1}{p}-1)\rfloor$ Taylor polynomial of the function $z\mapsto K_{\zeta}(x-z)$ expanded at $\bar{x}$. Because of property \ref{itematomscancellations}. in Definition \ref{def:atoms},we have
\begin{align*}
K*_{\T^n}  a*\Psi_\zeta(x)= &\int_{{\overline{B}_a}}a(z)(K_{\zeta}(x-z)-P_{x,\zeta}(z))dz.
\end{align*}
Since $z\in \bar{B}_a$ and $x\in D\setminus \bar{B}^*_a$, by properties of the Taylor expansion and Lemma \ref{lem:kernelbounds}, we have 
$$|K_{\zeta}(x-z)-P_{x,\zeta}(z)|\lesssim \frac{|z-\bar{x}|^{d+1}}{|x-\bar{x}|^{d+1+n}}.$$
Denoting as $r$ the radius of $\bar{B}_a$ and using $|a|\lesssim r^{-\frac{n}{p}}$, we have

\begin{equation*}
\begin{aligned}
|K*_{\T^n}  a*\Psi_\zeta(x)|\lesssim & \int_{{\overline{B}_a}}r^{-\frac{n}{p}} \frac{|z-\bar{x}|^{d+1}}{|x-\bar{x}|^{d+1+n}} dz 
\lesssim \int_{{\overline{B}_a}}r^{-\frac{n}{p}} \frac{r^{d+1}}{|x|^{d+1+n}} dz \lesssim  \frac{r^{-\frac{n}{p}+d+1+n}}{|x|^{d+1+n}}
\end{aligned}
\end{equation*}
uniformly on $\zeta$, and therefore

\begin{equation}\label{eq:SIOest2}
\begin{aligned}
\int_{D\setminus \bar{B}^*_a} (m_\Psi(K*_{\T^n}  a))^p\lesssim r^{p(-\frac{n}{p}+d+1+n)} \int_r^{\sqrt{n}} \frac{|x|^{n-1}}{|x|^{p(d+1+n)}} d|x|\lesssim r^{p(-\frac{n}{p}+d+1+n)} \left.\frac{r^{n}}{r^{p(d+1+n)}}\right.
\lesssim 1,
\end{aligned}
\end{equation}
where we used that $n-p(d+1+n)<0$.
Combining \eqref{eq:SIOest1} and \eqref{eq:SIOest2}, we conclude the case for atoms supported on small balls.
\epf

\bibliographystyle{abbrv}
\bibliography{NonuniquenessHardy}
\end{document}